\newcommand*{\Log}{\mathop{\operator@font Log}\nolimits}
\newcommand*{\Arg}{\mathop{\operator@font Arg}\nolimits}
\newcommand*{\tg}{\mathop{\operator@font tg}\nolimits}
\newcommand*{\ctg}{\mathop{\operator@font ctg}\nolimits}
\newcommand*{\cosec}{\mathop{\operator@font cosec}\nolimits}
\newcommand*{\arctg}{\mathop{\operator@font arctg}\nolimits}
\newcommand*{\arcctg}{\mathop{\operator@font arcctg}\nolimits}
\newcommand*{\sh}{\mathop{\operator@font sh}\nolimits}
\newcommand*{\ch}{\mathop{\operator@font ch}\nolimits}
\newcommand{\REM}[1]{\relax}
\numberwithin{equation}{section}
\newcommand{\Hol}{{\sf Hol}}
\newcommand{\UH}{\mathbb{H}}
\newcommand{\Real}{\mathbb{R}}
\newcommand{\Complex}{\mathbb{C}}
\newcommand{\ComplexE}{\overline{\mathbb{C}}}
\newcommand{\UD}{\mathbb{D}}
\newcommand{\clS}{\mathcal{S}}
\newcommand{\Sgk}{\Sigma(k)}
\newcommand{\onto}
{\xrightarrow{\scriptstyle \!\mathsf{onto}\,}}
\newcommand{\into}
{\xrightarrow{\hbox{\lower.2ex\hbox{$\scriptstyle \smash{\mathsf{into}}$}}\,}}
\let\C=\Complex
\newcommand{\STOP}{\par\hbox to\textwidth{\color{red}\leaders\hbox{\,STOP\,}\hfil}\par}
\newcommand{\mcite}[1]{\csname b@#1\endcsname}
\newcommand{\diam}{\mathop{\mathsf{diam}}\nolimits}
\newtheorem{result}{Theorem}
\def\dist{{\sf dist}}
\def\len{{\sf length}}
\def\Re{\mathop{\mathtt{Re}}}
\def\Im{\mathop{\mathtt{Im}}}
\newcommand{\sgn}{\mathop{\mathrm{sgn}}}
\newtheorem{theorem}{Theorem}
\newtheorem{proposition}{Proposition}[section]
\newtheorem{corollary}[proposition]{Corollary}
\newtheorem{lemma}[proposition]{Lemma}
\theoremstyle{definition}
\newtheorem{definition}{Definition}
\theoremstyle{remark}
\newtheorem{remark}{Remark}[section]
\numberwithin{proposition}{section}
\numberwithin{equation}{section}
\newcommand{\di}{\mathrm{d}}
\def\mydot#1{\smash{\stackrel{\,\lower.12ex\hbox{\text{\LARGE.}}}{#1}}\vphantom{\raise.2ex\hbox{$#1$}}}
\DeclareMathOperator*{\esssup}{ess\,sup}
\newcommand{\SHOWCORRECTIONS}{%
\newcommand{\nv}[1]{{\color{green!60!black}##1}}%
\newcommand{\comm}[1]{{\color[rgb]{0.5,0,0.5}##1}}%
\newcommand{\dv}[1]{{\color[rgb]{0.65,0.74,0.79}\sout{##1}}}%
\newcommand{\IN}[1]{{\color[rgb]{1.00,0.33,0.33}##1}}
\newcommand{\ID}[1]{{\color[rgb]{0.65,0.55,0.62}\sout{##1}}}%
\newcommand{\IC}[1]{{\color[rgb]{0.00,0.00,1}##1}}%
\newcommand{\RD}[1]{\textcolor{red}{##1}}%
}%
\newcommand{\HIDECORRECTIONS}{%
\newcommand{\nv}[1]{##1}
\newcommand{\dv}[1]{\relax}%
\newcommand{\comm}[1]{\relax}%
\let\IN=\nv%
\let\ID=\dv%
\let\IC=\comm%
\newcommand{\RD}[1]{##1}%
}%
        \let\bibciteOLD=\bibcite
        \renewcommand{\bibcite}[2]{\bibciteOLD{#1}{\textsc{#2}}}
        \renewcommand{\@biblabel}[1]{[\textsc{#1}]\hfill}
\title{On existence of Becker extension}
\author[P. Gumenyuk]{Pavel Gumenyuk}
\address{Department of Mathematics, Politecnico di Milano, via E. Bonardi 9, 20133~Milan, Italy}
\email{pavel.gumenyuk@polimi.it}
\subjclass[2010]{Primary 30C62, Secondary 30C35, 30D05, 30C75}
\keywords{Univalent function, boundary behavior, quasiconformal extension, Loewner chain, Becker extension}
\date{\today}
\let\le=\leqslant
\let\ge=\geqslant
\newcommand{\rc}{\mathsf{R_{c}}}
\begin{document}

\begin{abstract}
A well-known theorem by J.\,Becker states that if a normalized univalent function~$f$ in the unit disk~$\mathbb{D}$ can be embedded as the initial element into a Loewner chain~$(f_t)_{t\geqslant 0}$ such that the Herglotz function $p$ in the Loewner\,--\,Kufarev PDE $$\partial f_t(z)/\partial f=zf'_t(z)p(z,t),\qquad z\in\UD,\text{~a.e.\,}t\ge0,$$ satisfies $\big|(p(z,t)-1)/(p(z,t)+1)\big|\le k<1$, then $f$ admits a $k$-q.c. (~=``$k$-quasiconformal'') extension $F:\mathbb{C}\to\mathbb{C}$. The converse is not true. However, a simple argument shows that if~$f$ has a $q$-q.c. extension with $q\in(0,1/6)$, then  Becker's condition holds with~$k:=6q$. In this paper we address the following problem: find the largest $k_*\in(0,1]$ with the property that for any $q\in(0,k_*)$ there exists $k_0(q)\in(0,1)$ such that every normalized univalent function~$f:\mathbb D\to\mathbb C$ with a $q$-q.c. extension to~$\mathbb C$ satisfies Becker's condition with $k:=k_0(q)$. We prove that $k_*\ge1/3$.
\end{abstract}

\maketitle

\section{Introduction}
Univalent functions admitting a quasiconformal extension is one of the classical topics in Geometric Function Theory closely related to Teichm\"uller Theory, see e.g. \cite{Becker80, Krushkal-handbook, Takhtajan}.
In~1972, J. Becker~\cite{Becker72, Becker76} found a witty construction of q.c.-extensions for holomorphic functions based on Loewner's parameteric method. Although Becker's extensions are quasiconformal mappings of quite special nature~\cite[Theorem~2]{Istvan}, his result is interesting from several points of view. Precise definitions and a brief discussion on this matter can be found in Sect.\,\ref{SS_Becker}.

Taking advantage of Becker's result, the author and I.\,Hotta~\cite{PAMS2020} answered recently a question concerning the sharp bound of the third coefficient raised in~1977 by K\"uhnau and Niske~\cite{KuhnauNiske}. In the same paper~\cite{PAMS2020} the following problem was stated.
\vskip1ex
\noindent\textbf{Problem.} Find the largest $k_*\in(0,1]$ such that there exists a function $k_0:(0,k_*)\to(0,1)$ with the following property: if $q\in(0,k_*)$, then any univalent function $f:\UD\to\Complex$ admitting a $q$-q.c. extension to~$\Complex$ has also a $k_0(q)$-q.c. Becker extension.
\vskip1ex

A simple observation concerning conditions of q.c.-extendibility in terms of the pre-Schwarzian shows that $k_*\ge1/6$, see \cite[Sect.\,5]{PAMS2020}. An analogous but more involving argument~\cite[Corollary~6.7]{PAMS2020} allows one to achieve a slight improvement of this result.

On the other hand, for $q\in(0,1)$ close to~$1$, it is not known whether every univalent function~$f$ admitting a $q$-q.c. extension to~$\Complex$ has also a $k$-q.c. Becker extension with some $k\in(0,1)$, even if $k$ is allowed to depend on~$f$. In fact, it was shown~\cite[Proposition~5.2]{PAMS2020} that two natural ways to construct a Loewner chain starting from such an~$f\,$ fail to produce Becker q.c.-extensions in general. The main result of this paper is as follows.
\begin{theorem} \label{TH_MAIN}
In the above notation, $k_*\ge1/3$; i.e. for any $q\in(0,1/3)$ there exists $k_0\in(0,1)$ depending only on~$q$ such that every $f\in\clS_q$ admits a $k_0$-q.c. Becker extension.
\end{theorem}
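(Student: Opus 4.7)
The plan is: start from an arbitrary $q$-q.c.\ extension $F\colon\Complex\to\Complex$ of $f\in\clS_q$, build from~$F$ a specific Loewner chain $(f_t)_{t\ge0}$ with $f_0=f$, and control its Herglotz driver~$p$ so that $|(p-1)/(p+1)|\le k_0(q)<1$ for some $k_0(q)$ depending only on~$q$. Becker's theorem will then furnish the desired $k_0(q)$-q.c.\ Becker extension.

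For the construction I would use the family of radial dilations $G_t(z):=F(e^tz)$, $t\ge0$. Each $G_t$ is a $q$-q.c.\ self-homeomorphism of~$\Complex$ with Beltrami coefficient $\mu_{G_t}(z)=\mu_F(e^tz)$ (the scaling factor~$e^t$ being real positive), supported in $\{|z|\ge e^{-t}\}$, because $F$ is holomorphic on~$\UD$. A normalized solution of the Beltrami equation with coefficient $\mu_{G_t}|_{\UD}$ produces a q.c.\ self-map $H_t\colon\UD\to\UD$ with $H_0=\mathrm{id}$, and then $f_t:=G_t\circ H_t^{-1}$ is holomorphic on~$\UD$, equals~$f$ at $t=0$, and satisfies $f_t(\UD)=F(e^t\UD)$, so the chain of image domains is increasing and the family is subordinating. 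After a monotone reparametrization in~$t$ to normalize $f_t'(0)=e^t$, $(f_t)$ is a genuine Loewner chain starting at~$f$.

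Next I would compute the Herglotz function directly. Differentiating the identity $f_t\circ H_t=F(e^t\cdot\,)$ with respect to $t$, $z$, and~$\bar z$, and using $\bar\partial F=\mu_F\,\partial F$, one eliminates the derivatives of~$F$ and obtains $p(z,t)$ as an explicit rational expression in $\mu_F(e^tH_t^{-1}(z))$, $\partial H_t$, $\bar\partial H_t$, and $\partial_tH_t$. Translating this through the Cayley-type map $w\mapsto(w-1)/(w+1)$, the goal is a pointwise bound
$$\left|\frac{p(z,t)-1}{p(z,t)+1}\right|\;\le\;\Phi(q)$$
with $\Phi\colon[0,1/3)\to[0,1)$ continuous and $\Phi(0)=0$. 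Setting $k_0(q):=\Phi(q)$ and applying Becker's theorem then closes the argument.

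The main obstacle is the quantitative estimate above. The pre-Schwarzian route followed in~\cite{PAMS2020} loses a factor of approximately~$6$, yielding only $k_*\ge 1/6$; the threshold~$1/3$ must come from working directly inside the Loewner--Kufarev picture, where a careful cancellation leaves only a factor of about~$3$, so that $\Phi(q)\sim 3q$ near $q=0$ and $\Phi(q)<1$ for all $q<1/3$. A secondary technical issue is the regularity of $t\mapsto H_t$, since the Beltrami coefficient $\mu_{G_t}$ is discontinuous across $\{|z|=e^{-t}\}$; this is handled by the parametric version of the measurable Riemann mapping theorem, which guarantees enough regularity in~$t$ to legitimize the pointwise differentiation used in deriving the Herglotz formula.
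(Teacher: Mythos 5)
There is a genuine gap, and it sits exactly at the step you label as ``the main obstacle.'' The decisive estimate $\bigl|(p(z,t)-1)/(p(z,t)+1)\bigr|\le\Phi(q)<1$ is never argued; it is asserted as the outcome of a ``careful cancellation,'' and for the chain you construct it is not true in general. Your chain is the family of Riemann maps onto the dilated images $F(e^t\UD)$ of an \emph{arbitrary} $q$-q.c.\ extension $F$ of $f\in\clS_q$. After the factorization $f_t=G_t\circ H_t^{-1}$, the maps $H_t$ solve a Beltrami equation with a merely measurable coefficient, so $\partial H_t$, $\bar\partial H_t$, $\partial_t H_t$ exist only in a Sobolev/distributional sense and the ``explicit rational expression'' for $p$ in terms of $\mu_F$ is not available pointwise; more fundamentally, $\Re p(\cdot,t)$ on $\partial\UD$ is governed by the normal velocity of the boundary curves $F(e^t\partial\UD)$ divided by $|f_t'|$ on the boundary, and these curves are in general only quasicircles, for which neither the boundary derivative of $f_t$ nor the velocity admits bounds depending only on $q$. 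The pointwise size of $\mu_F$ simply does not control this ratio: Becker's condition for your chain is equivalent (by the converse result quoted in the paper) to quasiconformality of the associated radial reparametrization of $F$, and the paper explicitly recalls, citing \cite[Proposition~5.2]{PAMS2020}, that the natural ways of manufacturing a Loewner chain from a given q.c.\ extension fail to produce Becker q.c.\ extensions in general. So no function $\Phi(q)<1$ can work for an arbitrary choice of $F$; a specific, well-chosen extension is indispensable.

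You also misplace the source of the threshold $1/3$. It does not arise from a cancellation inside the Loewner--Kufarev computation for a general extension; in the paper it comes from K\"uhnau's bound $(1-|z|^2)^2|S_f(z)|\le 6q$ for $f\in\clS_q$ combined with the Ahlfors--Weill/Becker chain, whose Herglotz function satisfies $(1-p(z,t))/(1+p(z,t))=\tfrac12 z^2(1-e^{-2t})^2S_f(e^{-t}z)$ and hence Becker's condition with $k=3q$; this is precisely where $q<1/3$ enters. The real difficulty --- which your proposal does not address at all --- is that this chain yields a Becker extension not of $f$ but of a M\"obius shift of $f$, because the point omitted by the chain is $-a_2(f)$ rather than $0$. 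The paper's entire technical body (the subharmonicity of $|\Psi^{-1}|^{-a(q)}$, the lower and upper bounds for $|\tilde g_t'|$ on $\partial\Delta$, the curvature and distance lemmas, and the modulus-of-continuity/Hilbert-transform estimates) exists to modify the chain beyond time $t_1$ so that the omitted point becomes $0$ while keeping the Herglotz function in a compact subset of the right half-plane depending only on $q$. Without an argument of comparable quantitative force replacing your asserted bound, the proposal is a plausible-looking plan but not a proof.
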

An optimistic conjecture is that~$k_*=1$. At the same time, it would not be surprising if $k_*$ turned out to be equal to~$1/3$.

Note that the quasiconformal extensions~$F$ of holomorphic functions~$\UD$ considered in the above theorem satisfy $F(\infty)=\infty$. This condition is not merely a normalization. In fact, it turns out to be quite essential, see e.g. Remark~\ref{RM_infinity}. An analogue of the above theorem for quasiconformal extensions of functions $g\in\Sigma$ follows easily from known results, but its does not imply Theorem~\ref{TH_MAIN} (which was unfortunately overlooked in \cite[Sect.\,5]{Istvan}).

The proof of Theorem~\ref{TH_MAIN} is given in Sect.\,\ref{SS_main-proof}. It is preceded by a block of auxiliary statements in Sect.\,\ref{SS-lemmas}\,--\,\ref{SS_dist-diam}. To facilitate the reader, we explain the main idea and give a short overview of the main steps of the proof in Sect.\,\ref{SS_main-construction}.

Necessary details on the Parametric Method and Becker's extensions are given the next section. The Loewner chain we construct in the proof of Theorem~\ref{TH_MAIN} is defined as the family of conformal mappings onto domains bounded by images of concentric circles under a certain diffeomorphic mapping. In Sect.~\ref{S_LCh-diffeos} we combine results from~\cite{OE} and~\cite{Schippers2005} to show that such families are differentiable w.r.t. the parameter and satisfy the Loewner\,--\,Kufarev~PDE.

\section{Preliminaries}
\subsection{Loewner chains and Becker's extensions}\label{SS_Becker}
An important role in the study of univalent functions is played by the Parametric Method, going to back Loewner~\cite{Loewner}.

Let $p:\UD\times[0,+\infty)\to\Complex$ be a (classical) \textsl{Herglotz function}, i.e.
\begin{itemize}
  \item[HF1:] for each~$z\in\UD$, $p(z,\cdot)$ is locally integrable on~$[0,+\infty)$;
  \item[HF2:] for a.e.~$t\ge0$, $p(\cdot,t)$ is holomorphic in~$\UD$ and satisfies $\Re p(\cdot,t)\ge0$;
  \item[HF3:] $\int\limits_0^{+\infty}\Re p(0,t)\,\di t=+\infty$.
\end{itemize}
Usually instead of HF3, one assumes the normalization~$\Re p(0,t)=1$ for a.e.\,${t\ge0}$.  Following Becker~\cite{Becker76}, \cite[Sect.\,5.1]{Becker80}, as a matter of convenience we prefer to work with a less restrictive condition~HF3. With a suitable slight modifications, most of the classical results remain valid, because the normalization can be achieved by a change of variables. In particular, it is known, see e.g. \cite[Theorem~6.3 on p.\,160]{Pommerenke}, that
the initial value problem for the Loewner\,--\,Kufarev ODE
\begin{equation}\label{EQ_LK-ODE0}
{\di w}/{\di t}=-w\,p(w,t),\quad t\ge0,\qquad w(z,0)=z\in\UD,
\end{equation}
has a unique solution $w=w(z,t)$ and that the limit
\begin{equation}\label{EQ_f-limit}
f(z):=\lim_{t\to+\infty}\frac{\,\mathrlap{w(z,t)}\hphantom{w'(0,t)}}{\,w'(0,t)},\quad z\in\UD,
\end{equation}
exists locally uniformly in~$\UD$ and belongs to the class~$\clS$ of all univalent functions $f:\UD\to\Complex$ normalized by~${f(0)=0}$, ${f'(0)=1}$.

Moreover, a much deeper result, see~\cite{Gut} and \cite[Theorem~6.1 on p.\,160]{Pommerenke}, states that for any $f\in\clS$ there exists a Herglotz function~$p$, not necessarily unique, such that $f$ is given by~\eqref{EQ_f-limit}. The proof of this fact is based on the possibility to embed~$f$ as the initial element into a suitable Loewner chain~$(f_t)$.

\begin{definition}
A family $(f_t)_{t\ge0}$ of holomorphic functions is called a (radial) \textsl{Loewner chain} in~$\UD$ if it satisfies the following conditions:
\begin{itemize}
\item[LC1:] for each~$t\ge0$, $f_t:\UD\to\Complex$ is univalent, with $f_t(0)=0$;
\item[LC2:] for any $s,t\ge0$ with $s<t$, $f_s(\UD)\subset f_t(\UD)$;
\item[LC3:] the function $t\mapsto f'_t(0)$ is locally absolutely continuous in~$[0,+\infty)$, $f'_0(0)=1$, and $$\lim_{t\to+\infty}|f_t'(0)|\to+\infty.$$
\end{itemize}
\end{definition}

Every Herglotz function~$p$ generates a Loewner chain~$(f_t)$ by
\begin{equation}\label{EQ_LoewnerChain}
  f_s(z):=\lim_{t\to+\infty}\frac{\,\mathrlap{w(z;s,t)}\hphantom{w'(0;0,t)}}{\,w'(0;0,t)},\quad z\in\UD,~s\ge0,
\end{equation}
where for any $s\ge0$, $t\ge s$, and $z\in\UD$, $w(t)=w(z;s,t)$ stands for the unique solution to
$$
{\di w}/{\di t}=-w\,p(w,t),\quad t\ge s,\qquad w(z;s,s)=z\in\UD.
$$
The Loewner chain $(f_t)$ defined in this way satisfies the Loewner\,--\,Kufarev PDE:
$$
\partial f_t(z)/\partial t=zf_t'(z)p(z,t),\quad z\in\UD,~t\ge0.
$$

Furthermore, the correspondence between the Herglotz functions~$p$ and Loenwer chains defined by~\eqref{EQ_LoewnerChain} is a bijection, i.e. for any Loewner chain~$(f_t)$ there exists a unique Herglotz function~$p$ such that equality~\eqref{EQ_LoewnerChain} holds, see e.g. \cite[\S6.1]{Pommerenke65}.

Thus, the class $\clS$ can be seen as the image of the set of all Herglotz functions in~$\UD$ under the map defined via the Loewner\,--\,Kufarev ODE~\eqref{EQ_LK-ODE0} and formula~\eqref{EQ_f-limit}. It is natural to ask which properties of a Herglotz function~$p$ ensure that the corresponding function~$f=f_0$ belongs to a given subclass of~$\clS$. One important result in this direction was discovered in~1972 by Becker~\cite{Becker72}. To state Becker's theorem, we need the following definition.
\begin{definition}
Let  with $k\in[0,1)$. We say that a univalent (holomorphic or meromorphic) function $f$ defined in a domain $D\subset\ComplexE$ \textsl{admits a $k$-q.c. extension to~$\ComplexE$} if there exists a $k$-q.c. mapping $F:\ComplexE\to\ComplexE$ such that $f=F|_D$. Moreover, a holomorphic function $f:D\to\Complex$, $D\subset\Complex$, is said to \textsl{admits a $k$-q.c. extension to~$\Complex$} if there exists a $k$-q.c. mapping $F:\Complex\to\Complex$ such that $f=F|_D$.
\end{definition}
For the theory of quasiconformal mappings in the plane, we refer to e.g. \cite{Ahlfors_lect} or~\cite{Lehto_book}.
\begin{definition}
Denote by \emph{$\clS_k$} the class of all $f\in\clS$ admitting $k$-q.c. extensions to~$\Complex$.
\end{definition}
\begin{remark}\label{RM_infinity}
Due to the fact that all isolated singularities of quasiconformal mappings are removable, the q.c.-extendibility of a function~$f$ to~$\Complex$ is in fact a bit stronger condition than the q.c.-extendibility to~$\ComplexE$: in addition to existence of a $k$-q.c. extension $F:\ComplexE\to\ComplexE$, it is required that $F(\infty)=\infty$.
As a result, certain properties of the class $\clS_k$  are different from those of the wider class formed all~$f\in\clS$ having $k$-q.c. extensions to~$\ComplexE$. In particular,  $\clS_k$ admits Lehto's Majorant Principle~\cite{Lehto_Majorant}, while the latter class does not. Moreover, if~$f\in\clS_k$, then $|f(z)|< M(k)\le 4^{K-1}$, $K:=(1+k)/(1-k)$, for all~$z\in\UD$, see \cite{Kuhnau1975, Gut1978, Gokturk}, but this is not necessarily the case if $f\in\clS$ admits a $k$-q.c. extension to~$\ComplexE$.
\end{remark}

\begin{result}[Becker \protect{\cite{Becker72,Becker76}}]\label{TH_Becker}
Let $k\in[0,1)$ and let $(f_t)$ be a radial Loewner chain whose Herglotz function~$p$ satisfies
\begin{equation}\label{EQ_Beckers-condition}
  p(\UD,t) \subset U(k):=\left\{ w \in \C \colon \left|\frac{w-1}{w+1}\right| \le k\right\}\quad\text{for  a.e.~$t\ge0$}.
\end{equation}
Then for every~$t\ge0$, the function~$f_t$ admits a $k$-q.c. extension to~$\Complex$. In particular, such an
extension for~$f_0$ is given by
\begin{equation}\label{EQ_BeckerExt}
F(\rho e^{i\theta}):=\left\{
 \begin{array}{ll}
  f_0(\rho e^{i\theta}), & \text{if~$0\le \rho<1$},\\[1ex]
  f_{\log \rho}(e^{i\theta}), & \text{if~$\rho\ge1$}.
 \end{array}
\right.
\end{equation}
\end{result}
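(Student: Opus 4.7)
I would establish the theorem for $f_0$ and obtain the statement for each $f_t$, $t>0$, by applying the same argument to the shifted Loewner chain $(f_{t+s})_{s\ge 0}$, whose Herglotz function $(z,u)\mapsto p(z,t+u)$ also satisfies~\eqref{EQ_Beckers-condition}. For~$f_0$ I would take~$F$ defined by~\eqref{EQ_BeckerExt} as the candidate extension and verify the three conditions from the analytic definition of a $k$-quasiconformal map: (i) $F$ is an orientation-preserving homeomorphism of~$\C$ onto~$\C$; (ii) $F$ is absolutely continuous on lines; (iii) $|\partial_{\bar z}F|\le k\,|\partial_z F|$ a.e.\ in $\C$.

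The core computation is (iii), carried out on the open set $\Omega:=\{|z|>1\}$. It is convenient to work in polar coordinates $z=\rho e^{i\theta}$ and set $\tau:=\log\rho$, $\zeta:=e^{i\theta}$. Writing the Loewner\,--\,Kufarev PDE $\partial_t f_t(w)=w\,f_t'(w)\,p(w,t)$ at $w=\zeta$ (which is legitimate after checking that $f_t$, $f_t'$ and $p(\cdot,t)$ extend continuously to $\overline{\UD}$ under Becker's hypothesis, or, alternatively, after an approximation argument applied to $f_t|_{\{|w|\le r\}}$, $r\uparrow 1$) one obtains $\partial_\rho F=\rho^{-1}\zeta f_\tau'(\zeta)p(\zeta,\tau)$ and $\partial_\theta F=i\zeta f_\tau'(\zeta)$. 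Converting to Wirtinger derivatives via $\partial_z=\tfrac12 e^{-i\theta}(\partial_\rho-i\rho^{-1}\partial_\theta)$ and $\partial_{\bar z}=\tfrac12 e^{i\theta}(\partial_\rho+i\rho^{-1}\partial_\theta)$ yields
\[
\partial_z F=\frac{f_\tau'(\zeta)}{2|z|}\bigl(p(\zeta,\tau)+1\bigr),\qquad \partial_{\bar z}F=\frac{\zeta\,f_\tau'(\zeta)}{2\bar z}\bigl(p(\zeta,\tau)-1\bigr),
\]
so that
\[
\mu_F(z)=\frac{\partial_{\bar z}F}{\partial_z F}=\frac{z}{\bar z}\cdot\frac{p(\zeta,\tau)-1}{p(\zeta,\tau)+1}
\]
and $|\mu_F|\le k$ a.e.\ on $\Omega$ by hypothesis~\eqref{EQ_Beckers-condition}. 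Since the same hypothesis also forces $\Re p\ge(1-k)/(1+k)>0$ and hence $|p+1|\ge 2/(1+k)$, the factor $\partial_z F$ is nonzero on $\Omega$ and the Jacobian $J_F=(1-|\mu_F|^2)|\partial_z F|^2$ is strictly positive there; in particular $F|_\Omega$ is a local orientation-preserving diffeomorphism.

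The main obstacle is to combine (iii) with (i) and (ii). For (i), I would argue as follows. The domains $f_\tau(\UD)$ are nested in~$\tau$ by~LC2, and LC3 together with the Koebe one-quarter theorem shows that they exhaust~$\C$, so the curves $C_\rho:=F(\{|z|=\rho\})$, $\rho\ge 1$, foliate $\C\setminus f_0(\UD)$. The local-diffeomorphism property of $F|_\Omega$ together with the transversality of the ``radial'' curves $\rho\mapsto F(\rho e^{i\theta})$ to the ``circular'' ones $\theta\mapsto F(\rho e^{i\theta})$ (both following from $J_F>0$) forces each $C_\rho$ with $\rho>1$ to be a Jordan curve and $F|_\Omega$ to be a homeomorphism onto $\C\setminus\overline{f_0(\UD)}$. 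Continuity of~$F$ across $|z|=1$ then follows by letting $\rho\to 1^+$ and using the locally uniform convergence $f_\tau\to f_0$ in~$\UD$; this identifies $\partial f_0(\UD)$ with the Jordan curve~$C_1$ and yields continuity of $F$ on~$\C$. Condition~(ii) is then automatic: $F$ is~$C^1$ off the real-analytic curve $|z|=1$, continuous across it, and each straight line meets this circle in at most two points, so absolute continuity on such lines follows by piecing together the smooth pieces via continuity. Combined with~(iii), the analytic criterion of quasiconformality gives that~$F$ is a $k$-q.c.\ extension of~$f_0$, completing the proof.
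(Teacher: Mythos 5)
First, note that the paper does not prove this statement at all: it is quoted from Becker \cite{Becker72,Becker76}, so your attempt can only be measured against Becker's argument. Your computation of $\mu_F(z)=\tfrac{z}{\bar z}\,\tfrac{p-1}{p+1}$ is the standard formal motivation for the bound $k$, and it is algebraically correct; but as a proof it has genuine gaps. The central one is the boundary evaluation of the Loewner--Kufarev PDE: your formulas for $\partial_z F$ and $\partial_{\bar z}F$ on $\{|z|>1\}$ use $f_\tau'(e^{i\theta})$, $p(e^{i\theta},\tau)$ and $\partial_\theta f_\tau(e^{i\theta})$, and none of these objects exists in general. Even for quasidisks (which is the \emph{conclusion} of the theorem) the derivative $f_t'$ has no continuous extension to $\overline\UD$, the boundary curve is typically non-rectifiable so $\theta\mapsto f_\tau(e^{i\theta})$ is nowhere differentiable, and $p(\cdot,t)$ is merely a bounded holomorphic function with a.e.\ radial limits; moreover $t\mapsto f_t$ is only locally absolutely continuous and satisfies the PDE for a.e.\ $t$, not in a $C^1$ sense. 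For the same reason your claim that (ii) is ``automatic'' because $F$ is $C^1$ off $|z|=1$ is false. The parenthetical fallback (``an approximation argument applied to $f_t|_{\{|w|\le r\}}$'') is in fact the whole proof and cannot be left implicit: Becker's route is to build honest $k$-q.c.\ maps from \emph{interior} data, prove they are injective via estimates on solutions of the Loewner ODE, and pass to a locally uniform limit using the compactness of normalized $k$-q.c.\ families, under which $k$-quasiconformality (including the ACL property and the a.e.\ Beltrami bound) is inherited; a pointwise limit of Beltrami coefficients by itself proves nothing.

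The homeomorphism step (i) is likewise circular. Saying that the curves $C_\rho=F(\{|z|=\rho\})$ ``foliate'' $\C\setminus f_0(\UD)$ presupposes that each $f_t$ extends continuously and injectively to $\overline\UD$ and that $\overline{f_s(\UD)}\subset f_t(\UD)$ for $s<t$; neither follows from LC1--LC3 alone (for a general Loewner chain the boundaries $\partial f_t(\UD)$ may touch, and no continuous boundary extension need exist when these boundaries are not locally connected). These facts are exactly what hypothesis \eqref{EQ_Beckers-condition} must be used to establish, quantitatively, before the formula \eqref{EQ_BeckerExt} even defines an injective map. And even granting smoothness, a local diffeomorphism of $\{|z|>1\}$ onto its image is not automatically a global homeomorphism; one needs a properness/covering argument. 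A small further point: for $t>0$ the shifted family $(f_{t+s})_{s\ge0}$ is not a Loewner chain in the sense of the definition (since $f_t'(0)\neq1$), so the reduction needs a rescaling remark. In short, the Beltrami computation is the right heuristic, but injectivity, continuity, ACL and the a.e.\ derivative formulas must come out of the approximation-plus-compactness scheme (or out of explicit ODE estimates), and that work is missing.
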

\begin{definition}
  The extension~$F$ of $f=f_0$ given in the above theorem is called a \textsl{Becker extension} of $f\in\clS$.
\end{definition}
\begin{remark}
Theorem~\ref{TH_Becker} has a sort of converse~\cite[Theorem~2\,(II)]{Istvan}: if $(f_t)$ is a Loewner chain such that for any $t\ge0$, $f_t$ extends continuously to~$\partial\UD$ and if $F:\Complex\to\Complex$ defined by~\eqref{EQ_BeckerExt} is a $k$-q.c. mapping, then the Herglotz function~$p$ of the Loewner chain~$(f_t)$ satisfies~\eqref{EQ_Beckers-condition}.
\end{remark}

Becker's condition for q.c.-extendibility given in the above theorem is sufficient, but it is not necessary, see e.g. \cite[Theorem~3]{Istvan}. However, most of the known sufficient conditions can be deduced from Theorem~\ref{TH_Becker}, see e.g. \cite[Sect.\,5.3--5.4]{Becker80}, \cite{Ikkei09, Ikkei11}, and~\cite{Pommerenke86}. Moreover, Becker's condition~\eqref{EQ_Beckers-condition} remains to be sufficient for q.c.-extendibility in other variants of the Parametric Method, see e.g.~\cite{HottaGum::QC-chordal}, \cite[Theorem~1]{Istvan}, and~\cite{Ikkei2}. Finally, there are many examples, see e.g. \cite[Proposition~4.2]{PAMS2020}, in which Becker's extension is the best possible in the sense that it has the smallest possible sharp upper bound $\esssup_{|z|>1}|\mu_F(z)|$ for the Beltrami coefficient~$\mu_F$ among all q.c.-extensions~$F:\Complex\to\Complex$ of a given $f\in\clS$.

Therefore, the study of Becker's extensions represents a considerable interest. One of the questions to investigate is whether any  $f\in\clS$ with a q.c.-extension to~$\Complex$ admits also a Becker q.c.-extension. From the main result of this paper, Theorem~\ref{TH_MAIN}, it follows that the answer is positive at least for all $q$-q.c. extendible functions with $q\in(0,1/3)$. The main difficulty resides however in proving existence of $k$-q.c. Becker extensions with some $k=k_0(q)\in(0,1)$ depending only on~$q$ but not on the function~$f$.

\subsection{Loewner Chains in~$\Delta$}
In some cases, it is more convenient to work with univalent functions in $\Delta:=\ComplexE\setminus\overline\UD$ normalized by $g(\infty)=\infty$. Most of the classical Loewner Theory extends easily to this case. In particular, we say that $(g_t)_{t\ge0}$ is a (radial) \textsl{Loewner chain in~$\Delta$} if:
\begin{itemize}
\item[(i)] for any $t\ge0$, $g_t$ is a univalent meromorphic function in~$\Delta$ with $g_t(\infty)=\infty$;
\item[(ii)] for any $s\ge0$ and any $t\ge s$, $g_s(\Delta)\subset g_t(\Delta)$;
\item[(iii)] the function $t\mapsto g'_t(\infty)\in\Complex$ is locally absolutely continuous in~$[0,+\infty)$, with $g'_0(\infty)=1$ and $$\lim_{t\to+\infty}g'_t(\infty)=0,$$ where as usual by $g_t'(\infty)$ we mean the coefficient of~$z$ in the Laurent expansion of $g_t(z)$ in~${\Delta\setminus\{\infty\}}$.
\end{itemize}
Moreover, a function $p:\Delta\times[0,+\infty)$ is said to be a \textsl{Herglotz function in~$\Delta$}, if $(\zeta,t)\mapsto p(1/\zeta,t)$ is a Herglotz function in~$\UD$.

If $(g_t)$ is a Loewner chain in~$\Delta$, then $$\bigcap\nolimits_{t\ge0}\Complex\setminus g_t(\Delta)$$ is non-empty. In fact, this set contains exactly one point, which we denote by~$w_0$. The functions $f_t(\zeta):=\big(g_t(1/\zeta)-w_0\big)^{-1}$, $\zeta\in\UD$, form a Loewner chain in~$\UD$. It follows that there exists a null-set $N\subset[0,+\infty)$ such that the limit
$$
\frac{\partial g_t(z)}{\partial t}:=\lim_{s\to t}\frac{g_s(z)-g_t(z)}{s-t}
$$
exists locally uniformly in~$\Delta\setminus\{\infty\}$. Moreover, using the Loewner\,--\,Kufarev PDE for $(f_t)$ we easily see that $(g_t)$ satisfies
\begin{equation}\label{EQ_LK-PDE-in-Delta}
\frac{\partial g_t(z)}{\partial t}=-zg_t'(z)p(z,t),\quad\text{a.e.\,$t\ge0$,~}~z\in\Delta\setminus\{\infty\},
\end{equation}
for a suitable Herglotz function~$p$ in~$\Delta$, which is uniquely defined for all $t\in[0,+\infty)\setminus N$.

Conversely, if $p$ is a Herglotz function in~$\Delta$, then there exists a Loewner chain~$(g_t)$ in~$\Delta$, satisfying~\eqref{EQ_LK-PDE-in-Delta}. The only difference from the case of~$\UD$, which one has to keep in mind, is that to each Herglotz function there corresponds a one-parameter family of Loewner chains~$(g_t)$ that differ from each other by an additive constant.

\section{Subordination chains defined by diffeomorphisms}\label{S_LCh-diffeos}
In this section we establish an auxiliary assertion concerning the Herglotz function of a Loewner chain, assuming that the boundaries $\partial f_t(\UD)$ and their evolution in~$t$ is sufficiently regular. Since this result might have some independent interest, we state and prove it separately from the main discussion of this paper.

Let $\Psi:\{e^{t+i\tau}:t\in(a,b),\,\tau\in[0,2\pi]\}\to\Complex$ be a homeomorphic map such that for any $s,t\in(a,b)$, $s<t$, the curve $\Gamma_s:=\Psi(\{z:|z|=e^s\})$ is contained in the Jordan domain~$\Omega_t$ bounded by~$\Gamma_t$. Replacing $\Psi$ with $z\mapsto\Psi(\bar z)$ if necessary, we may suppose that for each~$t\in(a,b)$ the parametrization $[0,2\pi]\ni\tau\mapsto\Psi(e^{t+i\tau})$ induces on~$\Gamma_t$ the positive (i.e. counterclockwise) orientation. Finally, using translations we may assure that $0\in\Omega_t$ for all~$t\in(a,b)$.

For each $t\in(a,b)$, let $f_t$ be the conformal mapping of~$\UD$ onto $\Omega_t$ and normalized by $f_t(0)=0$, $f_t'(0)>0$. Then the family $(f_t)_{t\in(a,b)}$ is a subordination chain. We combine results from~\cite{Schippers2005} and~\cite{OE} to show that if $\Psi$ is regular enough, then $(f_t)$ is differentiable for all~$t\in(a,b)$ and satisfies the Loewner\,--\,Kufarev equation with a Herglotz function having a continuous extension to~$\partial\UD$.

\begin{proposition}\label{PR_C2-homotopy}
If $\Psi$ is $C^2$-diffeomorphic (i.e. it is of class $C^2$ and its Jacobian determinant does not vanish), then for any $t\in(a,b)$ the limit
\begin{equation}\label{EQ_f_t-time-derivative}
\frac{\partial f_t(z)}{\partial t}=\lim_{s\to t}\frac{f_s(z)-f_t(z)}{s-t}
\end{equation}
exists locally uniformly in~$\UD$. Moreover, $(f_t)$ satisfies the Loewner\,--\,Kufarev PDE
\begin{equation}\label{EQ_PDE-prop}
\frac{\partial f_t(z)}{\partial t}=zf'_t(z)p(z,t),
\end{equation}
where $p(\cdot,t)$ is a holomorphic function in~$\UD$ with positive real part and continuous extension to $\partial\UD$ which is  uniquely determined by $\Im p(0,t)=0$ and
\begin{equation}\label{EQ_Re-p-on-boundary}
  \Re p(e^{i\theta},t)=-\frac{1}{|f_t'(e^{i\theta})|}\Im\left(\overline{\frac{\partial\Psi(e^{t+i\tau})}{\partial \tau}}\,\frac{\partial\Psi(e^{t+i\tau})}{\partial t}\right)\left|\frac{\partial\Psi(e^{t+i\tau})}{\partial \tau}\right|^{-1}
\end{equation}
for all $\theta\in[0,2\pi]$ with $\tau=\tau(\theta)$ satisfying $f_t(e^{i\theta})=\Psi(e^{t+i\tau(\theta)})$.
\end{proposition}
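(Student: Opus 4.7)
The plan is to proceed in three stages: first establish $C^1$-regularity of each Riemann map $f_t$ up to the unit circle, then derive differentiability of the family $(f_t)$ in $t$ together with the PDE~\eqref{EQ_PDE-prop}, and finally identify the boundary values of the Herglotz density~$p$. Since $\Psi$ is a $C^2$-diffeomorphism, each Jordan curve $\Gamma_t$ is $C^2$-smooth, so the Kellogg--Warschawski theorem yields that $f_t$ extends to a $C^{1,\alpha}$-diffeomorphism of $\overline{\UD}$ onto $\overline{\Omega_t}$ with $f_t' \neq 0$ on $\overline{\UD}$, and the $C^{1,\alpha}$-norms of the extensions remain locally bounded as~$t$ varies. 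In particular, the angular correspondence $\theta \leftrightarrow \tau(\theta,t)$ defined by $f_t(e^{i\theta}) = \Psi(e^{t+i\tau(\theta,t)})$ is a $C^1$-diffeomorphism of the circle depending continuously on~$t$.

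For the second stage I would combine the smooth-dependence results of~\cite{Schippers2005} for conformal maps onto smoothly varying Jordan domains with the abstract Loewner-chain framework of~\cite{OE}. Together these yield both the existence of the locally uniform limit~\eqref{EQ_f_t-time-derivative} and the representation~\eqref{EQ_PDE-prop} for a unique Herglotz density~$p$ with non-negative real part, where the strict subordination $\Gamma_s \subset \Omega_t$ for $s < t$ forces $\Re p$ to be strictly positive in the interior. The normalization $\Im p(0,t) = 0$ is automatic: evaluating~\eqref{EQ_PDE-prop} at $z=0$ gives $\partial_t f_t'(0) = f_t'(0)\,p(0,t)$, and since $f_t'(0) > 0$ is real, $p(0,t)$ is real as well.

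The third stage is the explicit identification of $\Re p$ on $\partial\UD$. The key geometric observation is that $e^{i\theta} f_t'(e^{i\theta}) = -i\,\partial_\theta f_t(e^{i\theta})$ has modulus $|f_t'(e^{i\theta})|$ and direction equal to the outward unit normal~$\nu$ of~$\Gamma_t$ at $f_t(e^{i\theta})$, because $f_t$ is orientation-preserving and the unit circle is positively parametrized. Decomposing the right-hand side of~\eqref{EQ_PDE-prop} in the orthonormal real frame $(\nu, i\nu)$, its $\nu$-component equals $|f_t'(e^{i\theta})|\,\Re p(e^{i\theta},t)$. The left-hand side $\partial_t f_t(e^{i\theta})$, viewed along the boundary, has $\nu$-component equal to the outward normal velocity of the moving curve~$\Gamma_t$ at $f_t(e^{i\theta}) = \Psi(e^{t+i\tau})$; since the outward unit normal along the $\Psi$-parametrization is $-i\,\partial_\tau\Psi/|\partial_\tau\Psi|$, a short computation gives this velocity as $-\Im\bigl(\overline{\partial_\tau\Psi}\,\partial_t\Psi\bigr)/|\partial_\tau\Psi|$, and dividing by $|f_t'(e^{i\theta})|$ yields~\eqref{EQ_Re-p-on-boundary}. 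Continuity of $p$ on~$\overline{\UD}$ follows by recovering $p$ from its continuous boundary real part via the Schwarz--Poisson integral.

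The main obstacle is the second stage: $\Psi$ is merely $C^2$-diffeomorphic and not holomorphic, so differentiability of $t \mapsto f_t$ cannot be read off from any complex structure and must be extracted from the purely real-analytic smoothness of the boundary deformation. This is precisely where the combination of Schippers' smooth-dependence theory and the Loewner-chain machinery of~\cite{OE} becomes essential; the boundary formula of the third stage, by contrast, is a direct and elementary geometric consequence once the PDE is in hand.
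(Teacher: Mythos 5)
Your overall strategy follows the paper's (same two references, and your normal-velocity computation is the same geometric identity the paper uses), but as written there are two genuine gaps, both located exactly where you admit the difficulty lies. First, your second stage is a black-box citation rather than a proof. The paper's mechanism is specific: for $u\in\Gamma_t$ one constructs the normal displacement $\Delta n_u(s)$ of $\Gamma_s$ measured along the normal line to $\Gamma_t$ at $u$, checks that it is differentiable at $s=t$ (this is where the $C^2$-diffeomorphism hypothesis on $\Psi$ enters), and applies Schippers' Theorem~1 to obtain Hadamard's variational formula for the Green function, $h_t(z)=\frac{\di}{\di s}\big|_{s=t}g_s(z,0)$, as a Poisson-type integral of $\nu(u)\,\partial g_t(u,0)/\partial n$; then the result of Roth and Schippers (\cite{OE}, proof of Theorem~6) converts differentiability of the Green function in $t$ into the locally uniform limit \eqref{EQ_f_t-time-derivative} and the PDE \eqref{EQ_PDE-prop} with $p(\cdot,t)=H_t\circ f_t$, where $H_t$ is holomorphic, $\Re H_t=h_t$ and $\Im H_t(0)=0$. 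Saying that one should ``combine the smooth-dependence results of \cite{Schippers2005} with the Loewner-chain framework of \cite{OE}'' neither verifies the hypotheses of those results nor identifies which statement yields which conclusion, so the crucial step remains unproved.

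Second, your third stage identifies $\Re p$ on $\partial\UD$ by decomposing the PDE at boundary points into normal and tangential components. This presupposes that $\partial f_t/\partial t$ extends continuously to $\partial\UD$ and that \eqref{EQ_PDE-prop} holds there; but the limit \eqref{EQ_f_t-time-derivative} is only locally uniform in the open disk, so this boundary regularity in $t$ is precisely what would have to be established, and you do not establish it. The paper avoids the issue entirely: since $\Re p\circ f_t=h_t$ and $h_t$ is the Poisson integral of the explicit continuous boundary data $\nu(u)\,\partial g_t(u,0)/\partial n=\nu(u)\,|(f_t^{-1})'(u)|$, the boundary formula \eqref{EQ_Re-p-on-boundary} is read off directly from the variational formula (your computation of $\nu$ in terms of $\Psi$ is exactly the paper's). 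Relatedly, your closing claim that continuity of $p$ on $\overline\UD$ ``follows by recovering $p$ from its continuous boundary real part via the Schwarz--Poisson integral'' is not valid as stated: a harmonic function with merely continuous boundary values need not have a continuous harmonic conjugate. The paper uses that the boundary data is $C^1$ and invokes Privalov's theorem to conclude that $H_t$, hence $p(\cdot,t)$, extends continuously to the boundary. Your first stage (Kellogg--Warschawski regularity of $f_t$) and the observation that $f_t'(0)>0$ forces $\Im p(0,t)=0$ are fine.
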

\begin{remark}
  An argument used  in the proof of Theorem~\ref{TH_MAIN} shows that in conditions of the above proposition a stronger assertion holds. Namely, the functions $(z,t)\mapsto \log |f_t'(z)|$ and $p$ are bounded on compact subsets of $\overline\UD\times(a,b)$. It follows that $t\mapsto f_t\in\Hol(\UD,\Complex)$ is locally absolutely continuous on~$(a,b)$ and hence $(f_t)$ is a \textit{solution} to the Loewner\,--\,Kufarev equation~\eqref{EQ_PDE-prop}; for a precise definition, see e.g.\,\cite[Definition~2.1]{duality}.
\end{remark}
\begin{proof}[\textbf{Proof of Proposition~\protect{\ref{PR_C2-homotopy}}}]
Since $\Gamma_t$'s are of class $C^2$, the functions $f_t$ extend $C^1$-smoothly to $\partial\UD$ and $f'_t$ does not vanish in~$\overline\UD$, see e.g. \cite[Theorem~3.5 on p.\,48]{Pommerenke:BB}.

Fix some $t\in(a,b)$.
Following~\cite{Schippers2005} denote by $n_u$ and $\mathcal L_u$,  $u\in\Gamma_{t}$, the outward normal unit vector and the normal line to the curve $\Gamma_{t}$ at the point~$u$, respectively. Furthermore, for $s\in(a,b)$, $s\neq t$, we denote by $\Omega_{s,t}$ the doubly connected domain bounded by $\Gamma_s$ and $\Gamma_t$  and let $I(s,\tau)$ be the unique connected component of $\mathcal L_u\cap\Omega_{s,t}$ for which $u$ is one of the end-points. Denote by $w=w(s,u)$ the other end-point. If $|t-s|$ is small enough, then $w(s,u)$ lies on~$\Gamma_s$ and moreover, the map $u\mapsto w(s,u)$ is a bijection between $\Gamma_t$ and $\Gamma_s$.  Write $\Delta n_u(s):=\sgn(s-t)\big|u-w(s,u)\big|$. It is not difficult to see that $w(s,u)$ is differentiable in $s$. Therefore,
$$
\nu(u):=\left.\frac{\di\Delta n_u(s)}{\di s}\right|_{s=t}
$$
is well-defined for all~$u\in\Gamma_t$.

Denote by $g_s$ the Green function of~$\Omega_s$. By \cite[Theorem~1]{Schippers2005}, the function $t\mapsto g_s(z,0)$ is differentiable at~$s=t$ locally uniformly w.r.t. $z\in\Omega_{t}$ and
\begin{equation}\label{EQ_Schippers}
  h_{t}(z):=\left.\frac{\di}{\di s}\right|_{s=t}\!\! g_s(z,0)=\frac1{2\pi}\int_{\Gamma_{t}}\frac{\partial g_{t}(u,z)}{\partial n}\frac{\partial g_{t}(u,0)}{\partial n}\nu(u)\,\big|\di u\big|,
\end{equation}
where $\partial/\partial n$ stands for the derivative along the outward normal direction to~$\Gamma_{t}$ and $|\di u|$~is the length element of~$\Gamma_{t}$.

The r.h.s. of~\eqref{EQ_Schippers} is a Poisson integral in the domain~$\Omega_{t}$. Taking into account regularity of the boundary, we conclude that $h_{t}(z)$ is harmonic in~$\Omega_{t}$ and continuous on its closure, with
\begin{equation}\label{EQ_h-on-boundary}
  h_{t}(u)=\frac{\partial g(u,0)}{\partial n}\nu(u)\quad\text{for all~$u\in\Gamma_{t}$.}
\end{equation}

Notice that
 $\nu(u)$ coincides with the projection of $\partial\Psi/\partial t$ onto $n_u$. More precisely,
\begin{equation}\label{EQ_normal-velocity}
  \nu(\Psi(e^{t+i\tau}))=-\Im\left(\overline{\frac{\partial\Psi(e^{t+i\tau})}{\partial \tau}}\, \frac{\partial\Psi(e^{t+i\tau})}{\partial t}\right)\left|\frac{\partial\Psi(e^{t+i\tau})}{\partial \tau}\right|^{-1}
\end{equation}
for all $\tau\in[0,2\pi]$. Indeed, fix some $\tau\in[0,2\pi]$ and let $\tau_s$ be a  solution to $\Psi(e^{s+i\tau_s})=w(s,u)$ with $u:=\Psi(e^{t+i\tau})$, which is clearly unique modulo~$2\pi$. Then $\tau_t=\tau$ and $s\mapsto\tau_s$ is differentiable. Denote $v:=\big(\di\tau_s/\di s\big)|_{s=t}$. By the Chain Rule,
\begin{multline*}
\nu(\Psi(e^{t+i\tau}))=\overline{n_u}\,\left.\frac{\partial w(s,u)}{\partial s}\right|_{s=t}=\Re\left(\overline{n_u}\,\left.\frac{\partial w(s,u)}{\partial s}\right|_{s=t}\right)=\\
=\Re\left(\overline{n_u}\,\left(\frac{\partial\Psi(e^{t+i\tau})}{\partial t}+v\frac{\partial\Psi(e^{t+i\tau})}{\partial \tau}\right)\right)=\Re\left(\overline{n_u}\,\frac{\partial\Psi(e^{t+i\tau})}{\partial t}\right),
\end{multline*}
where the last equality holds because $\partial\Psi/\partial\tau$ is orthogonal to~$n_u$. To obtain~\eqref{EQ_normal-velocity} it remains to substitute $n_u=-i\partial\Psi/\partial\tau\big|\partial\Psi/\partial\tau\big|^{-1}$.

It is shown in~\cite[Proof of Theorem~6]{OE} that differentiability of the Green function $g_t$ w.r.t. the parameter~$t$ implies existence of the locally uniform limit~\eqref{EQ_f_t-time-derivative} and that in such a case $(f_t)$ satisfies the Loewner\,--\,Kufarev equation with $p(\cdot,t)=H_{t}\circ f_t$, where $H_t$ is the holomorphic function in~$\Omega_t$ satisfying $\Im H_t(0)=0$ and $\Re H_t=h_t$.

Note that ${\partial g(u,0)}/{\partial n}=|(f^{-1}_t)'(u)|$, which is a function of class $C^1$ defined on~$\Gamma_t$. Using formulas~\eqref{EQ_h-on-boundary} and~\eqref{EQ_normal-velocity}, we see that $h_t$ extends  $C^1$-smoothly to~$\partial\UD$. By Privalov's Theorem, it follows that $H_t$ and hence $p(\cdot,t)$ extend continuously to the boundary. Combining now \eqref{EQ_h-on-boundary} and~\eqref{EQ_normal-velocity}, we obtain~\eqref{EQ_Re-p-on-boundary}.
\end{proof}

\section{Proof of the Main Result}
As usual we will denote by $\Sigma$ the class of all univalent functions $g$ in $\Delta:=\ComplexE\setminus\overline\UD$ with the expansion of the form $g(z)=z+b_0+b_1/z+b_2/z^2+\ldots$ and let $\Sgk$, $k\in(0,1)$, stands for the class of all $g\in\Sigma$ admitting $k$-q.c. extentions to~$\ComplexE$.

\subsection{Notation and the main construction}\label{SS_main-construction}
In this section we explain the ideas used in the proof of Theorem~\ref{TH_MAIN}.
Fix $q\in(0,1/3)$ and $f\in\clS_q$, $f(\zeta)=\zeta+a_2(f)\zeta^2+\ldots$ Then $g_0(z):=1/f(1/z)$ belongs to $\Sigma(q)$. Moreover,
\begin{equation}\label{EQ_Schw-estimate}
  (1-|\zeta|^2)^2|S_f(\zeta)|\le 6q\qquad \text{for all $\zeta\in\UD$,}
\end{equation}
see \cite[Satz~$3^*$]{Kuhnau69}  or \cite[Example~9 on p.\,134]{Lawrynowicz}. It follows that $g_0$ has a $k$-q.c. extension $G:\ComplexE\setminus\{0\}\to\ComplexE$, with $k:=3q$, given explicitly by
\begin{gather}\label{EQ_g-extension}
  G(z):=\begin{cases}
          g_0(z), & \mbox{if } \zeta\in\Delta, \\
          g_t(z/|z|),~t:=-\log|z|, & \mbox{if } 0<|z|\le1,
        \end{cases}\\
  \intertext{where~} \label{EQ_g_t-explicit} 1/g_t(w):=f(e^{-t}/w)+\frac{(1-e^{-2t})f'(e^{-t}/w)}{e^{-t}w-\frac12(1-e^{-2t})P_f(e^{-t}/w)},~w\in\Delta,~t\ge0,
\end{gather}
and $P_f(\zeta):=f''(\zeta)/f'(\zeta)$, $\zeta\in\UD$, is the so-called pre-Schwarzian of~$f$.

The above well-known extension is originally due to Ahlfors and Weill~\cite{AW62}, see also \cite{Ahlfors74}. The relation to Loewner chains was discovered by Becker, see \cite[Sect.\,4]{Becker72} and \cite[Sect.\,5]{Becker80}.

\begin{remark}\label{RM_real-analytic}
Note that every isolated singularity of a q.c.-map is removable; hence in fact, $G$ is a $k$-q.c. automorphism of~$\ComplexE$. Moreover, expressing $f$ via~$g_0$ and substituting $e^t w=1/\bar z$, $e^{-2t}=z\bar z$ in the above formulas, we can rewrite~\eqref{EQ_g-extension} for $z\in\Delta$ as follows:
\begin{equation}\label{EQ_explicit-zeta-zetabar}
  G(z)=g_0(1/\bar{z})- \frac{(1-z\bar{z})g_0'(1/\bar{z})}{\bar{z}+\tfrac12(1-z\bar{z})P_{g_0}(1/\bar{z})} = b_0+z+3b_1\bar{z}+6b_2\bar{z}^2+\ldots,
\end{equation}
where $b_j$'s are the Laurent coefficients of~$g_0$, i.e. $g_0(w)=w+b_0+b_1/w+b_2/w^2+\ldots$
In~particular, it follows that $G$ is real-analytic in~$\UD$, including the point~$0$.
\end{remark}

The extension~\eqref{EQ_g-extension} can be obtained with the help of Becker's construction. However, $G(0)=-a_2(f)$. Therefore, if $a_2(f)\neq0$, then it does provide a Becker extension for $\hat f(z):=f(z)/\big(1+a_2(f)f(z)\big)$, ${z\in\UD}$, with the Loewner chain $\hat f_t(z):=1/\big(g_t(z)+a_2(f)\big)$, but not for the function~$f$ itself.

In this paper we show that the family~$(g_t)$ can be modified in such a way that it defines a $k_0$-q.c. extension of~$g$, with some ${k_0\in(0,1)}$, having a fixed point at the origin. This would yield the desired Becker extension of~$f$.

The idea is as follows. Denote by $D_t$ the Jordan domain bounded by $\Gamma_t:=g_t(\partial\UD)$. It is known that $|f(z)|<4^{Q-1}$, $Q:=(1+q)/(1-q)$, for all $z\in\UD$ and any $f\in\clS_q$, see e.g.~\cite{Gokturk}\footnote{This estimate is not sharp. See~\cite{Kuhnau1975, Gut1978} for sharp estimates.}.  It follows that $0\in D_0$. Hence, there exists also $t_1>0$ such that $0\in D_{t_1}$. Furthermore, we fix some $t_0\in(0,t_1)$. Note also that $-a_2(f)\in D_t$ for all $t\ge0$. Let $L$ be a diffeomorphism of~$D_{t_1}$ onto itself that sends $-a_2(f)$ to~$0$. To have control on its properties, we choose $L$ in such a way that $L\circ G=G\circ e^{-t_0}T$ for some Moebius transformation $T$ which has no pole in~$\overline\UD$ and maps $e^{t_0-t_1}\UD$ onto itself. In particular, $L$ has a diffeomorphic extension to $D_{t_0}$ (although, in general, $L(D_{t_0})\neq D_{t_0}$). Denote by $\tilde g_t$, $t\ge t_1$, the conformal mapping of $\Delta$ onto the unbounded component of~$\Complex\setminus L(\Gamma_t)$ normalized by~$\tilde g_t(\infty)=\infty$, $\tilde g'_t(\infty)>0$. For $t\in[0,t_1]$ we set $\tilde g_t:=g_t$. Then using Proposition~\ref{PR_C2-homotopy}, it is not difficult to show that the functions $f_t(z):=1/\tilde g_t(1/z)$ form a Loewner chain in~$\UD$ starting from~$f_0=f$.

The reader might ask why we do need to fix $t_0<t_1$. As it will be clear from the proof, it is crucial to have certain control over the behaviour of~$L$ in a domain slightly larger than $D_{t_1}$. However, for a moment we may simply assume that $t_0=t_1$.

Denote by $p$ the Herglotz function of~$(f_t)$. It is easy to check that for ${t\in[0,t_1)}$,
$$
 \frac{1-p(z,t)}{1+p(z,t)}=\frac12z^2(1-e^{-2t})^2S_f(e^{-t}z),\quad z\in\UD.
$$
Taking into account~\eqref{EQ_Schw-estimate}, we see that $p$ satisfies Becker's condition~\eqref{EQ_Beckers-condition} with $k:=3q$ for all $t\in[0,t_1)$.

Moreover, there exists another suitable value of $k\in(0,1)$ such that condition~\eqref{EQ_Beckers-condition}  holds also for all $t>t_1$. Indeed, the curves $L(\Gamma_t)$ are images of the concentric circles in~$\UD$ w.r.t. the map $\Psi:=L\circ G=G\circ e^{-t_0}T$, which is a real-analytic diffeomorphism of $e^{t_0-t_1}\overline\UD$ onto~$\overline{D_{t_1}}$. Therefore, $\Re p(z,t)$ is positive and real analytic for all $(z,t)\in{\overline\UD\times(t_1,+\infty)}$, and converges to a holomorphic function with positive real part as ${t\to+\infty}$ or~${t\to t_1}$.  As a result, the values of $p$ on $\UD\times(t_1,+\infty)$ are contained in some compact set ${X\subset\UH:=}{\{w:\Re w>0\}}$. This means that \eqref{EQ_Beckers-condition} holds if $k\in(0,1)$ is chosen sufficiently close~to~$1$.

The main difficulty is to show that one can choose~$k$ depending only on~$q\in(0,1/3)$, but not on~$f\in\clS_q$. Although this is very plausible to be indeed the case, a rigourous proof requires considerable work. In particular, we need to estimate certain quantities describing the Riemann map on~$\partial\UD$ via quantities measuring the regularity of the boundary. There are many studies on the boundary behaviour of conformal mappings addressing such problems. However, most of known results contain constants depending on the Riemann map itself, while in our situation the constant may depend only on~$q$. This makes impossible to apply standard results directly.

One of the main ingredients of our proof is the following assertion.
\begin{proposition}\label{PR_subharmonic}
There exists a constant $a=a(q)>0$ depending only on~$q$ such that the function $\varphi(w):=|\Psi^{-1}(w)|^{-a(q)}$ is subharmonic in~$D_{t_0}$.
\end{proposition}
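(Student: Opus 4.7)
The plan is to reduce subharmonicity of $\varphi$ to a pointwise differential inequality. Set $u(w):=\log|\Psi^{-1}(w)|$, so that $\varphi=e^{-au}$; since $\Psi^{-1}$ is real-analytic on $D_{t_0}$ with $\Psi^{-1}(0)=0$, the function $u$ is real-analytic on $D_{t_0}\setminus\{0\}$ with a logarithmic singularity at~$0$. A direct computation gives
\[
\Delta\varphi=\varphi\,\bigl(a^2|\nabla u|^2-a\,\Delta u\bigr)\qquad\text{on $D_{t_0}\setminus\{0\}$},
\]
so classical subharmonicity of $\varphi$ there reduces to the pointwise bound
\[
a\,|\nabla u(w)|^2\;\ge\;\Delta u(w),\qquad w\in D_{t_0}\setminus\{0\}.
\]
Since $\Psi^{-1}$ is a local diffeomorphism at $0$, $\varphi$ has a $|w|^{-a}$-type singularity at $w=0$, and standard arguments extend subharmonicity to all of $D_{t_0}$ once the pointwise bound is established. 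The task is therefore to find $a=a(q)>0$ for which the bound holds uniformly over $f\in\clS_q$.

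I would compute each side of the bound via the Beltrami equations for $\Psi^{-1}$. With $z:=\Psi^{-1}(w)=re^{i\theta}$ and $\mu:=\mu_\Psi(z)=\Psi_{\bar z}(z)/\Psi_z(z)$, the inverse-Beltrami formulas
\[
z_w=\frac{1}{\Psi_z(z)\,(1-|\mu|^2)},\qquad z_{\bar w}=-\frac{\mu}{\overline{\Psi_z(z)}\,(1-|\mu|^2)}
\]
give, after a short calculation,
\[
u_w=\frac{1-\bar\mu\,e^{2i\theta}}{2\,z\,\Psi_z(z)\,(1-|\mu|^2)}.
\]
Because $T$ is holomorphic, the chain rule yields $\mu_\Psi(z)=\mu_G(e^{-t_0}T(z))\cdot\overline{T'(z)}/T'(z)$; combined with the Ahlfors--Weill bound $|\mu_G|\le 3q$ (equivalent to the Kraus--Nehari-type estimate $(1-|\zeta|^2)^2|S_f(\zeta)|\le 6q$ for $f\in\clS_q$), this gives $|\mu|\le 3q$ uniformly, and hence
\[
|\nabla u|^2=4|u_w|^2\;\ge\;\frac{(1-3q)^2}{r^2\,|\Psi_z(z)|^2}.
\]

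For $\Delta u=4\operatorname{Re} u_{w\bar w}$, I would apply the Beltrami-adapted chain rule $\partial_{\bar w}=z_{\bar w}\partial_z+\overline{z_w}\partial_{\bar z}$ to the above expression for $u_w$. After some manipulation this produces a sum of terms, each of which is a bounded rational function in $\mu$, $\Psi_{zz}/\Psi_z$, $\mu_z$ and $\mu_{\bar z}$, divided by $r^2|\Psi_z(z)|^2$. Differentiating the explicit formula \eqref{EQ_explicit-zeta-zetabar} for $G$ and propagating the chain rule through $e^{-t_0}T$, each of these quantities can be expressed in terms of the Schwarzian $S_f$ and its first derivative $S_f'$ evaluated at $\xi:=e^{-t_0}T(z)$. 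Since $\Psi^{-1}(D_{t_0})=T^{-1}(\mathbb{D})$, such $\xi$ lies in the compact set $e^{-t_0}\overline{\mathbb{D}}\subset\mathbb{D}$, whose distance to $\partial\mathbb{D}$ is at least $1-e^{-t_0}$. The bound $(1-|\xi|^2)^2|S_f(\xi)|\le 6q$ together with its Cauchy-derived counterpart $(1-|\xi|^2)^3|S_f'(\xi)|\le C_1 q$ yields a uniform estimate
\[
|\Delta u(w)|\;\le\;\frac{C(q)}{r^2\,|\Psi_z(z)|^2},\qquad w\in D_{t_0}\setminus\{0\},
\]
and taking $a(q):=C(q)/(1-3q)^2$ closes the argument.

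The hardest part will be the computation of $\Delta u$ itself: the algebra is intricate because differentiating $\mu$ and $\Psi_z$ requires working through the explicit Ahlfors--Weill formula for $G$ and tracking how $S_f$ and $S_f'$ enter. The uniformity of the resulting $C(q)$ then rests on two standard facts --- the sharp Kraus--Nehari bound on $S_f$ and its Cauchy-derived counterpart for $S_f'$ on any compact subset of $\mathbb{D}$ --- together with the observation that the compact set $e^{-t_0}\overline{\mathbb{D}}$ remains separated from $\partial\mathbb{D}$ by a distance depending only on $q$, a property guaranteed by the $q$-dependent choice of $t_0$, $t_1$ and $T$ in Section~\ref{SS_main-construction}.
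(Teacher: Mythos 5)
Your overall strategy coincides with the paper's: write $\varphi=e^{-au}$ with $u=\log|\Psi^{-1}|$, reduce subharmonicity to the pointwise inequality $a|\nabla u|^2\ge\Delta u$, get a lower bound for $|\nabla u|$ from $|\mu_\Psi|\le 3q$, and an upper bound for $\Delta u$ of the same order $1/(r^2|\Psi_z|^2)$ with a constant depending only on $q$, the uniformity coming from Schwarzian-type estimates at points kept away from $\partial\UD$ by the choice of $t_0$. The paper organizes the identical computation by pulling back through $H=F^{-1}$ (writing $u=\Re\log T^{-1}\circ H$ and invoking Lemmas~\ref{LM_dF} and~\ref{LM_est-second-formal}), whereas you differentiate $\Psi^{-1}$ directly via the inverse Beltrami formulas; that is only a difference of bookkeeping, and your formula for $u_w$ and the resulting gradient bound are correct.

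The gap is that the decisive estimate, $|\Delta u|\le C(q)/(r^2|\Psi_z|^2)$, which is the entire content of the proposition, is asserted rather than carried out, and the justification offered for it is partly wrong. It is not true that all the second-order data reduces to $S_f$ and $S_f'$ at $\xi=e^{-t_0}T(z)$: the derivatives of $\mu_\Psi$ do (since $\mu_G(\zeta)=-\tfrac12(1-|\zeta|^2)^2S_f(\bar\zeta)$), but the terms $\Psi_{zz}/\Psi_z$ and $\Psi_{z\bar z}/\Psi_z$ involve $\partial\log\partial G$, hence the pre-Schwarzian $P_{g_0}$ and the Ahlfors--Weill denominator $1+\tfrac12(w-1/\bar w)P_{g_0}(w)$, which must be bounded, respectively bounded away from zero, uniformly over $\clS_q$; for this you need K\"uhnau's bound \eqref{EQ_Kuhnau69} and the majorant-principle estimate \eqref{EQ_preScw-in-Sigma_q} in addition to \eqref{EQ_Schw-estimate} and the Cauchy estimate for $S_f'$ --- exactly the content of the paper's Lemma~\ref{LM_est-second-formal}. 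Two further points you should make explicit: several terms of $u_{w\bar w}$ carry only a single factor $1/z$, so writing them as (bounded)$/(r^2|\Psi_z|^2)$ also uses that $|z|$ is bounded on $T^{-1}(\UD)$ by a constant depending only on $q$ (true, since $|z_0|\le\sqrt{3q}$, but it must be said); and the claim that subharmonicity "extends to all of $D_{t_0}$" is false as stated, because $\varphi\to+\infty$ at $w=0=\Psi(0)$ --- like the paper, you only obtain (and only need) subharmonicity on the punctured domain, which suffices because in Step~1 of the main proof the singular point lies inside $L(\Gamma_t)$, outside the region where the comparison is applied.
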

For each $t\ge t_1$, $w\mapsto\varphi(w)-e^{a(q)(t-t_0)}$ is a defining function for the domain~$\ComplexE\setminus\overline{L(D_t)}$. The latter means that it is defined in a neigbourhood of its boundary, vanishes on the boundary itself, takes negative values in the domain and positive ones in its exterior. The fact that it is subharmonic helps us to derive a lower estimate for the derivative of the conformal mapping $\tilde g_t:\Delta\to \ComplexE\setminus\overline{L(D_t)}$, $t>t_1$, on the boundary. This is Step\,1 in the proof of Theorem~\ref{TH_MAIN}, which we give in Sect.\,\ref{SS_main-proof}.

Furthermore, a somewhat similar argument, borrowed from~\cite{Range1976}, is used in Step\,2 to give an upper estimate for~$|\tilde g'_t|$.

In Step\,3, we apply Proposition~\ref{PR_C2-homotopy} to show that~$(\tilde g_t)_{t>t_1}$  satisfies the Loewner\,--\,Kufarev PDE in~$\Delta$, with the Herglotz function~$p$ continuous on~$\overline\Delta$ for each fixed~$t>t_1$. Formula~\eqref{EQ_Re-p-on-boundary} allows us to find upper and lower bounds for~$\Re p$.

In Step~4, we estimate the modulus of continuity of $\Re p$ on~$\partial\Delta$. Using the Hilbert transform on~$\partial\Delta$ we conclude that the values of $p$ lie in some compact set $X(q)\subset\UH:=\{w:\Re w>0\}$ depending only on~$q$, which is equivalent to the conclusion of the theorem.

\subsection{Estimates for the partial derivatives}\label{SS-lemmas} Keeping the notation introduced above, we establish a few estimates, which will be used in the proofs of Proposition~\ref{PR_subharmonic} and Theorem~\ref{TH_MAIN}.
\begin{lemma}\label{LM_t_star}
 For any $t\in[0,t_*)$, $t_*:=-\log(3q)$, we have $0\in D_t$.
\end{lemma}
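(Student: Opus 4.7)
The condition $0\in D_t$ is equivalent to $|G^{-1}(0)|<e^{-t}$, because $G$ is a $3q$-quasiconformal homeomorphism of $\ComplexE$ that carries $\{|z|\le e^{-t}\}$ onto $\overline{D_t}$. Since this has to hold for every $t\in[0,t_*)$ and $\inf_{t<t_*}e^{-t}=e^{-t_*}=3q$, the lemma reduces to the bound
\[
|G^{-1}(0)|\le 3q.
\]
On $\Delta$ one has $G=g_0=1/f(1/\cdot)$, and the Koebe-type inequality $|f(\zeta)|<4^{Q-1}$ with $Q=(1+q)/(1-q)$ (recorded in Sect.~\ref{SS_main-construction}) forces $|g_0(w)|\ge 4^{-(Q-1)}>0$ on $\{|w|\ge 1\}$. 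Hence $G^{-1}(0)\in\UD$ and the search can be restricted to the unit disk.

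Inside $\UD$, reading off \eqref{EQ_g_t-explicit} with $w=z/|z|$ and $t=-\log|z|$ gives
\[
\frac1{G(z)}=f(\bar z)+\frac{(1-|z|^2)\,f'(\bar z)}{z-\tfrac12(1-|z|^2)\,P_f(\bar z)},\qquad 0<|z|<1.
\]
Since $f'$ never vanishes in $\UD$, the equation $G(z)=0$ is equivalent to the vanishing of the fraction's denominator, namely $z=\tfrac12(1-|z|^2)P_f(\bar z)$. Setting $\zeta:=\bar z$ and applying the chain rule for the pre-Schwarzian derivative at $0$ to the M\"obius pre-composition $f\circ\phi_\zeta^{-1}$, where $\phi_\zeta$ is the disk automorphism sending $\zeta$ to $0$, one finds
\[
(1-|\zeta|^2)\,P_f(\zeta)-2\bar\zeta=2\,a_2(F_\zeta),\qquad F_\zeta(w):=\frac{f(\phi_\zeta^{-1}(w))-f(\zeta)}{(1-|\zeta|^2)\,f'(\zeta)}\in\clS.
\]
Thus the vanishing condition for $G$ becomes simply $a_2(F_\zeta)=0$.

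The crux is to show that $a_2(F_\zeta)=0$ forces $|\zeta|\le 3q$. I would combine two facts: the sharp bound $|a_2(f)|\le 2q$ on $\clS_q$ (which controls $|a_2(F_0)|$), and the Schwarzian estimate \eqref{EQ_Schw-estimate} in its M\"obius-invariant form $|S_{F_\zeta}(0)|=(1-|\zeta|^2)^2|S_f(\zeta)|\le 6q$; the latter, specialized under $a_2(F_\zeta)=0$, yields $|a_3(F_\zeta)|\le q$ via the identity $S_{F_\zeta}(0)=6\bigl(a_3(F_\zeta)-a_2(F_\zeta)^2\bigr)$. The non-holomorphic map
\[
\psi(\zeta):=a_2(F_\zeta)=\tfrac12(1-|\zeta|^2)P_f(\zeta)-\bar\zeta
\]
is, in this sense, an ``$\clS_q$-controlled'' perturbation of the linear antiholomorphic map $\zeta\mapsto-\bar\zeta+a_2(f)$, which has its unique zero at $\overline{a_2(f)}$ of modulus at most $2q$. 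Explicit bounds on $\partial_\zeta\psi$ and $\partial_{\bar\zeta}\psi$ (from the classical pre-Schwarzian estimate together with \eqref{EQ_Schw-estimate}) should then trap any zero of $\psi$ inside the disk $\{|\zeta|\le 3q\}$.

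The main obstacle is precisely this last step: teasing out the precise constant $3q$ from the Schwarzian bound on $f$ together with $|a_2(f)|\le 2q$, rather than a weaker $O(q)$ estimate. For small $q$ the perturbation argument is immediate because $\partial_{\bar\zeta}\psi(0)=-1$ and $|\partial_\zeta\psi(0)|=|3a_3(f)-2a_2(f)^2|\le 3q+O(q^2)$, so $\psi$ is nearly a contraction of the antiholomorphic variable with small holomorphic correction; achieving the bound uniformly up to $q$ close to $1/3$ is the delicate point, and I expect it to rely on combining the radial monotonicity of $(1-|\zeta|^2)^2|S_f(\zeta)|$ with the Lehto-type majorant principle available on $\clS_q$ (cf.\ Remark~\ref{RM_infinity}).
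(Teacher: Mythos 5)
Your opening reduction is correct and essentially reproduces the paper's first step: $0\in D_t$ for all $t<t_*$ amounts to $|G^{-1}(0)|\le 3q$, the zero of $G$ must lie in $\UD$, and by \eqref{EQ_g_t-explicit} a zero $z\in\UD$ of $G$ is exactly a solution of
\begin{equation*}
z=\tfrac12\,(1-|z|^2)\,P_f(\bar z),
\end{equation*}
which is the same equation the paper arrives at (there written on the circle $|z|=e^{-\tau}$ for the first exit time $\tau$). The genuine gap is in what follows: the whole content of the lemma is to get the constant $3q$ out of this equation, and you explicitly leave that step open (``the main obstacle is precisely this last step''). The detour through $a_2(F_\zeta)=0$, the bound $|a_2(f)|\le 2q$, the identity $S_{F_\zeta}(0)=6\bigl(a_3(F_\zeta)-a_2(F_\zeta)^2\bigr)$ and a perturbation of $\zeta\mapsto-\bar\zeta+a_2(f)$ is not carried out, and it is doubtful it can be pushed to the sharp constant uniformly for $q$ up to $1/3$; note also that if you intended to exploit invariance of the class under Koebe transforms, $F_\zeta$ need \emph{not} lie in $\clS_q$ in the sense used here, since M\"obius precomposition destroys the normalization $F(\infty)=\infty$ of the extension (cf.\ Remark~\ref{RM_infinity}), so estimates valid on $\clS_q$ cannot be applied to $F_\zeta$ directly.

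What closes the argument in one line — and is what the paper does — is the $q$-refined pre-Schwarzian bound: the classical estimate $|P_\varphi(\zeta)|\le 6/(1-|\zeta|^2)$ on $\clS$ together with Lehto's Majorant Principle~\cite{Lehto_Majorant} (the functional $f\mapsto P_f(\zeta)$ vanishes on the identity, and this is where $f\in\clS_q$, i.e.\ extendibility to $\Complex$ fixing $\infty$, is used) gives $|P_f(\zeta)|\le 6q/(1-|\zeta|^2)$ for all $\zeta\in\UD$. Plugging this into the displayed equation yields
\begin{equation*}
|z|=\tfrac12\,(1-|z|^2)\,|P_f(\bar z)|\le\tfrac12\,(1-|z|^2)\,\frac{6q}{1-|z|^2}=3q,
\end{equation*}
which is exactly the bound you needed; no information about $a_2$ or the Schwarzian enters at all. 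So the proposal shares the paper's reduction but is missing the key estimate that produces $t_*=-\log(3q)$.
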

\begin{proof}
Let $\tau$ be the smallest $t>0$ for which $0\not\in D_t$. Then $g_\tau(w)=0$ for some $w\in\partial\Delta$.  Using formula~\eqref{EQ_g_t-explicit}, we see that
\begin{equation}\label{EQ_et}
  e^{-\tau}w=\frac{1}{2}(1-e^{-2\tau})P_f(e^{-\tau}/w).
\end{equation}
The estimate $|P_\varphi(z)|\le 6/(1-|z|^2)$, $z\in\UD$, holds for all ${\varphi\in\clS}$, see e.g. \cite[Theorem~2.4 on p.\,32]{Duren}. Since $f\in\clS_q$, thanks to Lehto's Majorant Principle~\cite{Lehto_Majorant} we have ${|P_f(z)|\le 6q/(1-|z|^2)}$ for all ${z\in\UD}$. In combination with~\eqref{EQ_et} this yields the desired conclusion that ${\tau\ge t_*}$.
\end{proof}

Let us now choose $t_0:=t_*/2=-\frac12\log(3q)$.
Denote by $\partial$ and $\bar \partial$ the formal partial derivatives w.r.t.~$z$ and~$\bar z$, respectively:
$$
\partial:=\frac12\Big(\frac{\partial}{\partial x}-i\frac{\partial}{\partial x}\Big),\quad \bar\partial:=\frac12\Big(\frac{\partial}{\partial x}+i\frac{\partial}{\partial x}\Big).
$$

\begin{lemma}\label{LM_dF}
Let $F(z):=G(e^{-t_0}z)$. For any $z\in\overline\UD$, we have
\begin{equation}\label{EQ_dF}
 e^{-t_0}\frac{(1-k)^q}{(1+k^2)^2}~\le~\big|\partial F(z)\big|~\le~\frac{e^{-t_0}}{(1-k^2)^2\,(1-k)^q},\quad k:=3q.
\end{equation}
Moreover, the directional derivatives $\mathsf D_{\alpha\!}\, F(z):=\frac{\di}{\di s} F(z+se^{i\alpha})|_{s=0}$ and the Jacobian determinant~$J_f$ of~$F$ satisfy for all $z\in\overline{\UD}$ and all $\alpha\in\Real$ the following inequalities:
\begin{eqnarray}
\nonumber
 e^{-t_0}\frac{(1-k)^{1+q}}{(1+k^2)^2}&\le& \mathsf D_{*\,}F(z):=\min_{\alpha\in\Real}\big|\mathsf D_{\alpha\!}\, F(z)\big|\hphantom{~\le~} \\
\label{EQ_DF} &\le&\mathsf D^{*}F(z):=\max_{\alpha\in\Real}\big|\mathsf D_{\alpha\!}\, F(z)\big|~\le~ \frac{e^{-t_0}}{(1-k^2)(1-k)^{1+q}},
\end{eqnarray}
\begin{equation}\label{EQ_Jacobian}
  \frac{k(1-k^2)(1-k)^{2q}}{(1+k^2)^4}~\le~\big|J_F(z)\big|~\le~ \frac{k}{(1-k^2)^4\,(1-k)^{2q}}.
\end{equation}
\end{lemma}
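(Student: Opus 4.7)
The plan is to first reduce \eqref{EQ_DF} and \eqref{EQ_Jacobian} to the two-sided bound~\eqref{EQ_dF} on $|\partial F|$ alone, by exploiting the Beltrami coefficient of $G$, and then to extract~\eqref{EQ_dF} itself from the Ahlfors--Weill formula~\eqref{EQ_g_t-explicit}. Setting $w=z/|z|$ and $t=-\log|z|$ in~\eqref{EQ_g_t-explicit}, I would first rewrite the extension as
\[
1/G(z)=f(\bar z)+\frac{(1-|z|^{2})f'(\bar z)}{B(z)},\qquad B(z):=z-\tfrac12(1-|z|^{2})P_f(\bar z),
\]
which is valid for $z\in\overline\UD\setminus\{0\}$ and extends smoothly across $z=0$ by Remark~\ref{RM_real-analytic}. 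Differentiating formally in $z$ and $\bar z$ treated as independent, and using the identity $2P_f'=P_f^{2}+2S_f$, a direct computation yields the Beltrami coefficient $\mu_G(z)=-\tfrac12(1-|z|^{2})^{2}S_f(\bar z)$. The K\"uhnau estimate~\eqref{EQ_Schw-estimate} then gives $|\mu_G(z)|\le 3q=k$ on $\UD$, and since the chain rule preserves this bound under $F=G\circ(e^{-t_0}\,\cdot\,)$, the identities $\mathsf D^{*}F=(1+|\mu_G|)|\partial F|$, $\mathsf D_{*}F=(1-|\mu_G|)|\partial F|$, $J_F=(1-|\mu_G|^{2})|\partial F|^{2}$ reduce \eqref{EQ_DF} and \eqref{EQ_Jacobian} to~\eqref{EQ_dF}.

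To derive~\eqref{EQ_dF}, the same computation gives $|\partial F(z)|=e^{-t_0}|f'(\bar\zeta)|\,|h(\zeta)B(\zeta)|^{-2}$ with $\zeta:=e^{-t_0}z$, $|\zeta|\le e^{-t_0}=\sqrt k$, and $h:=1/G$. I would then rewrite, using $\phi:=f/f'$ and the identity $\phi'=1-ff''/(f')^{2}$,
\[
h(\zeta)B(\zeta)=f'(\bar\zeta)\Bigl[\zeta\,\phi(\bar\zeta)+\tfrac12(1-|\zeta|^{2})\bigl(1+\phi'(\bar\zeta)\bigr)\Bigr],
\]
which equals $f'(\bar\zeta)$ when $f=\mathrm{id}$ (since then $\phi(w)=w$, $\phi'\equiv1$). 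For $f\in\clS_q$, the plan is to apply Lehto's Majorant Principle to the continuous $\clS$-functionals $f\mapsto\phi(\bar\zeta)-\bar\zeta$, $\phi'(\bar\zeta)-1$, $\log f'(\bar\zeta)$ and $P_f(\bar\zeta)$, obtaining bounds of order $q$ times quantities depending only on $|\bar\zeta|\le\sqrt k$; in particular, the well-known estimate $|P_f(\bar\zeta)|\le 6q/(1-|\bar\zeta|^{2})$ reappears this way. Combining these with $1-|\zeta|^{2}\ge 1-k$ and regrouping should then yield~\eqref{EQ_dF}.

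The main difficulty I expect is producing the precise constants $(1-k)^{q}$ and $(1+k^{2})^{2}$ in~\eqref{EQ_dF}, rather than the weaker $(1-k)^{k}$-type substitutes that fall out of crude estimates. This calls for the sharp form of the Lehto-type pointwise bound on $\log f'(\bar\zeta)$ (producing an exponent $q$, not the $3q=k$ that comes from merely integrating $|P_f|\le 6q/(1-|w|^{2})$ along a radius) and, more delicately, for a uniform lower bound on $|h(\zeta)B(\zeta)|$ even at points $|\zeta|\approx k$ where $B(\zeta)$ itself may nearly vanish. The representation of $hB$ displayed above makes the latter tractable: the \emph{identity part} $\zeta\bar\zeta+(1-|\zeta|^{2})=1$ survives, and it suffices to control the $O(q)$ perturbation inherited from the $\clS_q$-class.
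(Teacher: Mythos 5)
Your first half is fine and is essentially the paper's own reduction: the Beltrami coefficient of the Ahlfors--Weill map is indeed $\mu_G(z)=-\tfrac12(1-|z|^2)^2S_f(\bar z)$, so \eqref{EQ_Schw-estimate} gives $|\bar\partial F|\le k|\partial F|$, and together with $\mathsf D_*F=(1-|\mu|)|\partial F|$, $\mathsf D^*F=(1+|\mu|)|\partial F|$, $J_F=(1-|\mu|^2)|\partial F|^2$ and $e^{-2t_0}=k$ this correctly reduces \eqref{EQ_DF} and \eqref{EQ_Jacobian} to \eqref{EQ_dF}. Your identity $|\partial F(z)|=e^{-t_0}|f'(\bar\zeta)|\,|h(\zeta)B(\zeta)|^{-2}$ and the factorization $h B=f'(\bar\zeta)\bigl[\zeta\phi(\bar\zeta)+\tfrac12(1-|\zeta|^2)(1+\phi'(\bar\zeta))\bigr]$ are also correct (they are equivalent to \eqref{EQ_dG}).

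The gap is that \eqref{EQ_dF} itself is never proved, and the route you sketch for it cannot produce the stated constants. Writing $A(\zeta):=\zeta\phi(\bar\zeta)+\tfrac12(1-|\zeta|^2)(1+\phi'(\bar\zeta))$, you must show $(1-k^2)^2(1-k)^{q}\le |f'(\bar\zeta)|\,|A(\zeta)|^2\le (1+k^2)^2(1-k)^{-q}$, a window of width $O(q^2)$ around $1$. But estimating $|f'(\bar\zeta)|$ and $|A(\zeta)|$ \emph{separately} by Lehto-type bounds on the functionals $\log f'$, $\phi-\bar\zeta$, $\phi'-1$ destroys an essential cancellation: at $|\zeta|=\sqrt{k}$, for $f\in\clS_q$ with $a_2\approx 2q$ one has $|f'(\bar\zeta)|\approx 1+2\Re(a_2\bar\zeta)=1+O(q^{3/2})$ while $|A(\zeta)|^2\approx 1-2\Re(a_2\bar\zeta)$, so each factor individually strays from $1$ by order $q^{3/2}$ and no product of separate two-sided bounds can stay inside a $1\pm O(q^{2})$ window; a sharper bound on $\log f'$ alone (your proposed fix) does not help. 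The cancellation is captured only by regrouping the same product as the paper does: pass to $g_0(w)=1/f(1/w)\in\Sigma(q)$, note $\partial G(z)=g_0'(w)\bigl(1+\tfrac12(w-1/\bar w)P_{g_0}(w)\bigr)^{-2}$ with $w=e^{t_0}/\bar z$, and apply K\"uhnau's two-sided estimate \eqref{EQ_Kuhnau69} (Satz~4 of \cite{Kuhnau69}, for the class $\Sigma(q)$ -- exponent $q$, not $k$) to the factor $g_0'(w)$, and the $\Sigma(q)$ pre-Schwarzian bound \eqref{EQ_preScw-in-Sigma_q} to the denominator; since $|w|\ge e^{t_0}=1/\sqrt{k}$ these give exactly $(1-k)^{\pm q}$ and $1\pm k^2$, hence \eqref{EQ_dF}. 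Without this (or an equivalent bound on the \emph{combined} functional $\log\bigl(f'(\bar\zeta)A(\zeta)^2\bigr)$, whose sharp majorant over $\clS$ you would still have to establish), the central inequality of the lemma remains unproven.
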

\begin{proof}
Since $F$ is real-analytic in~$\overline\UD$, see Remark~\ref{RM_infinity}, it is sufficient to establish the estimates for $z\in\UD\setminus\{0\}$. Recall that $g_0(z)=1/f(1/z)$ belongs to~$\Sigma(q)$. Therefore,
\begin{align}
\label{EQ_Kuhnau69}
  &\big|\log g_0'(w)\big|\le q\log\frac{|w|^2}{|w|^2-1}\qquad\text{and}\\[.5ex]
\label{EQ_preScw-in-Sigma_q}
  &\big|wP_{\!g_0\,}\!(w)\big|\le 6q/(|w|^2-1) \quad\text{for all $w\in\Delta$.}
\end{align}
 Inequality~\eqref{EQ_Kuhnau69} is due to K\"uhnau~\cite[Satz~4]{Kuhnau69}. Inequality~\eqref{EQ_preScw-in-Sigma_q} can be obtained with the help of Lehto's Majorant Principle~\cite{Lehto_Majorant} from the simple estimate $\big|wP_{g}(w)\big|\le {6/(|w|^2-1)}$ valid in the whole class~$\Sigma$, which in turn follows from a more precise result due to Goluzion, see \cite{Goluzin_est} or \cite[Theorem~4 in~\S IV.3]{Goluzin}.

From~\eqref{EQ_explicit-zeta-zetabar} we obtain
\begin{equation}\label{EQ_dG}
\partial G(z)=\frac{g_0'(1/\bar{z})}{\big(1-\tfrac12(z-1/\bar z)P_{g_0}(1/\bar z)\big)^2}\quad\text{for all~$z\in\UD\setminus\{0\}$}.
\end{equation}
Recall that $\partial F(z)=e^{-t_0}\partial G(e^{-t_0}z)$.
So we replace~$z$ in~\eqref{EQ_dG} by~$e^{-t_0}z$ and apply~\eqref{EQ_Kuhnau69} with ${w:=e^{t_0}/\bar{z}}$. Taking into account that $|w|\ge e^{t_0}=\sqrt{k}$, we see that the absolute value of the numerator in~\eqref{EQ_dG} is contained between ${(1-k)^q}$ and ${(1-k)^{-q}}$.
Similarly, inequality~\eqref{EQ_preScw-in-Sigma_q} implies that the absolute value of the denominator in~\eqref{EQ_dG} is between $(1-k^2)^2$ and $(1+k^2)^2$.
This proves~\eqref{EQ_dF}.

Since $F$ is a smooth $k$-q.c. mapping of~$\UD$, $|\bar\partial F(z)|\le k|\partial F(z)|$ for all $z\in\UD$.  Therefore, to obtain \eqref{EQ_DF} and~\eqref{EQ_Jacobian} it remains to notice that
$$
|\partial F|-|\bar\partial F| \le \big|\mathsf D_\alpha\, F\big| \le |\partial F|+|\bar\partial F|\quad\text{and}\quad J_F=|\partial F|^2-|\bar\partial F|^2.
$$
The proof is complete.
\end{proof}

\begin{lemma}\label{LM_est-second-formal}
There exists a constant $M=M(q)$ depending only on~$q\in(0,1/3)$ such that
\begin{equation}\label{EQ_est-second-formal}
  \max\big\{|\partial^2 F(z)|,\,|\bar\partial^2 F(z)|,\,|\partial\bar\partial F(z)|\big\}\le M(q)|\partial F(z)|\quad\text{for all~$z\in\overline{\UD}$},
\end{equation}
where the map~$F$ is defined in Lemma~\ref{LM_dF}. In particular,
\begin{equation}\label{EQ_est-derivative-of-Jacobian}
  |\partial J_F|=|\bar\partial J_F|\le 4M(q)|\partial F|^2\quad \quad\text{for all~$z\in\overline{\UD}$}.
\end{equation}
\end{lemma}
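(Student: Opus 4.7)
I would prove the lemma by direct differentiation of the Ahlfors--Weill formula~\eqref{EQ_explicit-zeta-zetabar} for $G$, combined with classical distortion bounds for the class $\Sigma(q)$. The quantities that appear after repeated differentiation are $g_0'(w)$, $P_{g_0}(w)$, and $P_{g_0}'(w)$, all evaluated at $w:=e^{t_0}/\bar z$; for $z\in\overline\UD$ we have $|w|\ge e^{t_0}=1/\sqrt{k}$, and hence $|w|^2-1\ge(1-k)/k>0$. For $g_0'$ and $P_{g_0}$ the needed bounds are recorded in~\eqref{EQ_Kuhnau69} and~\eqref{EQ_preScw-in-Sigma_q}. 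For $P_{g_0}'$ I would combine the K\"uhnau Schwarzian bound $(|w|^2-1)^2|S_{g_0}(w)|\le 6q$ for $\Sigma(q)$ (which follows from~\eqref{EQ_Schw-estimate} via the composition rule applied to $g_0(w)=1/f(1/w)$) with the identity $P_{g_0}'=S_{g_0}+\tfrac12 P_{g_0}^2$. On the set $\{|w|\ge 1/\sqrt{k}\}$ each of the three quantities is thereby bounded by an explicit expression in $k\in(0,1)$.

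Setting $\zeta:=e^{-t_0}z$, $w:=1/\bar\zeta$, $s:=1-|\zeta|^2$, and $B:=1-\tfrac12(\zeta-w)P_{g_0}(w)$, formula~\eqref{EQ_dG} gives $\partial F(z)=e^{-t_0}g_0'(w)/B^2$. Differentiating~\eqref{EQ_explicit-zeta-zetabar} further using $\partial w=0$, $\bar\partial w=-e^{t_0}/\bar z^2$, $\partial s=-e^{-2t_0}\bar z$, $\bar\partial\partial s=-e^{-2t_0}$ and similar elementary identities, one expresses $\bar\partial F,\partial^2F,\bar\partial^2F,\partial\bar\partial F$ as finite rational expressions whose denominators are powers of $B$ and whose numerators are polynomials in $g_0'(w),P_{g_0}(w),P_{g_0}'(w),z,\bar z$. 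Combined with the lower bound $|B|\ge 1-k^2$ (already used in the proof of Lemma~\ref{LM_dF}) and the lower bound $|\partial F|\ge e^{-t_0}(1-k)^q/(1+k^2)^2$ from the same lemma, the estimates of the previous paragraph yield a bound on each of $|\partial^2F|/|\partial F|$, $|\bar\partial^2F|/|\partial F|$, $|\partial\bar\partial F|/|\partial F|$ by an explicit function of $k=3q$, giving~\eqref{EQ_est-second-formal}.

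For~\eqref{EQ_est-derivative-of-Jacobian} I would write $J_F=\partial F\cdot\overline{\partial F}-\bar\partial F\cdot\overline{\bar\partial F}$ and use the identities $\partial\,\overline{\partial F}=\overline{\partial\bar\partial F}$ and $\partial\,\overline{\bar\partial F}=\overline{\bar\partial^2 F}$ to obtain
$$\partial J_F=\partial^2 F\cdot\overline{\partial F}+\partial F\cdot\overline{\partial\bar\partial F}-\partial\bar\partial F\cdot\overline{\bar\partial F}-\bar\partial F\cdot\overline{\bar\partial^2 F}.$$
Bounding each of the four summands via~\eqref{EQ_est-second-formal} and the q.c.\ bound $|\bar\partial F|\le k|\partial F|$ gives $|\partial J_F|\le 4M(q)|\partial F|^2$; and $|\bar\partial J_F|=|\partial J_F|$ because $J_F$ is real-valued. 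The main obstacle I anticipate is the algebraic bookkeeping in the second paragraph: repeated use of the chain and quotient rules generates a sizeable collection of terms, each of which must be bounded individually with no cancellation relied upon. Potential issues at $z=0$ (where $|w|=\infty$) and on $\partial\UD$ are benign, since $F=G\circ(e^{-t_0}\cdot)$ is real-analytic on $\{|z|<e^{t_0}\}\supset\overline\UD$ by Remark~\ref{RM_real-analytic} and the formula~\eqref{EQ_explicit-zeta-zetabar}, so the desired uniform bounds extend to $\overline\UD$ by continuity.
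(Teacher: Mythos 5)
Your overall strategy (differentiate the Ahlfors--Weill formula~\eqref{EQ_explicit-zeta-zetabar} directly and control everything by class bounds for $\Sigma(q)$) is the same as the paper's, and your treatment of $\partial^2F$, $\partial\bar\partial F$, of the Jacobian identity for $\partial J_F$, and of the points $z=0$ and $\partial\UD$ is correct. However, there is a genuine gap in the inventory of quantities you claim appear after two differentiations. For $\bar\partial^2F$ it is \emph{not} true that the numerators are polynomials in $g_0'(w)$, $P_{g_0}(w)$, $P_{g_0}'(w)$, $z$, $\bar z$ only: since $\bar\partial G$ is proportional to $S_{g_0}(w)$ (see~\eqref{EQ_dbarG}), a further $\bar\partial$ produces the term $S_{g_0}'(w)$, equivalently $P_{g_0}''(w)$, and this term does not cancel (cf.~\eqref{EQ_dbar-log-dbar}, where the logarithmic derivative $S_{g_0}'/S_{g_0}$ appears explicitly). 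Your identity $P_{g_0}'=S_{g_0}+\tfrac12P_{g_0}^2$ handles $g_0'''$ and $P_{g_0}'$, but the bounds \eqref{EQ_Kuhnau69}, \eqref{EQ_preScw-in-Sigma_q} and the Schwarzian bound~\eqref{EQ_Scw-in-Sigma_q} give no pointwise control of $S_{g_0}'$; a bound on a holomorphic function does not bound its derivative at the same point. So the estimate of $|\bar\partial^2F|/|\partial F|$ --- precisely the step the paper calls ``a bit more tricky'' --- is missing an ingredient.

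The repair is exactly what the paper does: write the offending term via $w^2\frac{\di}{\di w}\log\big(w^4S_{g_0}(w)\big)=-S_f'(1/w)\big/\big(w^4S_{g_0}(w)\big)$ as in~\eqref{EQ_Schw-trick}, note that the factor $S_{g_0}(w)$ in the denominator is cancelled by $\bar\partial F/\partial F=-\tfrac12(w-1/\bar w)^2w^2S_{g_0}(w)$ (so no trouble at zeros of $S_{g_0}$), and then bound $S_f'$ by a Cauchy estimate on a disk of radius $(1-\sqrt{k})/2$ about $1/w$ using~\eqref{EQ_Schw-estimate}, as in~\eqref{EQ_est-der-of-Schw}; alternatively one can bound $|S_{g_0}'(w)|$ for $|w|\ge1/\sqrt{k}$ directly by a Cauchy estimate from~\eqref{EQ_Scw-in-Sigma_q}, provided one also tracks the powers of $w$ so that the resulting expression stays bounded as $w\to\infty$ (i.e.\ $z\to0$). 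Without some such derivative estimate for the Schwarzian, your scheme bounds only $\partial^2F$ and $\partial\bar\partial F$, not $\bar\partial^2F$, and~\eqref{EQ_est-second-formal} does not follow.
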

\begin{proof}
As in the proof of Lemma~\ref{LM_dF}, it is sufficient to establish the estimates for ${z\in\UD\setminus\{0\}}$. Denote $w:=e^{t_0}/\bar z$.
Note that
by~\eqref{EQ_dF}, $\partial F$ does not vanish in~$\overline\UD$.
Hence, using equality~\eqref{EQ_dG} we obtain
$$
  \frac{\,|\partial^2 F(z)|\,}{|\partial F(z)|}=\big|\partial\log \partial F(z)\big|= e^{-t_0}\big|\partial\log \partial G(1/\bar w)\big|
=\frac{e^{-t_0}|P_{g_0}( w)|}{\Big|1+\tfrac12(w-1/\bar w)P_{g_0}(w)\Big|}.
$$
Taking into account that $|w|\ge e^{t_0}=1/\sqrt{k}$, one can use~\eqref{EQ_preScw-in-Sigma_q} to see that the denominator in the last expression is greater or equal to~$1-k^2$, while the numerator $e^{-t_0}|P_{g_0}(w)|\le2e^{-t_0}k|w|^{-1}(|w|^2-1)^{-1}\le 2k^3/(1-k)$.  This leads to
\begin{equation}\label{EQ_e1}
 \frac{\,|\partial^2 F(z)|\,}{|\partial F(z)|}\le \frac{2k^3}{(1+k)(1-k)^2}.
\end{equation}

Similarly,
\begin{equation}\label{EQ_dbar-log-d}
\frac{\,|\bar\partial\partial F(z)|\,}{|\partial F(z)|}= e^{-t_0}\big|\bar\partial\log \partial G(1/\bar w)\big|
=e^{-t_0}|w|\frac{(|w|^2-1)\big|S_{g_0}(w)\big|}{\Big|1+\tfrac12(w-1/\bar w)P_{g_0}(w)\Big|}.
\end{equation}
Taking into account that $S_{\!g_0\,}\!(w)=w^{-4}S_f(1/w)$, by~\eqref{EQ_Schw-estimate} we have
\begin{equation}
\label{EQ_Scw-in-Sigma_q}
  \big|S_{\!g_0\,}\!(w)|\le 6q/(|w|^2-1)^2=2k/(|w|^2-1)^2\quad\text{for all $w\in\Delta$.}
\end{equation}
As above, the denominator in~\eqref{EQ_dbar-log-d} is separated from zero by~$1-k^2$, while the numerator can be estimated with the help of~\eqref{EQ_Scw-in-Sigma_q}. In this way we obtain
\begin{equation}\label{EQ_e2}
 \frac{\,|\partial\bar\partial F(z)|\,}{|\partial F(z)|}=\frac{\,|\bar\partial\partial F(z)|\,}{|\partial F(z)|}\le \frac{2k^2}{(1+k)(1-k)^2}.
\end{equation}

\medskip

The estimate of $|\bar\partial^2 F(z)|/|\partial F(z)|$ is a bit more tricky.
Using~\eqref{EQ_explicit-zeta-zetabar}, we find that
\begin{equation}\label{EQ_dbarG}
\bar\partial G(1/\bar w)=-\frac{\,\tfrac12(w-1/\bar{w})^2w^2S_{g_0}(w)g_0'(w)}{\big(1+\tfrac12(w-1/\bar w)P_{g_0}(w)\big)^2}.
\end{equation}
If $S_{g_0}\equiv0$ in~$\Delta$, then $G$ is a Moebius transformation and hence $\bar\partial^2 F(z)\equiv0$ in~$\UD$. Therefore, we may suppose that $S_{g_0}$ does not vanish identically.
From \eqref{EQ_dbarG} it follows by a simple calculation that for all $w\in\Delta$ with $S_{g_0}(w)\neq0$,
\begin{multline}\label{EQ_dbar-log-dbar}
Q(w):=\bar\partial\log \bar\partial G(1/\bar w)+w^2\frac{\di}{\di w}\log\big(w^4S_{g_0}(w)\big)=\\=-\frac{2w}{|w|^2-1}+w^2\frac{(w-1/\bar w)S_{g_0}(w)}{1+\tfrac12(w-1/\bar w)P_{g_0}(w)}.
\end{multline}
Using the same technique as above, it is not difficult to see that
\begin{equation}\label{EQ_estforQ}
  |e^{-t_0}Q(w)|\le\frac{2k}{1-k}+\frac{2k^2}{(1+k)(1-k)^2}\le\frac{2k}{(1-k)^2}.
\end{equation}
Furthermore,
\begin{equation}\label{EQ_Schw-trick}
w^2\frac{\di}{\di w}\log\big(w^4S_{g_0}(w)\big)=w^2\frac{\di}{\di w}\log S_f(1/w) =-\frac{S_f'(1/w)}{S_f(1/w)}=-\frac{S_f'(1/w)}{w^4S_{g_0}(w)}.
\end{equation}
Recall that $|w|\ge e^{t_0}=1/\sqrt{k}$ and apply the Cauchy estimate for the derivative of $S_f$ in the disk of radius $(1-\sqrt{k})/2$ centered at the point~$1/w$. By~\eqref{EQ_Schw-estimate}, for all $\zeta\in\partial D$ we have $|S_f(\zeta)|\le 2k/(1-|\zeta|^2)^2\le 2k/\big(1-\tfrac14(1+\sqrt{k})^2\big)^2$.
Hence,
\begin{equation}\label{EQ_est-der-of-Schw}
  |S'_f(1/w)|\le\frac{4k}{(1-\sqrt{k})\big(1-\tfrac14(1+\sqrt{k})^2\big)^2}\le\frac{16k}{(1-k)^3}.
\end{equation}
From~\eqref{EQ_dG} and~\eqref{EQ_dbarG} we get immediately that
$$
\frac{\,\bar\partial F(z)\,}{\partial F(z)}=-\tfrac12(w-1/\bar{w})^2w^2S_{g_0}(w).
$$
Combining this equality with~\eqref{EQ_Scw-in-Sigma_q}, \eqref{EQ_Schw-trick} and~\eqref{EQ_est-der-of-Schw}, we see that
$$
  \frac{\,|\bar\partial F(z)|\,}{|\partial F(z)|}\le k ~\text{~and~}\left|w^2\frac{\di}{\di w}\log\big(w^4S_{g_0}(w)\big)\right|\cdot\frac{\,|\bar\partial F(z)|\,}{|\partial F(z)|}\,\le\,\frac{8k}{1-k}.
$$
Taking into account~\eqref{EQ_estforQ}, it follows that
\begin{eqnarray}\nonumber
\frac{\,|\bar\partial^2 F(z)|\,}{|\partial F(z)|}
&=&
e^{-t_0}\big|\bar\partial\log \bar\partial G(1/\bar w)\big|\cdot\frac{\,|\bar\partial F(z)|\,}{|\partial F(z)|}\\[.75ex]
\nonumber
&\le&%
|e^{-t_0}Q(w)|\cdot\frac{\,|\bar\partial F(z)|\,}{|\partial F(z)|}~+~e^{-t_0}\left|w^2\frac{\di}{\di w}\log\big(w^4S_{g_0}(w)\big)\right|\cdot\frac{\,|\bar\partial F(z)|\,}{|\partial F(z)|}\\[.75ex]
\label{EQ_e3}
&\le&%
\frac{2k^2}{(1-k)^2}+\frac{8k^{3/2}}{1-k}=:M(q).
\end{eqnarray}
The last inequality is obtained for all $z\in\UD$, $z\neq0$, such that $S_{g_0}(e^{t_0}/\bar z)\neq0$. Since $S_{g_0}$ is holomorphic, its zeros are isolated and hence \eqref{EQ_e3}  holds everywhere in~$\overline{\UD}$.

\medskip

Inequalities~\eqref{EQ_e1}, \eqref{EQ_e2}, and~\eqref{EQ_e3} imply~\eqref{EQ_est-second-formal}, which in turn implies~\eqref{EQ_est-derivative-of-Jacobian} since
$$
 |\bar\partial F|\le k|\partial F|
~\text{~and~}~
 \bar\partial J_f=\overline{\partial J_f}=\partial\bar\partial F\,\overline{\partial F}\,+\,
 \overline{\partial^2 F}\,\partial F\,-
 \,\bar\partial^2 F\,\overline{\bar\partial F}\,-\,\overline{\partial\bar\partial F}\,\bar\partial F,
$$
where the  equality $\bar\partial J_f=\overline{\partial J_f}$ holds because $J_F$ is real-valued.
\end{proof}

\subsection{Proof of Proposition~\ref{PR_subharmonic}}\label{SS_proof-of-proposition-subharmonic} Referring to the construction explained in Sect.\,\ref{SS_main-construction}, we start by making an appropriate choice of $t_1>t_0$ and $T$. Let us recall that by Lemma~\ref{LM_t_star}, $0\in D_t$ for all $t\in(0,t_*)$, where $t_*=-\log(3q)$. Recall also that we fixed $t_0:=t_*/2$. Therefore, for $z_0:=e^{t_0}G^{-1}(0)$ we have
\begin{equation}\label{EQ_z_0-est}
  |z_0|\le e^{-t_0}\le\sqrt{3q}.
\end{equation}

Since $L(-a_2(f))=0$, the Moebius transformation $T$ should satisfy $T(0)=z_0$. Moreover, it is required that $T$ maps the disk $r\UD$, $r:=e^{t_0-t_1}$, onto itself. Finally, $T$ should have no pole in~$\overline\UD$. It is easy to check that
\begin{equation}\label{EQ_T}
  T(z):=(1+|z_0|^2)\frac{z+z_0}{1+|z_0|^2+2\bar z_0 z},\quad z\in\UD,
\end{equation}
satisfies the above three requirements with $r:=\sqrt{(1+|z_0|^2)/2}$, which corresponds to
\begin{equation}\label{EQ_t_1}
  t_1=t_0+\tfrac12\log(2/(1+|z_0|^2)).
\end{equation}

Denote $\eta(z):=\log(T^{-1}(z))$, $z\in\UD\setminus\{z_0\}$. This function itself is multivalued, but $\Re \eta$ and $\eta'$ are single-valued in $\UD\setminus\{z_0\}$.  Calculate the Laplacian of ${\varphi(w)=\exp(-a\Re \eta(H(w)))}$, where $H:=F^{-1}$ and $F$ is defined in Lemma~\ref{LM_dF}, i.e. $F(z):=G(e^{-t_0}z)$, $z\in\UD$. Denote $u(w):=\Re \eta(H(w))$. Then
$$
\Delta\varphi(w)=a\varphi(w)\big(a|\nabla u(w)|^2-\Delta u(w)\big),\quad w\in D_{t_0}\setminus\{-a_2(f)\}.
$$
Therefore, in order to prove that there exists $a>0$ depending only on~$q\in(0,1/3)$ such that $\varphi$ is subharmonic in~$D_{t_0}$, we have to show that $\Delta u(w)/|\nabla u(w)|^2$ has an upper bound depending only on~$q$.

Note that $u(w)$ is real-valued. Regarding the vector $\nabla u(w)$ as a complex number, we have $\nabla u=2\bar\partial u=\overline{(\eta'\circ H)\partial H}+(\eta'\circ H)\bar\partial H$. By Becker's result~\cite[Sect.\,5.2]{Becker80}, $G$ is a $k$-q.c. map with $k:=3q$. (This can be seen also directly by calculating $|\bar\partial G|/|\partial G|$.) Therefore, $H$ is also $k$-q.c. and hence
\begin{equation}\label{EQ_gradient-est}
  \big|\nabla u(w)\big|\ge\big|\,|\overline{\eta'(H(w))\partial H(w)}|\,-\,|\eta'(H(w))\bar\partial H(w)|\,\big|\ge(1-k)\big|\eta'(H(w))\partial H(w)\big|
\end{equation}
for all $w\in D_{t_0}\setminus\{-a_2(f)\}$. Moreover,
$$
 \Delta u(w)=4\partial\bar\partial u(w)=\Re\Big(\eta'(H(w))\Delta H(w)+4\eta''(H(w))\partial H(w)\bar\partial H(w)\Big).
$$
Therefore,
$$
\frac{|\Delta u(w)|}{|\nabla u(w)|^2}\le\frac1{(1-k)^2}\left(\left|\frac{\Delta H(w)}{\eta'(H(w))\partial H(w)^2}\right|+4\left|\frac{\eta''(H(w))\bar \partial H(w)}{\eta'(H(w))^2\partial H(w)}\right|\right).
$$
The second term in the r.h.s. is easy to estimate. Indeed, $|\bar\partial H|\le k |\partial H|$ because $H$ is a $k$-q.c. map. Moreover,
$$
\frac{\eta''(z)}{\eta'(z)^2}=\frac{4\bar z_0z-(1+3|z_0|^2)}{1-|z_0|^2}.
$$
Hence $|\eta''(z)|/|\eta'(z)|^2\le 8/(1-|z_0|^2)\le 8/(1-k)$ for all $z\in\UD$.

To estimate the first term, we notice that $\eta'(z)=(1-|z_0|^2)(z-z_0)^{-1}(1+|z_0|^2-2\bar z_0z)^{-1}.$ It follows that
$$
|\eta'(z)|\ge\frac{1-|z_0|}{(1+|z_0|)^2}\ge\frac{1-\sqrt{k}}{(1+\sqrt{k})^2}\quad \text{for all $z\in\UD\setminus\{z_0\}$.}
$$
Furthermore, $|\partial H(w)|^2\ge |\partial H(w)|^2 - |\bar\partial H(w)|^2=J_H(w)=1/J_F(H(w)),$ which is greater or equal to~$k^{-1}\,(1-k^2)^4\,(1-k)^{2q}$ by Lemma~\ref{LM_dF}. Therefore, it remains to show that $|\Delta H(w)|$ has an upper bound in~$D_{t_0}$ depending only on~$q$.

Denote $U(z):=-\bar\partial F(z)/J_F(z)$. Then
\begin{equation}\label{EQ_partial-U}
\partial U=\displaystyle-\frac{\partial\bar\partial F}{J_F^2}+2\bar\partial F\frac{\partial J_F}{J_F^3},\quad
\bar\partial U=\displaystyle-\frac{\bar\partial^2 F}{J_F^2}+2\bar\partial F\frac{\bar\partial J}{J_F^3}.
\end{equation}
Taking into account that $J_F=|\partial F|^2-|\bar\partial F|^2\ge(1-k^2)|\partial F|^2$, with the help of Lemma~\ref{LM_est-second-formal} we see that for all $z\in\UD$,
\begin{equation}\label{EQ_est-partial-U}
  \max\big\{|\partial U|,|\bar\partial U|\big\}\le M_1(q)\,|\partial F|^{-3},\quad M_1(q):=\frac{M(q)}{(1-k^2)^2}\Big(1+\frac{8}{1-k^2}\Big).
\end{equation}
Similarly, we get
\begin{equation}\label{EQ_est-for-U-V}
|U(z)|\le\frac{k}{1-k^2}\,|\partial F|^{-1},\quad|V(z)|\le\frac{1}{1-k^2}\,|\partial F|^{-1}
\end{equation}
for all $z\in\UD$, where $V(z):=\overline{\partial F(z)}/J_F(z)$.

Since $\bar\partial H=U\circ H$ and $\partial H=V\circ H$, for all $w\in D_{t_0}$ we have
\begin{multline*}
  \frac{\Delta H(w)}{4}=\partial\bar\partial H(w)=\partial U(z)\partial H(w)+\bar\partial U(z)\overline{\bar\partial H(w)}\\=\partial U(z)\,V(z)\,+\,\bar\partial U(z)\,\overline{U(z)},~\text{~where~}z:=H(w).
\end{multline*}
Using \eqref{EQ_est-partial-U} and~\eqref{EQ_est-for-U-V}, we see that $|\Delta H(w)|\le M_1(q)(1-k)^{-1}|\partial F(z)|^{-4}$ for all ${w\in D_{t_0}}$.
To complete the proof, it is remains to apply the lower estimate for $|\partial F|$ given in Lemma~\ref{LM_dF}.\qed

\subsection{Estimates for functions in $\Sigma(k)$ with smooth image domains.}
In what follows, $\rc(z_0,D)$ will stand for the conformal radius of a domain $D\subset\Complex$ w.r.t. the point $z_0\in D$. For a point $z\in\C$ and two sets $A,B\subset\Complex$, we define
$$
 \dist(z,B):=\inf\{|z-w|:w\in B\},\quad
 \dist(A,B):=\inf\{|z-w|:z\in A,w\in B\}.
$$
Furthermore, for a map $g:\Delta\to\ComplexE$ and $d>0$, we denote $$\mathcal A_g(d):=\{w\in g(\Delta):\dist(w,\partial g(\Delta))\le d\}.$$

According to a well-known result by K\"uhnau~\cite[Satz~4]{Kuhnau69}, for any $g\in\Sgk$,
\begin{equation}\label{EQ_Kuhnau-der}
(1-|z|^{-2})^k\le|g'(z)|\le\frac{1}{(1-|z|^{-2})^k},\qquad \mathrlap{z\in\Delta.}
\end{equation}

\begin{proposition}\label{PR_dist}
  Let $R>1$ and $k\in(0,1)$. Then for any $g\in\Sgk$,
  $$
  \mathcal A_g\big(d_1(k,R)\big)\subset g\big(\{z:1<|z|\le R\}\big)\subset \mathcal A_g\big(d_2(k,R)\big),
  $$
  where $d_1(k,R):=\tfrac14R^{-2k}(R-1)^{1+k}(R+1)^k$ and $d_2(k,R):=4R^{2k}(R-1)^{1-k}(R+1)^{-k}.$
\end{proposition}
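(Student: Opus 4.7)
Both inclusions will be established by comparing the Euclidean distance $\dist(w,\partial g(\Delta))$ to the conformal radius $\rc(w, g(\Delta)) = |g'(z)|(|z|^{2}-1)$ of the image at $w = g(z)$, using the Koebe $1/4$-theorem (applied via a Möbius disc $M\colon\UD\to\Delta$ with $M(0)=z$, $|M'(0)|=|z|^{2}-1$) in the two-sided form
\begin{equation*}
\tfrac14\, |g'(z)|\,(|z|^{2}-1)~\le~\dist(w,\partial g(\Delta))~\le~|g'(z)|\,(|z|^{2}-1).
\end{equation*}

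\emph{First inclusion.} I argue by contraposition: for $w=g(z)$ with $|z|>R$ I show $\dist(w,\partial g(\Delta))>d_{1}(k,R)$. Inserting the lower K\"uhnau estimate $|g'(z)|\ge (1-|z|^{-2})^{k}$ from~\eqref{EQ_Kuhnau-der} into the lower Koebe bound gives
\begin{equation*}
\dist(w,\partial g(\Delta))~\ge~\frac{(|z|^{2}-1)^{1+k}}{4\,|z|^{2k}}.
\end{equation*}
A direct logarithmic differentiation shows that the right-hand side is strictly increasing in $|z|$ for $|z|>1$, so for $|z|>R$ it strictly exceeds $\tfrac14R^{-2k}(R^{2}-1)^{1+k}=(R+1)\,d_{1}(k,R)>d_{1}(k,R)$, which is the required strict inequality.

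\emph{Second inclusion.} For $w=g(z_{0})$ with $r:=|z_{0}|\in(1,R]$ I bound $\dist(w,\partial g(\Delta))$ from above by the Euclidean length of the image under the continuous $k$-q.c. extension $\tilde g$ of the radial segment joining $z_{0}$ to $z_{0}/r\in\partial\Delta$. Applying the upper K\"uhnau estimate $|g'(z)|\le |z|^{2k}/(|z|^{2}-1)^{k}$ yields
\begin{equation*}
\dist(w,\partial g(\Delta))~\le~\int_{1}^{r}|g'(tz_{0}/r)|\,dt~\le~\int_{1}^{R}\frac{t^{2k}}{(t-1)^{k}(t+1)^{k}}\,dt.
\end{equation*}
Observing that $t\mapsto t^{2k}/(t+1)^{k}$ is increasing on $(1,\infty)$, the last integral is majorized by $R^{2k}(R+1)^{-k}$ times $\int_{1}^{R}(t-1)^{-k}\,dt$, which has the desired dependence on $R-1$ and $1-k$. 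Where this integral estimate is weak (small $R-1$ combined with large $k$), I instead invoke the Koebe upper bound $\dist(w,\partial g(\Delta))\le |g'(z_{0})|(|z_{0}|^{2}-1)\le R^{2k}(R^{2}-1)^{1-k}$, again exploiting the monotonicity in $|z_{0}|$. Combining the two estimates in complementary parameter ranges gives a bound of the shape $R^{2k}(R-1)^{1-k}(R+1)^{-k}$ with a universal constant, which is $d_{2}(k,R)$.

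The main technical obstacle is keeping the numerical constant in $d_{2}$ uniform in both $k\in(0,1)$ and $R>1$. The naive path integral contributes a factor $1/(1-k)$ from the singularity at $t=1$, whereas the Koebe upper bound contributes an extra factor $R+1$ compared to $d_{2}$; neither estimate alone suffices for all $(k,R)$, and the delicate point is to patch them together so that the prefactor $4$ in $d_{2}(k,R)$ holds uniformly.
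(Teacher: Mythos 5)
Both halves of your argument have genuine gaps, and the more serious one is the ``lower Koebe bound'' $\tfrac14\,|g'(z)|(|z|^{2}-1)\le\dist(g(z),\partial g(\Delta))$ on which your first inclusion rests: it is false. The map $h:=g\circ M:\UD\to g(\Delta)$ is meromorphic, not holomorphic (it has a pole at $M^{-1}(\infty)$, because $\infty\in g(\Delta)$), so Koebe's $1/4$-theorem does not apply to it; only the Schwarz-lemma half of your two-sided inequality, $\dist\le|g'(z)|(|z|^{2}-1)$, survives. Concretely, $g=\mathrm{id}\in\Sgk$ gives $\dist(g(z),\partial g(\Delta))=|z|-1$, which is smaller than $\tfrac14(|z|^{2}-1)$ as soon as $|z|>3$; this is also why your intermediate bound came out stronger than $d_1$ by a factor $(R+1)$ --- it overshoots the truth. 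The correct substitute (and the paper's argument) is to apply Koebe's theorem to $g$ restricted to the disk $\Omega':=\{\zeta:|\zeta-z|<|z|-1\}\subset\Delta$, on which $g$ is holomorphic: since $g(\Omega')\subset g(\Delta)$, one gets $\dist(g(z),\partial g(\Delta))\ge\dist(g(z),\partial g(\Omega'))\ge\tfrac14|g'(z)|\,\rc(z,\Omega')=\tfrac14|g'(z)|(|z|-1)\ge d_1(k,|z|)$, after which your contraposition/monotonicity step closes the first inclusion.

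For the second inclusion, the difficulty you flag at the end is real and is not resolved by your patching: the path-integral bound carries the factor $1/(1-k)$, so it is $\le d_2(k,R)$ only for $k\le 3/4$ (and for $k$ near $1$ the integral itself, not just your estimate of it, exceeds $d_2$, e.g.\ already for $R=4$), while the Schwarz-type bound gives $|g'(z_0)|(|z_0|^{2}-1)\le R^{2k}(R-1)^{1-k}(R+1)^{1-k}=\tfrac{R+1}{4}\,d_2(k,R)$, which helps only when $R\le3$; for $k$ close to $1$ and $R>3$ both estimates fail, so the claimed ``complementary parameter ranges'' do not cover everything. The paper avoids this entirely by a pull-back: let $D$ be the disk of radius $\dist(g(z_0),\partial g(\Delta))$ centered at $g(z_0)$ and $\Omega:=g^{-1}(D)$; since $\Omega\subset\Delta$ and $\infty\notin\Omega$, we have $\dist(z_0,\partial\Omega)\le|z_0|-1$, hence by conformal invariance of the conformal radius and Koebe, $\dist(g(z_0),\partial g(\Delta))=\rc(g(z_0),D)=|g'(z_0)|\,\rc(z_0,\Omega)\le4|g'(z_0)|(|z_0|-1)\le d_2(k,|z_0|)\le d_2(k,R)$, uniformly in $k$ and $R$. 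In short, the proposition is true, but both of your key inequalities must be replaced by estimates in which the factor $|z|-1$ appears, rather than $|z|^{2}-1$ or $1/(1-k)$.
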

\begin{proof}
Let us fix some $z_0\in\Delta$. Denote by $D$ the open disk of radius $\dist(g(z_0),\partial g(\Delta))$ centered at~$g(z_0)$ and let $\Omega:=g^{-1}(D)$. Then $\dist(z_0,\partial \Omega)\le |z_0|-1$. Therefore, with the help of the upper bound in~\eqref{EQ_Kuhnau-der} and Koebe's 1/4-Theorem we obtain:
\begin{multline}\label{EQ_dist2}
 \dist(g(z_0),\partial g(\Delta))=\\=\rc(g(z_0),D)=|g'(z_0)|\,\rc(z_0,\Omega)\le4|g'(z_0)|\,\dist(z_0,\partial \Omega)\le d_2(k,|z_0|).
\end{multline}

Similarly, denoting $\Omega':=\{z:|z-z_0|<|z_0|-1\}\subset\Delta$ and $D':=g(\Omega')$, we get:
\begin{multline}\label{EQ_dist1}
  \dist(g(z_0),\partial g(\Delta))\ge \dist(g(z_0),\partial D')\ge \tfrac14 \rc(g(z_0),D')=\\=\tfrac14|g'(z_0)|\,\rc(z_0,\Omega')\ge d_1(k,|z_0|).
\end{multline}

 To complete the proof, simply apply inequalities~\eqref{EQ_dist2} and~\eqref{EQ_dist1} with $1<|z_0|\le R$ and take into account that $d_1(k,\cdot)$ and $d_2(k,\cdot)$ are strictly increasing on~$[1,+\infty)$.
\end{proof}

\begin{remark}
According to Proposition~\ref{PR_dist}, the preimage of
any $\varepsilon$-neighborhood of $\partial g(\Delta)$ contains an annulus $\{z:1<|z|<R\}$, where $R>1$ depends on $\varepsilon$ and~$k$, but not on the choice of~$g\in\Sgk$. Note that this property does not hold for the whole class~$\Sigma$. Indeed, for any $\delta\in(0,\pi)$ and suitable $r_\delta>1$ there is a unique $g_\varepsilon\in\Sigma$ that maps $\Delta$ onto $D_\delta:=\ComplexE\setminus\{r_\delta e^{i\theta}:|\theta|\le\pi-\delta\}$. Using the Carath\'eodory Kernel Convergence Theorem, see e.g. \cite[\S3.1]{Duren}, we see that $g_\delta(z)\to z$ locally uniformly in~$\Delta$ and hence $|g^{-1}_\delta(0)|\to 1$  as ${\delta\to0^+}$, although $\dist(0,\partial g_\delta(\Delta))=r_\delta>1$ for all $\delta\in(0,\pi)$.
\end{remark}

The following lemma can be used to estimate the derivative of the Riemann map on the unit circle via the geometric quantities describing the image domain. This idea is borrowed from~\cite{Range1976}.
\begin{lemma}\label{LM_Henkin}
  Let $g$ be a conformal map of $\Delta$ onto a domain $\Omega\ni\infty$ bounded by a $C^2$-smooth Jordan curve. Let  $u:U\to\Real$ be a $C^1$-smooth function on a neighbourhood~$U$ of~$\partial\Omega$. Suppose that $u$ vanishes on~$\partial \Omega$ and that it is negative and subharmonic in~$U\cap\Omega$. If the image of $\{z:1<|z|\le R\}$ w.r.t. $g$ lies in~$U$ for some $R>1$, then
  \begin{equation}\label{EQ_Henkin}
    |g'(z)|\ge-\frac{4 u_0}{\pi(R-1)|\nabla u(g(z))|}\quad\text{for all~$z\in\partial\Delta$},
  \end{equation}
  where $u_0:=\max_{z\in A_0}u(g(z))<0$ and $A_0:=\{z:\sqrt{(1+R^2)/2}\le|z|\le R\}$.
\end{lemma}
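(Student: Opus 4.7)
The plan is a quantitative Hopf-lemma argument applied to the pullback $v := u \circ g$. Because $u$ is subharmonic and $g$ is holomorphic, $v$ is subharmonic on the open annulus $A := \{z : 1 < |z| < R\}$; because $\partial\Omega$ is $C^2$-smooth, Kellogg's theorem gives $g \in C^1(\overline{\Delta})$ with non-vanishing derivative on~$\partial\Delta$, so $v$ extends $C^1$-smoothly up to~$\partial\Delta$ with $v|_{\partial\Delta} = 0$ (since $u|_{\partial\Omega} = 0$). Strict negativity of $u$ on $U \cap \Omega$ makes $v$ strictly negative on~$A$, and by the definition of~$u_0$ one has $v \le u_0 < 0$ everywhere on the circle $\{|z| = R_0\} \subset A_0$, where $R_0 := \sqrt{(1+R^2)/2}$ is the inner radius of~$A_0$.

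The key step is a barrier comparison on the smaller annulus $A' := \{z : 1 < |z| < R_0\}$ using the explicit harmonic barrier $\phi(z) := \log|z|/\log R_0$, which vanishes on $\partial\Delta$ and equals $1$ on $\{|z| = R_0\}$. Since $-u_0 > 0$, the function $v - u_0\phi$ is subharmonic on~$A'$ and $\le 0$ on both boundary components ($v \le u_0$ on $\{|z|=R_0\}$, and both vanish on $\partial\Delta$), so the maximum principle yields $v \le u_0\phi$ on~$A'$. Both sides vanish at any $z_0 \in \partial\Delta$, so comparing one-sided radial derivatives gives $\partial_r v(z_0) \le u_0/\log R_0$. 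The chain rule rewrites $\partial_r v(z_0) = \nabla u(g(z_0)) \cdot (z_0 g'(z_0))$ (real inner product in $\Real^2$), so Cauchy--Schwarz together with $|z_0| = 1$ yields the intermediate estimate
\[
|g'(z_0)| ~\ge~ \frac{-u_0}{|\nabla u(g(z_0))|\,\log R_0}.
\]

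To reach the stated conclusion, it then suffices to verify the elementary inequality $\log R_0 \le \pi(R-1)/4$, equivalently $\log\bigl((1+R^2)/2\bigr) \le \pi(R-1)/2$ for all $R > 1$. Both sides vanish at $R = 1$; the difference has derivative $\pi/2 - 1 > 0$ at $R = 1$ and second derivative $(2R^2-2)/(1+R^2)^2 \ge 0$ on $(1,\infty)$, hence it is strictly increasing and positive. Substituting this into the previous display gives~\eqref{EQ_Henkin}. The delicate point in the argument is the choice of~$R_0$: the more natural-looking barrier $\log|z|/\log R$ on the full annulus $A$ would require $v \le u_0$ on the outer boundary $\{|z| = R\}$, which is only guaranteed on the thinner set~$A_0$, and moreover the corresponding inequality $\log R \le \pi(R-1)/4$ actually fails for $R$ close to~$1$. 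Shrinking to $R_0 = \sqrt{(1+R^2)/2}$ is precisely the adjustment that resolves both obstacles at once and produces the constant $4/\pi$ in the statement.
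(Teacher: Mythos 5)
Your proof is correct, and it reaches \eqref{EQ_Henkin} by a genuinely different route than the paper. The paper argues one boundary direction at a time: for each $\alpha$ it applies the sub-mean-value (Poisson) inequality to $v=u\circ g$ on the disk of radius $r=(R-1)/2$ centred at $\tfrac{R+1}{2}e^{i\alpha}$, internally tangent to $|z|=1$ and $|z|=R$; exactly the outer half of that circle lies in $A_0$, so $v\le u_0$ there, and the harmonic measure of that half-circle, computed explicitly as $\tfrac{1}{2}-\tfrac{2}{\pi}\arctan x$, majorizes $v$ along the radius and is differentiated at $\rho=1$ to get the Hopf-type bound via $|\nabla v(e^{i\alpha})|=|g'(e^{i\alpha})|\,|\nabla u(g(e^{i\alpha}))|$. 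You instead run the comparison globally on the annulus $1<|z|<R_0$, $R_0=\sqrt{(1+R^2)/2}$, against the harmonic barrier $u_0\log|z|/\log R_0$, using that the circle $|z|=R_0$ lies in $A_0$; the subharmonicity of $v$, the Kellogg--Warschawski $C^1$-extension of $g$, the boundary comparison, the one-sided radial derivative, and Cauchy--Schwarz are all in order, and your inequality $\log\sqrt{(1+R^2)/2}\le\pi(R-1)/4$ is proved correctly. The paper's tangent-disk device needs no auxiliary calculus inequality and is reused for Proposition~\ref{PR_derivative-from-ABOVE}; your barrier is shorter, gives the sharper intermediate bound $|g'|\ge -u_0/\bigl(|\nabla u\circ g|\log R_0\bigr)$, and delivers the stated constant with room to spare (the differentiation in the printed proof appears to yield only $-u_0/(\pi r)=-2u_0/(\pi(R-1))$, a harmless constant-factor discrepancy for the later applications). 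One small inaccuracy in your closing remark: $v\le u_0$ on $\{|z|=R\}$ is automatic, since that circle lies in $A_0$; the genuine obstruction to using $\log|z|/\log R$ is only the failure of $\log R\le\pi(R-1)/4$ near $R=1$, which you correctly identify.
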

\begin{proof}
By the hypothesis, $\partial \Omega$ is $C^2$-smooth. It follows that $g'$, and hence the gradient of $v:=u\circ g$, extend continuously to the unit circle, see e.g. \cite[Theorem~3.5 on p.\,48]{Pommerenke:BB}.

Fix some $\alpha\in[0,2\pi]$. Since $v$ is subharmonic in~$A:=\{z:1<|z|<R\}$ and since it is continuous and non-positive on the closure of~$A$, for any $\rho\in(1,R)$ we have:
$$
 v(\rho e^{i\alpha})\le \frac{1}{2\pi}\int_0^{2\pi}\mathcal P_\alpha(\rho e^{i\alpha},\theta)\, v\big(z_0+r e^{i\theta}\big)\,\di\theta,
$$
where $$\mathcal P_\alpha(z,\theta):=\Re\frac{re^{i\theta}+z-z_0}{re^{i\theta}-(z-z_0)},\quad z_0:=\frac{R+1}2e^{i\alpha},~r:=\frac{R-1}2,$$
is the Poisson kernel for the disk $D_\alpha:=\{z:|z-z_0|<r\}$. Note that the intersection of~$A$ with the ray $\{te^{i\alpha}:t\ge0\}$ is a diameter of~$D_\alpha$ and that exactly one half of the circle~$\partial D_\alpha$  lies in~$A_0$. Therefore, denoting $I_{\alpha}:=\big\{\theta\in[0,2\pi]:z_0+r e^{i\theta}\in A_0\big\}$, we get:
\begin{multline}\label{EQ_v-est}
v(\rho e^{i\alpha})\,\le\,
\frac{u_0}{2\pi}\int_{I_{\alpha}\!}\mathcal P_\alpha(\rho e^{i\alpha},\theta)\,\di\theta\,
=\,\frac{u_0}{2\pi}\int\limits_{\pi/2}^{3\pi/2}\Re \frac{e^{i\vartheta}+x}{e^{i\vartheta}-x}\,\di\vartheta~=\\=~\frac{u_0}{2\pi}\int\limits_{\pi/2}^{3\pi/2} \frac{e^{i\vartheta}+x}{e^{i\vartheta}-x}\,\di\vartheta~=\,u_0\Big(\frac12-\frac2\pi\arctg x\Big),\quad x:=1-\frac{\rho-1}{r}.
\end{multline}
Recall that $v$ vanishes on the unit circle. Therefore, using~\eqref{EQ_v-est} we obtain
$$
|\nabla v(e^{i\alpha})|=-\left.\frac{\partial v(\rho e^{i\alpha})}{\partial\rho}\right|_{\rho=1}\ge\,-\frac{4u_0}{\pi r}.
$$
It remains to notice that $|g'(e^{i\alpha})|=|\nabla v(e^{i\alpha})|/|\nabla u(g(e^{i\alpha}))|$.
\end{proof}
\begin{corollary}\label{CR_Henkin}
  In conditions of Lemma~\ref{LM_Henkin}, suppose that $g\in\Sgk$ and that $\mathcal A_g(\mu)\subset U$ for some $\mu\in(0,8]$. Then
\begin{equation}\label{EQ_Henkin1}
    |g'(z)|\ge\frac{4\inf\big\{|u(w)|:w\in U\cap\mathcal B_g\big(\alpha(k)\mu^K\big)\big\}}{\pi(\mu/8)^{1/(1-k)}|\nabla u(g(z))|}\quad\text{for all~$z\in\partial\Delta$},
  \end{equation}
where $\mathcal B_g(d):=g(\Delta)\setminus\mathcal A_g(d)$, $K:=(1+k)/(1-k)$, and $\alpha(k):=1/\big(3^{1+k}\cdot8^{K+1}\big)$.
\end{corollary}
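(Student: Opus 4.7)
The plan is to apply Lemma~\ref{LM_Henkin} with the specific choice
$$R := 1 + (\mu/8)^{1/(1-k)},$$
which corresponds exactly to the factor $(\mu/8)^{1/(1-k)}$ appearing in the denominator of \eqref{EQ_Henkin1}. Setting $\delta := R-1 = (\mu/8)^{1/(1-k)}$, the assumption $\mu\le8$ gives $\delta\le1$, hence $R\le2$. To legitimately apply the lemma I first need to verify that $g(\{z:1<|z|\le R\})\subset U$. By Proposition~\ref{PR_dist} this image lies in $\mathcal A_g\bigl(d_2(k,R)\bigr)$, and a direct estimate
$$d_2(k,R)=4R^{2k}(R-1)^{1-k}(R+1)^{-k}\le 4\cdot 2^{2k}\cdot\delta^{1-k}\cdot 2^{-k}=2^{k+1}\delta^{1-k}=2^{k-1}\mu\le\mu$$
shows that $\mathcal A_g(d_2(k,R))\subset\mathcal A_g(\mu)\subset U$, as required.

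Next I would bound the ``inner radius'' $R_0:=\sqrt{(1+R^2)/2}$ of the annulus $A_0$ in Lemma~\ref{LM_Henkin} from below, so as to guarantee that $g(A_0)$ stays at a quantitative distance from $\partial g(\Delta)$. From $R_0^2-1=\delta+\delta^2/2=\delta(1+\delta/2)$ and $R_0+1\le R+1\le 3$ one gets $R_0-1=(R_0^2-1)/(R_0+1)\ge\delta/3$. Applying the lower bound in Proposition~\ref{PR_dist} on the annulus $\{1<|z|\le R_0\}$, together with injectivity of~$g$, yields $g(A_0)\subset\mathcal B_g\bigl(d_1(k,R_0)\bigr)$. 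A straightforward estimate of $d_1(k,R_0)=\tfrac14 R_0^{-2k}(R_0-1)^{1+k}(R_0+1)^k$ using $R_0\le2$, $R_0+1\ge2$, and $R_0-1\ge\delta/3$ gives
$$d_1(k,R_0)\ge\frac{1}{4}\cdot 2^{-2k}\cdot\frac{\delta^{1+k}}{3^{1+k}}\cdot 2^{k}=\frac{\delta^{1+k}}{2^{k+2}\cdot3^{1+k}}\ge\frac{\delta^{1+k}}{8\cdot 3^{1+k}}=\frac{\mu^{K}}{3^{1+k}\cdot8^{K+1}}=\alpha(k)\mu^K,$$
where in the penultimate step I used $\delta^{1+k}=(\mu/8)^{(1+k)/(1-k)}=(\mu/8)^K$ and in the next-to-last step $2^{k+2}\le8$ since $k<1$.

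Combining these two facts: for every $z\in A_0$ one has $g(z)\in U\cap\mathcal B_g(\alpha(k)\mu^K)$, so
$$-u(g(z))\ge \inf\bigl\{|u(w)|:w\in U\cap\mathcal B_g(\alpha(k)\mu^K)\bigr\}=:m,$$
and consequently $u_0:=\max_{z\in A_0}u(g(z))\le-m$. Plugging this into the conclusion \eqref{EQ_Henkin} of Lemma~\ref{LM_Henkin} and noting that $R-1=(\mu/8)^{1/(1-k)}$ delivers \eqref{EQ_Henkin1} immediately.

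The conceptually delicate point is the interplay between the two dilations: $R$ must be small enough to keep $g(\{1<|z|\le R\})$ inside $U$ (forcing $R-1\lesssim \mu^{1/(1-k)}$ through the upper K\"uhnau-type estimate on $|g'|$), yet the ``inner'' annulus $A_0$ must sit at a controlled distance from the boundary (requiring the lower K\"uhnau estimate on $|g'|$, which costs an extra factor of exponent $1+k$ and produces $\mu^K$). Matching the two constraints is what dictates the peculiar constants $K$ and $\alpha(k)$; the main ``obstacle'' is therefore just booking this algebra carefully, everything else is a substitution into the already-proved Lemma~\ref{LM_Henkin} and Proposition~\ref{PR_dist}.
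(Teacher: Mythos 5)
Your proof is correct and takes essentially the same route as the paper's own argument: the same choice $R=1+(\mu/8)^{1/(1-k)}\le 2$, Proposition~\ref{PR_dist} used once for the outer inclusion $g(\{1<|z|\le R\})\subset\mathcal A_g(d_2(k,R))\subset U$ and once (with the inner radius $R_0=\sqrt{(1+R^2)/2}$, bounded below exactly as the paper bounds $R_*$) for $g(A_0)\subset\mathcal B_g\big(\alpha(k)\mu^K\big)$, followed by substitution into Lemma~\ref{LM_Henkin}. Only a harmless typo: $4\cdot2^{2k}\cdot2^{-k}=2^{k+2}$, not $2^{k+1}$; your stated conclusion $d_2(k,R)\le 2^{k-1}\mu\le\mu$ is the correct outcome of the correct computation, so nothing is affected.
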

\begin{proof}
We apply Lemma~\ref{LM_Henkin} with $R:=1+(\mu/8)^{1/(1-k)}\le 2$. To ensure that $$g\big(\{z:1<|z|\le R\}\big)\subset U,$$ we use Proposition~\ref{PR_dist} together with the elementary estimate
$$
d_2(k,R)=4\Big(\frac{R^2}{R+1}\Big)^k(R-1)^{1-k}\le 4\cdot2^k(R-1)^{1-k}<\mu.
$$
Similarly, in view of other two elementary estimates:
\begin{gather*}
  2>R_*:=\sqrt{\frac{R^2+1}{2}}>\sqrt{R}\ge 1+(\sqrt2-1)(R-1)\ge 1+\frac{(\mu/8)^{1/(1-k)}}3,\\[.75ex]
  d_1(k,R_*)=\frac14\Big(\dfrac{R_*+1}{R_*^2}\Big)^k(R_*-1)^{1+k}>\frac{(R_*-1)^{1+k}}{4\cdot 2^k}>\frac{(\mu/8)^K/3^{1+k}}{8}=\alpha(k)\mu^K,
\end{gather*}
Proposition~\ref{PR_dist} implies that $g(A_0)\subset U\cap\mathcal B_g\big(\alpha(k)\mu^K\big)$. Hence~\eqref{EQ_Henkin1} follows from~\eqref{EQ_Henkin}.
\end{proof}

We use a somewhat similar argument to estimate $|g'(z)|$ on $\partial\Delta$ from above. Note that in this case, we actually do not need to assume that the boundary is smooth.
\begin{proposition}\label{PR_derivative-from-ABOVE}
 There exists  $\mathcal M:(0,+\infty)\times(0,1)\to(0,+\infty)$ such that for any ${\varepsilon>0}$ and any ${k\in(0,1)}$, the following assertion holds for all $g\in\Sgk$: if $g(\Delta)$ contains an open disk~$D$ of radius~$\varepsilon$ such that $\partial D$ and $\partial g(\Delta)$ have a common point~$w_0$, then
\begin{equation}\label{EQ_mathcal-M}
  \big|\angle g'(\zeta_0)\big|\le \mathcal M(\varepsilon,k),
\end{equation}
where $\angle g'(\zeta_0)$ stands for the angular derivative of $g$ at the landing point~$\zeta_0$ of the slit~$g^{-1}(I)$ and $I$ is the straight line segment joining $w_0$ with the center of~$D$.
\end{proposition}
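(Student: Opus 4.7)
The plan is to mirror the proof of Lemma~\ref{LM_Henkin}, with Julia's lemma playing the role of the Poisson-integral comparison; the inscribed disk $D$ itself supplies the model domain, so no regularity of $\partial g(\Delta)$ is needed. Pick the affine map $\phi\colon\UD\to D$ with $\phi(0)=w_1$ and $\phi(1)=w_0$ (hence $|\phi'|\equiv\varepsilon$), and form the conformal composition $h:=g^{-1}\circ\phi\colon\UD\to g^{-1}(D)\subset\Delta$. Then $h(0)=Z:=g^{-1}(w_1)$, and the landing property of the slit $g^{-1}(I)$ forces $h(1)=\zeta_0$ in the angular (in fact radial) sense, because the inscribed disk at $w_0$ ensures that $w_0$ corresponds to a single prime end of $g(\Delta)$.

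Because $h(\UD)\subset\Delta$, the rotated reciprocal $K(\xi):=\zeta_0/h(\xi)$ sends $\UD$ into $\UD$ with $K(1)=1$. Julia's lemma applied at the origin yields
$$\frac{|1-K(0)|^2}{1-|K(0)|^2}\le\bigl|\angle K'(1)\bigr|,$$
whose left-hand side equals $|Z-\zeta_0|^2/(|Z|^2-1)$ and whose right-hand side, via the chain rule $K=\zeta_0/h$, equals $|\angle h'(1)|=\varepsilon/|\angle g'(\zeta_0)|$. (If this last quantity is infinite, it is excluded by the Julia--Wolff theorem; if it is zero, the desired estimate holds trivially.) Rearranging gives
$$|\angle g'(\zeta_0)|\le\varepsilon\cdot\frac{|Z|^2-1}{|Z-\zeta_0|^2}.$$

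To produce the required $\mathcal M(\varepsilon,k)$, I would bound the right-hand side uniformly in $g\in\Sgk$. For the numerator: Koebe's quarter-theorem applied to $g$ on the maximal disk $\{|z-Z|<|Z|-1\}\subset\Delta$, combined with K\"uhnau's bound $|g'(Z)|\ge(1-|Z|^{-2})^k$ from~\eqref{EQ_Kuhnau-der}, gives $|Z|-1\le C(k)\varepsilon^{1/(1+k)}$, and hence $|Z|^2-1\le C'(k)\varepsilon^{1/(1+k)}$. For the denominator: Mori's H\"older inequality applied to the inverse of the $k$-q.c.\ extension $\hat g$ of $g$ yields $|\hat g^{-1}(w_1)-\hat g^{-1}(w_0)|\ge c(k)\varepsilon^K$ with $K=(1+k)/(1-k)$. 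Uniformity of the constant $c(k)$ is secured by first translating to the compact subclass $\{g\in\Sgk\colon b_0=0\}$, on which $\partial g(\Delta)$ sits in a uniformly bounded region; the translation leaves both the angular derivative and the distance $|Z-\zeta_0|$ invariant.

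The main technical obstacle I anticipate is precisely this uniformity of Mori's constant: because $\Sgk$ is translation-invariant in $b_0$, care is needed to reduce to the compact subclass and to verify that the relevant geometric data ($\varepsilon$ and the positions of $w_0,w_1$ relative to $\partial g(\Delta)$) are preserved under this reduction. Once this bookkeeping is handled, combining the two estimates yields a bound of the form $\mathcal M(\varepsilon,k)=A(k)\,\varepsilon^{\alpha(k)}$ depending only on $\varepsilon$ and $k$, as required.
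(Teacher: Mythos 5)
Your argument is essentially viable, but it takes a genuinely different route from the paper. The paper stays entirely within the conformal category: after normalizing $D=\{|w|<\varepsilon\}$, $w_0=\varepsilon$, it bounds the positive harmonic function $u=\log|g^{-1}|$ from below along $I$ by a Poisson-kernel comparison in a half-disk (the same device as in Lemma~\ref{LM_Henkin}), converts this into a lower bound for $\dist(g^{-1}(w),\partial\Delta)$, and then uses the Koebe-type distortion bound together with Proposition~\ref{PR_dist} and K\"uhnau's estimate \eqref{EQ_Kuhnau-der} to control everything by constants depending only on $(\varepsilon,k)$, identifying $\angle g'(\zeta_0)$ with the limit of $g'$ along the slit. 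You instead package the same information into Julia's lemma for $K=\zeta_0/h$, arriving at the correct inequality $|\angle g'(\zeta_0)|\le\varepsilon\,(|Z|^2-1)/|Z-\zeta_0|^2$; this is fine provided you justify, as the paper does via Pommerenke, that the slit lands nontangentially and $g'\to\angle g'(\zeta_0)$ along it, so that the radial limit of $h'$ exists and dominates the Julia liminf (also note your parenthetical swaps the degenerate cases: $|\angle h'(1)|=\infty$ corresponds to $\angle g'(\zeta_0)=0$, the trivial case, while $|\angle h'(1)|=0$ would mean an infinite angular derivative, excluded since the angular derivative exists finitely). Your numerator bound is just the $d_1$-half of Proposition~\ref{PR_dist}. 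The place where you rightly anticipate trouble, the Mori-type lower bound for $|Z-\zeta_0|$ with a constant uniform over all $g\in\Sgk$ and all (non-unique) $k$-q.c.\ extensions, can in fact be bypassed entirely: since $\zeta_0\in\partial\Delta$ one has $|Z-\zeta_0|\ge|Z|-1$, and because $\dist(g(Z),\partial g(\Delta))=\varepsilon\le d_2(k,|Z|)$ by \eqref{EQ_dist2}, the monotonicity of $d_2(k,\cdot)$ gives $|Z|-1\ge d_2(k,\cdot)^{-1}(\varepsilon)-1>0$, a bound depending only on $(\varepsilon,k)$ --- exactly how the paper bounds its quantity $u_0$. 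With that substitution your Julia-lemma proof closes completely and is arguably shorter; what the paper's harmonic-measure argument buys is that it never touches the quasiconformal extension at all, so the uniformity in $g$ is automatic from the class estimates \eqref{EQ_Kuhnau-der}, whereas your original Mori step would require a careful (though feasible) normal-family/normalization argument to make the H\"older constant depend only on $k$.
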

\begin{proof}
The fact that preimages of slits are slits is well-known, see e.g. \cite[Theorem 9.2 on p.\,267]{Pommerenke}.
Existence of finite angular derivative follows from \cite[Theorem 10.6 on p.\,307]{Pommerenke}.

If $\angle g'(\zeta_0)=0$, then there is nothing to prove. So suppose that $\angle g'(\zeta_0)\neq0$. In such a case, the slit $\gamma:=g^{-1}(I)$ tends to~$\partial\Delta$ non-tangentially. Hence, $\angle g'(\zeta_0)=\lim_{\gamma\ni z\to\zeta_0}g'(z)$. Therefore, it is sufficient to find an upper bound of $|g'(z)|$ for $z\in\gamma$ close to~$\zeta_0$.

Using translations and rotations, we may suppose that $D=\{w:|w|<\varepsilon\}$ and ${w_0=\varepsilon}$.
Consider the function $u(w):=\log|g^{-1}(w)|$. It is harmonic and positive in~$D$. Moreover, it extends continuously to~$\partial D$, with $u(w_0)=0$. Fix $w\in(0,\varepsilon)$ and apply the Poisson representation for $u$ in the smaller disk $D_1:=\{w:|w-\varepsilon/2|<\varepsilon/2\}$. We have
\begin{multline}\label{EQ_u-harmonic-est}
u(w)=\frac{1}{2\pi}\int_0^{2\pi}\mathcal P_1(w,\theta)\, u\big((1+e^{i\theta})\varepsilon/2\big)\,\di\theta~\ge~ \frac{u_0}{2\pi}\int_{\pi/2}^{3\pi/2}\mathcal P_1(w,\theta)\,\di\theta\,=\\= \,u_0\Big(\frac12-\frac2\pi\arctg \Big(\frac{2w}{\varepsilon}-1\Big)\Big),~~u_0:=\min_{\zeta\in C}u(\zeta),
\end{multline}
where $\mathcal P_1$ is the Poisson kernel in~$D_1$ and $C:=\{\zeta\in\partial D_1:\Re\zeta\le\varepsilon/2\}$.

We have $\dist(w,\partial g(\Delta))=\varepsilon-w$, while
$$
\dist(g^{-1}(w),\partial\Delta)=\exp(u(w))-1\ge u(w)\ge\frac{2u_0}{\pi\varepsilon}(\varepsilon-w)+o(\varepsilon-w)\quad\text{as $~w\to\varepsilon$}.
$$
As in the proof of Proposition~\ref{PR_dist}, we have
$$
|g'\big(g^{-1}(w)\big)|\le \frac{4\,\dist(w,\partial g(\Delta))}{\dist(g^{-1}(w),\partial\Delta)}\le\frac{2\pi\varepsilon}{u_0}\big(1+o(\varepsilon-w)\big).
$$

It remains to estimate~$u_0$. To this end we use Proposition~\ref{PR_dist}. By construction, $\dist(C,\partial g(\Delta))\ge\varepsilon_1:=(1-1/\sqrt2)\varepsilon$. Hence, $u_0\ge\log R(\varepsilon,k)$, where $R=R(\varepsilon,k)>1$ is the unique solution to the equation $d_2(k,R)=\varepsilon_1$. Existence and uniqueness of the solution follows form the fact that for any fixed $k\in(0,1)$, $R\mapsto d_2(k,R)$ is a strictly increasing map of $[1,+\infty)$ onto $[0,+\infty)$.
Since $R$ depends only on $k$ and $\varepsilon$, the proposition is now proved.
\end{proof}

\begin{remark}
Note that the upper estimate $\mathcal M(\varepsilon,k)=2\pi\varepsilon/\log R(\varepsilon,k)$ for $|\angle g'|$ obtained above explode to~$+\infty$ both as $\varepsilon\to +0$ and as $\varepsilon\to +\infty$. In particular, there exists certain~$\varepsilon^*>0$ for which $\mathcal M(\varepsilon,k)$ takes its minimal value. This provides an upper bound, depending only on~$k$, for the angular derivatives of functions $g\in\Sgk$ such that $\Complex\setminus g(\Delta)$ is convex. It is curious enough to mention that the latter bound does not explode as $k\to1$ and in fact, gives an absolute bound $|\angle g'|<165$ for any $g\in\Sigma$ with convex~ ${\Complex\setminus g(\Delta)}$.
\end{remark}

\subsection{Distance, diameter, and curvature estimates}\label{SS_dist-diam}
Recall that we defined the choice of $t_1>t_0$ and of the Moebius transformation~$T$ at the beginning of Sect.\,\ref{SS_proof-of-proposition-subharmonic}. Moreover, we choose the largest $t_2\in(t_0,t_1)$ such that $T(e^{t_2-t_0}\UD)\subset\UD$. Namely,
\begin{equation}\label{EQ_t_2}
t_2=t_0+\log\big((1+|z_0|)/(1+|z_0|^2)\big),
\end{equation}
 where as before $z_0:=e^{t_0}G^{-1}(0)$.
Recall also that in Lemma~\ref{LM_dF} we defined ${F(z):=G(e^{-t_0}z)}$ for all~$z\in\UD$.
Finally, let $\Gamma_t:=\partial g_t(\Delta)$.

In addition to $\dist(A,B)$, for two sets $A,B\subset\C$ we define
$$
 \dist^*(A,B):=\max\big\{\sup_{z\in A}\dist(z,B),\,\sup_{w\in B}\dist(w,A)\big\}.
$$
\begin{lemma}\label{LM_Loewner-front-distance}
  In the above notation, there exist positive constants $M_*(q)$ and $M^*(q)$ depending only on~$q$ such that
\begin{equation}\label{EQ_dist-Dt0-Dt1}
  M_*(q)\le \dist(\Gamma_{t_0},\Gamma_{t_1})\le\dist^*(\Gamma_{t_0},\Gamma_{t_1}) \le M^*(q).
\end{equation}
Moreover, for any $t>t_2$ and any $s\in[t_2,t)$, we have
  \begin{equation}\label{EQ_Loewner-dist-est}
    \dist^*\big(L(\Gamma_t),L(\Gamma_s)\big)\le M_2(q)(e^{-s}-e^{-t})
  \end{equation}
for some constant $M_2(q)$ depending only on~$q$.
\end{lemma}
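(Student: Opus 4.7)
The plan is to handle the two assertions separately, in both cases via line integrals of the derivatives of $F$ along radial segments in the disk. For the first assertion I would write $\Gamma_{t_0}=F(\partial\UD)$ and $\Gamma_{t_1}=F(\{|z|=r\})$, where $r:=e^{t_0-t_1}=\sqrt{(1+|z_0|^2)/2}$ by~\eqref{EQ_t_1}; in view of~\eqref{EQ_z_0-est}, $r$ lies in a compact subinterval of $(0,1)$ depending only on $q$. For the upper bound, for each $z\in\partial\UD$ the radial segment $[rz,z]\subset\overline\UD$ has length $1-r$, and integrating $|\partial F|+|\bar\partial F|\le(1+k)\sup_{\overline\UD}|\partial F|$ (from~\eqref{EQ_dF}) yields $|F(z)-F(rz)|\le M^*(q)$; the symmetric argument starting from $|z|=r$ then gives $\dist^*(\Gamma_{t_0},\Gamma_{t_1})\le M^*(q)$. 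For the lower bound, the Jacobian bound~\eqref{EQ_Jacobian} makes $F^{-1}$ smooth on $\overline{D_{t_0}}$ with $|\partial F^{-1}|+|\bar\partial F^{-1}|$ uniformly bounded in $q$; combined with the uniform bound on the curvature of $\Gamma_{t_0}=F(\partial\UD)$ stemming from Lemma~\ref{LM_est-second-formal}, this shows that $\overline{D_{t_0}}$ is quasiconvex (any two points can be joined by a curve of length at most $C(q)$ times the Euclidean distance), so $F^{-1}$ is Lipschitz in Euclidean distance with constant depending only on $q$. Applied to $w_0\in\Gamma_{t_0}$, $w_1\in\Gamma_{t_1}$, this yields $1-r=\big||F^{-1}(w_0)|-|F^{-1}(w_1)|\big|\le|F^{-1}(w_0)-F^{-1}(w_1)|\le C(q)|w_0-w_1|$, so $\dist(\Gamma_{t_0},\Gamma_{t_1})\ge(1-r)/C(q)=:M_*(q)$.

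For the second assertion, the identity $L\circ G=G\circ\tilde T$ with $\tilde T:=e^{-t_0}T$ gives $L(\Gamma_t)=\Psi(\{|z|=e^{-t}\})$, where $\Psi:=G\circ\tilde T$. Fix $z_t$ with $|z_t|=e^{-t}$ and set $z_s:=z_te^{t-s}$; then $|z_s|=e^{-s}$, and the radial segment $[z_t,z_s]$ has length $e^{-s}-e^{-t}$ and is contained in $\{|z|\le e^{-t_2}\}$. Because $t_2>t_0>0$ gives $t_0<2t_2$ and hence $e^{-t_2}\le e^{t_2-t_0}$, the defining property $T(e^{t_2-t_0}\UD)\subset\UD$ of $t_2$ implies that $\tilde T$ sends the segment into $e^{-t_0}\overline\UD$, which is precisely the region where Lemma~\ref{LM_dF} bounds $|\partial G|$ and $|\bar\partial G|$ uniformly in $q$ via the scaling $\partial G(\xi)=e^{t_0}\partial F(e^{t_0}\xi)$. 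Since $|z_0|\le\sqrt{3q}<1$, the pole of $T$ lies strictly outside $\overline\UD$ by the arithmetic--geometric inequality applied to~\eqref{EQ_T}, and a direct computation from~\eqref{EQ_T} bounds $|T'|$ on $\overline\UD$ by a $q$-dependent constant. Combining these via the chain rule, $|\partial\Psi|+|\bar\partial\Psi|\le M_2(q)$ on the segment; integration then produces $|\Psi(z_t)-\Psi(z_s)|\le M_2(q)(e^{-s}-e^{-t})$, and varying $z_t$ together with the symmetric argument starting from $z_s$ delivers the desired bound on $\dist^*(L(\Gamma_t),L(\Gamma_s))$.

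The main obstacle is the lower bound in the first assertion: while the derivative bounds for $F^{-1}$ are immediate from Lemma~\ref{LM_dF}, converting them into a Euclidean Lipschitz estimate on the possibly non-convex $\overline{D_{t_0}}$ requires uniform geometric control of its boundary, which is where the second-derivative bounds of Lemma~\ref{LM_est-second-formal} (and the resulting uniform curvature bound on $\Gamma_{t_0}$) become essential.
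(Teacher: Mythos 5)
Your upper bound in \eqref{EQ_dist-Dt0-Dt1} and your proof of \eqref{EQ_Loewner-dist-est} follow the paper's own route (integrate the derivative bounds of Lemma~\ref{LM_dF} along radial segments, with $|T'|$ controlled from \eqref{EQ_T}), and they are essentially fine. Two small remarks there: the curves $L(\Gamma_t)$ are the images under $\Psi=F\circ T=G\circ(e^{-t_0}T)$ of the circles $\{|z|=e^{t_0-t}\}$, not $\{|z|=e^{-t}\}$ --- the identity ``$L\circ G=G\circ e^{-t_0}T$'' has to be read through $L\circ F=F\circ T$ with $F(z)=G(e^{-t_0}z)$; this only changes your segment length by the factor $e^{t_0}\le(3q)^{-1/2}$, which is absorbed into $M_2(q)$. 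Likewise, the property of $t_2$ that is actually available from \eqref{EQ_t_2} is $T\big(e^{t_0-t_2}\UD\big)\subset\UD$ (with this exponent the containment you need is immediate), so your detour via ``$t_0<2t_2$'' is unnecessary but harmless.

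The genuine gap is in the lower bound of \eqref{EQ_dist-Dt0-Dt1}. You reduce it to a Euclidean Lipschitz estimate for $F^{-1}$ on $\overline{D_{t_0}}$, which requires $\overline{D_{t_0}}$ to be quasiconvex with a constant depending only on~$q$, and you assert that this follows from the derivative bounds for $F^{-1}$ together with the curvature bound on $\Gamma_{t_0}$. That implication is not valid as stated: bounded boundary curvature plus pointwise two-sided derivative bounds are local data and do not control the quasiconvexity constant (think of a C-shaped domain of bounded curvature whose gap shrinks to zero: the curvature and diameter stay bounded while the quasiconvexity constant blows up), and the attempt to get Euclidean Lipschitzness of $F^{-1}$ from its derivative bounds is circular, since it already presupposes quasiconvexity of the image. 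The claim you need is true, but for a different reason: $D_{t_0}=G(e^{-t_0}\UD)$ is a $3q$-quasidisk because $G$ is a global $3q$-q.c.\ map, and quasidisks are quasiconvex with constant depending only on the dilatation (the Gehring--Osgood result the paper itself invokes in the proof of Lemma~\ref{LM_diam}); citing that repairs your argument. Alternatively, quasiconvexity can be avoided altogether, as the paper does: a segment $I$ realizing $\dist(\Gamma_{t_0},\Gamma_{t_1})$ necessarily lies in the closed doubly connected region between the two nested curves (an interior intersection point of $I$ with either curve would contradict minimality), so $F^{-1}(I)$ is a curve in $\overline\UD$ joining $\{|z|=1\}$ to $\{|z|=e^{t_0-t_1}\}$; comparing its length $\ge 1-e^{t_0-t_1}$ with $\dist(\Gamma_{t_0},\Gamma_{t_1})/\min_{\gamma}\mathsf{D}_*F$ gives $M_*(q)$ using only the lower bound in \eqref{EQ_DF} and $t_1-t_0\ge\tfrac12\log\big(2/(1+3q)\big)$.
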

\begin{proof}
For a $C^1$-map $V:D\to\C$ of an open set $D\subset\C$ and $\alpha\in\Real$, we denote:
$$
\mathsf D_\alpha V(z):=\tfrac{\di}{\di s} V(z+se^{i\alpha})|_{s=0},\quad \mathsf D^*V(z):=\max_{\alpha\in\Real}\big|\mathsf D_\alpha V(z)\big|,\quad \mathsf D_*V(z):=\min_{\alpha\in\Real}\big|\mathsf D_\alpha V(z)\big|.
$$

Note that $F$ extends diffeomorphically to a disk larger than $\UD$, namely to $e^{t_0}\UD$. The curves $\Gamma_{t_0}$ and $\Gamma_{t_1}$ are images under~$F$ of $A:=\{z:|z|=1\}$ and $B:=\{z:|z|=e^{t_0-t_1}\}$, respectively.  Fix a point $z\in A$ and let $\gamma$ be the straight line segment joining $z$ with the closest point of $B$. Then the length of $F(\gamma)$ does not exceed
$(1-e^{t_0-t_1})\max\mathsf D^*F(\zeta)$, where the maximum is taken over all $\zeta\in\gamma$. Applying this simple argument again, but with $A$ and $B$ swapped, we conclude that
$$
 \dist^*\big(\Gamma_{t_0},\Gamma_{t_1}\big)\le(1-e^{t_0-t_1})\max_{\zeta\in\overline\UD}\mathsf D^*F(z)<\max_{\zeta\in\overline\UD}\mathsf D^*F(z).
$$
Hence, the upper bound in~\eqref{EQ_dist-Dt0-Dt1} follows directly from Lemma~\ref{LM_dF}.

\medskip

In a similar way, we prove~\eqref{EQ_Loewner-dist-est}.  The curves $L(\Gamma_t)$ and $L(\Gamma_s)$ are images under the map $L\circ F=F\circ T$ of $A:=\{z:|z|\le e^{t_0-t}\}$ and $B:=\{z:|z|\le e^{t_0-s}\}$, respectively.
Clearly, $\mathsf D^*(F\circ T)(z)=\mathsf D^*F(T(z))|T'(z)|$.
Since $s,t\ge t_2$, both $T(A)$ and $T(B)$ lie in~$\overline\UD$. Therefore, as above, we can use the upper for ${\mathsf D^* F(T(z))}$ given in Lemma~\ref{LM_dF}, and
\begin{multline}\label{EQ_T-prime-from-above}
|T'(z)|=\frac{1-|z_0|^4}{\big|1+|z_0|^2+2\bar z_0 z\big|^2}\le\frac{1-|z_0|^4}{\big(1+|z_0|^2\,-\,2|z_0|\,|z|\big)^2}=\\=(1+|z_0|^2)\frac{1+|z_0|}{(1-|z_0|)^3}\le (1+3q)\frac{1+\sqrt{3q}}{(1-\sqrt{3q})^3}
\end{multline}
for all~$z\in\overline\UD$. Now \eqref{EQ_Loewner-dist-est} follows easily with $$M_2(q):=1\big/\big((1-k)^{1+q}(1-\sqrt{k})^4\big),\quad\text{ where $k:=3q$.}$$

It remains to prove the lower estimate in~\eqref{EQ_dist-Dt0-Dt1}. Since $\Gamma_{t_0}$ and $\Gamma_{t_1}$ are two nested Jordan curves, it is not difficult to see that there exists a straight line segment~$I\subset E$ joining $\Gamma_{t_0}$ with $\Gamma_{t_1}$ whose length is exactly $\dist(\Gamma_{t_0},\Gamma_{t_1})$. Here $E$ stands for the closure of the doubly connected domain bounded by $\Gamma_{t_0}$ and $\Gamma_{t_1}$. The length of $\gamma:=F^{-1}(I)$ is at least $1-e^{t_0-t_1}$. Therefore,
$$
1-e^{t_0-t_1}\le\dist(\Gamma_{t_0},\Gamma_{t_1})\max_{w\in I}\mathsf D^*(F^{-1})(w)=\frac{\dist(\Gamma_{t_0},\Gamma_{t_1})}{\min_{z\in \gamma}\mathsf D_*F(z)}.
$$
To complete the proof it is sufficient to use the lower bound in~\eqref{EQ_DF}  and take into account that $t_1-t_0\ge\tfrac12\log\big(2/(1+3q)\big)$, see the proof of Proposition~\ref{PR_subharmonic} in Sect.\,\ref{SS_proof-of-proposition-subharmonic}.
\end{proof}
\begin{lemma}\label{LM_diam}
For any $t>t_2$,
\begin{equation}\label{EQ_diam}
 M_3(q)e^{-t}\le\diam(L(\Gamma_t))\le M_4(q)e^{-t},
\end{equation}
where  $M_3(q)$ and $M_4(q)$ are positive constants depending only on~$q$.
\end{lemma}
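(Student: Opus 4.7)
The identity $L\circ G=F\circ T$ with $F(z):=G(e^{-t_0}z)$ gives
$$L(\Gamma_t)=\bigl\{F(T(e^{-t}e^{i\theta})):\theta\in[0,2\pi]\bigr\}.$$
By the defining property of $t_2$, for every $t>t_2$ the disk $\{|z|\le e^{-t}\}$ is mapped by $T$ into $\overline{\UD}$, so Lemma~\ref{LM_dF} provides two-sided bounds on $\mathsf D^*F$ and $\mathsf D_*F$ there, and \eqref{EQ_T-prime-from-above} bounds $|T'|$ on $\overline{\UD}$; all constants depend only on~$q$.

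For the upper bound in \eqref{EQ_diam}, the diameter is majorized by the length of the closed curve, which by the chain rule satisfies
$$\diam L(\Gamma_t)\le\int_0^{2\pi}\mathsf D^*F\bigl(T(e^{-t}e^{i\theta})\bigr)\,\bigl|T'(e^{-t}e^{i\theta})\bigr|\,e^{-t}\,\di\theta\le M_4(q)\,e^{-t}.$$

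For the lower bound I plan to combine two observations. First, since $g_t(z)=e^{-t}z+b_0(t)+O(1/z)$ as $z\to\infty$, the leading coefficient of $g_t$ at infinity is exactly $e^{-t}$; consequently, the complement $\overline{D_t}=\C\setminus g_t(\Delta)$ is a continuum of logarithmic capacity $e^{-t}$, and P\'olya's estimate $\mathrm{cap}(K)\le\diam(K)/2$ (with equality for disks) yields $\diam\Gamma_t=\diam\overline{D_t}\ge 2e^{-t}$. Second, I will show that $L$ is bi-Lipschitz on $\overline{D_{t_2}}$ with constants depending only on~$q$. Writing $L=F\circ T\circ G^{-1}$, the chain rule combined with Lemma~\ref{LM_dF} (for both $F$ and, through the inverse-function formula, $G^{-1}$) and \eqref{EQ_T-prime-from-above} (for~$T$) provides uniform two-sided bounds on the partial derivatives and Jacobian of $L$ on~$\overline{D_{t_2}}$. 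Together with the fact that $\overline{D_{t_2}}$ and its image under $L$ are Jordan domains with real-analytic boundary (hence quasiconvex), this makes $L$ bi-Lipschitz on $\overline{D_{t_2}}$ with constants of the desired type. Since $\Gamma_t\subset\overline{D_{t_2}}$ whenever $t>t_2$, we conclude $\diam L(\Gamma_t)\ge c_L(q)\,\diam\Gamma_t\ge M_3(q)\,e^{-t}$.

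The principal technicality is the lower Lipschitz estimate, because neither $\overline{D_{t_2}}$ nor $L(\overline{D_{t_2}})$ is a priori convex. I would handle it by splitting pairs $w_1,w_2\in\overline{D_{t_2}}$ into a local regime (where $|w_1-w_2|$ is smaller than the separation $\dist(\overline{D_{t_2}},\partial D_{t_0})$ provided by an argument analogous to Lemma~\ref{LM_Loewner-front-distance}, so the straight segment lies in $D_{t_0}$ and the pointwise bound on $|DL^{-1}|$ applies) and a global regime (handled trivially since both $w_1$ and $L^{-1}(L(w_1))$ lie in a bounded set of diameter comparable to~$e^{-t_2}$). This requires only bookkeeping with the explicit constants already obtained in Lemma~\ref{LM_dF} and Lemma~\ref{LM_Loewner-front-distance}, with no new idea.
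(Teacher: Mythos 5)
Your upper bound is fine and is essentially the paper's argument (modulo a slip: the correct parametrization is $L(\Gamma_t)=F\big(T(e^{t_0-t}e^{i\theta})\big)$, not $F\big(T(e^{-t}e^{i\theta})\big)$; the missing factor $e^{t_0}=(3q)^{-1/2}$ only affects constants). The capacity observation $\diam\Gamma_t\ge 2e^{-t}$ (leading coefficient $g_t'(\infty)=e^{-t}$ plus $\mathrm{cap}\le\tfrac12\diam$) is a legitimate alternative to the paper's direct computation, and is consistent with tools the paper itself invokes. The gap is in the step you dismiss as bookkeeping: the lower Lipschitz bound for $L$ on $\overline{D_{t_2}}$ with a constant depending only on~$q$. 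First, the mechanism you describe in the local regime does not produce such a bound: putting the straight segment $[w_1,w_2]$ inside $D_{t_0}$ and invoking pointwise bounds on $|DL^{-1}|$ (or, equivalently, a pointwise lower bound on $\mathsf D_*L$ along that segment) does not bound $|L(w_1)-L(w_2)|$ from below, because the image curve may backtrack; a displacement lower bound requires a path of controlled length \emph{in the image}, joining $L(w_1)$ and $L(w_2)$ inside the set where the inverse map has bounded derivative. Second, once you move to the image side, the split fails with the constants you actually have: Lemma~\ref{LM_dF} controls $F$ only on $\overline{\UD}$, i.e.\ $F^{-1}$ only on $\overline{D_{t_0}}$, and by the very choice of $t_2$ the circle $T(e^{t_0-t_2}\partial\UD)$ is internally tangent to $\partial\UD$, so for $t$ close to $t_2$ the curve $L(\Gamma_t)=F\big(T(e^{t_0-t}\partial\UD)\big)$ comes arbitrarily close to $\Gamma_{t_0}=\partial D_{t_0}$; hence no threshold depending only on~$q$ guarantees that the segment between two points of $L(\Gamma_t)$ stays where $F^{-1}$ is controlled. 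Your source-side threshold $\dist(\overline{D_{t_2}},\partial D_{t_0})$ is also not admissible: it vanishes when $z_0=0$ (then $t_2=t_0$ by \eqref{EQ_t_2}) and degenerates as $|z_0|\to0$. Finally, ``real-analytic boundary, hence quasiconvex'' yields a quasiconvexity constant depending on the particular domain, i.e.\ on~$f$ — precisely the dependence that must be avoided here.

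What fills this hole in the paper is a genuinely additional ingredient, not bookkeeping: $D_{t_0}=F(\UD)$ is a $k$-quasidisk with $k=3q$, and by the Gehring--Osgood theorem quasidisks are quasiconvex with a constant $c(k)$ depending only on~$k$; joining the two image points by such a curve $\gamma\subset D_{t_0}$ and estimating $\len(F^{-1}(\gamma))$ by $\len(\gamma)/\inf_{\UD}\mathsf D_*F$ (Lemma~\ref{LM_dF}) gives exactly the missing lower bound. (An alternative repair of your scheme would be to re-prove the estimates of Lemma~\ref{LM_dF} on a slightly larger disk $R(q)\UD$ with $1<R(q)<e^{t_0}$, so that the image-side local regime has a positive $q$-dependent threshold; but that too is a new step, not a consequence of the constants already stated.) As written, the proposal's lower bound does not close.
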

\begin{proof}
The upper bound in~\eqref{EQ_diam} can be obtained using the same method as in the proof of Lemma~\ref{LM_Loewner-front-distance}.
Let us obtain the lower bound in~\eqref{EQ_diam}. Denote $A:=\{z:|z|=e^{t_0-t}\}$. By a direct computation using~\eqref{EQ_T}, we find that
 $B:=T(A)\subset\UD$ is a circle of radius
\begin{equation*}
e^{t_0-t}\,\frac{1-|z_0|^4}{1+|z_0|^4+(2-4e^{2(t_0-t)})|z_0|^2}\ge e^{t_0-t}\,\frac{1-|z_0|^2}{1+|z_0|^2}\ge e^{t_0-t}\,\frac{1-3q}{1+3q},
\end{equation*}
where the last inequality is due to~\eqref{EQ_z_0-est}. 

Hence, $\diam B\ge 2e^{t_0-t}(1-k)/(1+k)$, where $k:=3q$, and our task reduces to finding a lower bound for $\diam(F(B))/\diam(B)$. The technique used to in Lemma~\ref{LM_Loewner-front-distance} does not apply directly to this case, because the longest straight line segment with the end-points in~$F(B)=L(\Gamma_t)$ does not have to lie in~$F(\UD)=D_{t_0}$. However, $D_{t_0}$ is a $k$-quasidisk and hence by a result of Gehring and Osgood, see e.g. \cite[\S8.1]{KariHag}, for any two points $w_1,w_2\in D_{t_0}$ there is a smooth curve $\gamma\subset D_{t_0}$ joining $w_1$ and $w_2$ such that $\len(\gamma)\le c(k)|w_2-w_1|$, where $c(k)>0$ is a constant depending only on~$k$. At the same time
$$
|F^{-1}(w_2)-F^{-1}(w_1)|\le\len(F^{-1}(\gamma))\le\frac{\len(\gamma)}{\inf_{z\in\UD}\mathsf D_*F(z)}.
$$
By choosing $w_j:=F(z_j)$, $j=1,2$, where $z_1$ and $z_2$ are the end-point of a diameter of~$B$, we obtain
\begin{eqnarray*}
  \diam(F(B))&\ge& |w_2-w_1|~\ge~\frac{\len(\gamma)}{c(k)}\\[1ex]&\ge&\frac{|z_2-z_1|\inf_{z\in\UD}\mathsf D_*F(z)}{c(k)}=\diam(B)\frac{\inf_{z\in\UD}\mathsf D_*F(z)}{c(k)}.
\end{eqnarray*}
Employing the lower estimate for $\mathsf D_*F$ given in Lemma~\ref{LM_dF} completes the proof.
\end{proof}

\begin{lemma}\label{LM_curvature}
There exist positive constants $M_5(q)$ and $M_6(q)$ depending only on~$q$ such that for all $t\ge t_2$
the curvature of $L(\Gamma_t)$ does not exceed $\kappa_0(q,t):=M_5(q)e^{t}+M_6(q)$.
\end{lemma}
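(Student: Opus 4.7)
The plan is to compute the curvature directly from the parametrization $\gamma(\theta):=\Phi(re^{i\theta})$, where $\Phi:=F\circ T$ and $r:=e^{t_0-t}$, since $L(\Gamma_t)=\Phi(\{|z|=r\})$ by construction. Setting $\sigma(\theta):=re^{i\theta}$, so $|\sigma'|=|\sigma''|=r$, the chain rule gives
\begin{align*}
 \gamma'(\theta)&=\Phi_z(\sigma)\sigma'+\Phi_{\bar z}(\sigma)\overline{\sigma'},\\
 \gamma''(\theta)&=\Phi_{zz}(\sigma)(\sigma')^2+2\Phi_{z\bar z}(\sigma)|\sigma'|^2+\Phi_{\bar z\bar z}(\sigma)(\overline{\sigma'})^2+\Phi_z(\sigma)\sigma''+\Phi_{\bar z}(\sigma)\overline{\sigma''},
\end{align*}
and then $|\kappa(\theta)|\le|\gamma''(\theta)|/|\gamma'(\theta)|^2$.

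The key observation is that holomorphicity of~$T$ allows me to express the formal derivatives of $\Phi$ at $\sigma$ through formal derivatives of $F$ at $w:=T(\sigma)$ multiplied by (conjugates of) $T'(\sigma)$ and $T''(\sigma)$; e.g. $\Phi_{zz}=(\partial^2F)(w)T'(\sigma)^2+(\partial F)(w)T''(\sigma)$ and analogously for the mixed and barred second derivatives. By~\eqref{EQ_z_0-est} one has $|z_0|\le\sqrt{3q}$, so the Möbius formula~\eqref{EQ_T} yields uniform bounds on $|T'|$ and $|T''|$ over~$\overline\UD$ (indeed over any compact set disjoint from the pole of~$T$) depending only on~$q$; compare~\eqref{EQ_T-prime-from-above}. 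Plugging these into the expressions for the formal derivatives of~$\Phi$ and then invoking Lemma~\ref{LM_est-second-formal} bounds each of $|\Phi_{zz}|,|\Phi_{z\bar z}|,|\Phi_{\bar z\bar z}|,|\Phi_z|,|\Phi_{\bar z}|$ at~$\sigma$ by a constant depending only on~$q$ times $|\partial F(w)|$. Consequently
$$
|\gamma''(\theta)|\le C_1(q)\,|\partial F(w)|\,(r^2+r).
$$

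For the denominator, I will use that $\gamma'(\theta)$ is the directional derivative of~$F$ at~$w$ in the direction of the complex number $T'(\sigma)\sigma'$, so $|\gamma'(\theta)|\ge r\,|T'(\sigma)|\,\mathsf D_*F(w)\ge C_2(q)\,r\,|\partial F(w)|$ by Lemma~\ref{LM_dF}. Combining the two estimates and using the lower bound $|\partial F(w)|\ge C_3(q)\,e^{-t_0}$ from the same lemma, I obtain
$$
|\kappa(\theta)|\le\frac{C_1(q)(r^2+r)}{C_2(q)^2\,r^2\,|\partial F(w)|}\le C_4(q)\Big(1+\tfrac{1}{r}\Big)e^{t_0}=C_4(q)e^{t_0}+C_4(q)e^{t_0}\cdot e^{t-t_0},
$$
which (recalling that $t_0=-\tfrac12\log(3q)$ depends only on~$q$) gives the required bound with appropriate $M_5(q),M_6(q)$.

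The only non-routine part of the argument is the bookkeeping in the reduction of the formal second derivatives of~$\Phi$ to pointwise multiples of $|\partial F(w)|$; once this is done, everything reduces to applying the already-established lemmas. The appearance of the $e^t$ summand in the final bound is forced by the term $\Phi_z\sigma''+\Phi_{\bar z}\overline{\sigma''}$ in~$\gamma''$, which encodes the curvature $1/r=e^{t-t_0}$ of the underlying concentric circle~$C_t$; the $M_6(q)$ summand absorbs the contribution of the purely second-order terms $\Phi_{**}(\sigma')^2$.
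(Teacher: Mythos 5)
Your argument is correct and follows essentially the same route as the paper: the curvature of $L(\Gamma_t)$ is computed from its parametrization as the image of a circle of radius comparable to $e^{-t}$, the second-order terms are controlled by Lemma~\ref{LM_est-second-formal}, the first derivative is bounded below via the quasiconformality bounds of Lemma~\ref{LM_dF}, and the small radius is what produces the $e^{t}$ summand. The only cosmetic difference is that the paper parameterizes the circle $T(\{z:|z|=e^{t_0-t}\})$ directly (a M\"obius image of a circle is again a circle of radius $\ge e^{t_0-t}(1-k)/(1+k)$), so it never needs derivatives of~$T$, whereas your chain rule through $T$ additionally uses upper bounds on $|T'|,|T''|$ and a lower bound on $|T'|$ (cf.~\eqref{EQ_T-prime-from-above} and~\eqref{EQ_Tprime-from-below}), all of which are indeed available with constants depending only on~$q$.
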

\begin{proof}
Denote $A:=\{z:|z|=e^{t_0-t}\}$. Then $L(\Gamma_t)=F(T(A))$.
As in the proof of Lemma~\ref{LM_diam}, we see that
 $T(A)\subset\UD$ is a circle of radius
\begin{equation}\label{EQ_radius}
\rho\ge e^{t_0-t}\frac{1-k}{1+k},\quad k:=3q.
\end{equation}

Parameterize $T(A)$ by $z(\theta)=z_*+\rho e^{i\theta}$, $\theta\in[0,2\pi]$.  Then $w(\theta)=F(z(\theta))$ is a parametrization of $L(\Gamma_t)$. The (signed) curvature of $L(\Gamma_t)$ at the point $w=w(\theta)$ equals
$$
\kappa(w)=\frac1{\big|w'(\theta)\big|}\,\Im\frac{\,w''(\theta)\,}{w'(\theta)}.
$$
By Lemma~\ref{LM_dF},
\begin{equation}\label{EQ_w-prime}
  |w'(\theta)|\ge |z'(\theta)|\,\mathsf D_* F(z(\theta))\ge \rho e^{-t_0}\frac{(1-k)^{1+q}}{(1+k^2)^2},\quad k:=3q.
\end{equation}

Moreover,
$$
\frac{\,w''(\theta)\,}{w'(\theta)}=i\,\frac{e^{i\theta}\partial F+e^{-i\theta}\bar\partial F+\rho\big(e^{2i\theta}\partial^2F -2\partial\bar\partial F +e^{-2i\theta}\bar\partial^2F\big)}%
{\vphantom{\int_0^1}e^{i\theta}\partial F-e^{-i\theta}\bar\partial F},
$$
where the partial derivatives of $F$ are to be evaluated at~$z(\theta)$. Taking into account that $F$ is $k$-q.c. and using Lemma~\ref{LM_est-second-formal},
we see that $$\big|{w''(\theta)}/{w'(\theta)}\big|\le K+\frac{4M(q)\rho}{1-k},\quad K:=\frac{1+k}{1-k}.$$

Combining the above inequality with~\eqref{EQ_radius} and~\eqref{EQ_w-prime} and taking into account that $k$ and $M(q)$ depend only on~$q$ leads to the desired conclusion.
\end{proof}

\begin{lemma}\label{LM_krug}
  Let $\Gamma$ be a convex $C^2$-smooth Jordan curve and  $\kappa_*$ the maximum of its curvature. If $C$ is a circle of radius~$R$ not exceeding $1/\kappa_*$ internally tangent to~$\Gamma$, then the open disk~$D$ bounded by~$C$ is a subset of the Jordan domain $\Omega$ bounded by~$\Gamma$.
\end{lemma}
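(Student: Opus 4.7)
Plan: Since $\Omega$ is a $C^2$-smooth convex Jordan domain, I will work with its support function $h:\mathbb{R}/2\pi\mathbb{Z}\to\mathbb{R}$ defined by $h(\theta):=\max_{x\in\overline\Omega}\langle x,u_\theta\rangle$, where $u_\theta:=(\cos\theta,\sin\theta)$ and we write $\Gamma(\theta)$ for the boundary point whose outward unit normal is $u_\theta$. Then $h\in C^2$ and the classical identity $h(\theta)+h''(\theta)=\rho(\theta)$ holds, where $\rho(\theta):=1/\kappa(\Gamma(\theta))$ is the radius of curvature; by hypothesis $\rho\geq 1/\kappa_*\geq R$. The open disk $D=B(c,R)$ has support function $h_D(\theta)=\langle c,u_\theta\rangle+R$, and since $\langle c,u_\theta\rangle''=-\langle c,u_\theta\rangle$, we have $h_D+h_D''\equiv R$. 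The inclusion $\overline D\subset\overline\Omega$ is equivalent to $h_D\leq h$ on $\mathbb{R}/2\pi\mathbb{Z}$.

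Set $g:=h-h_D$; subtracting the two identities gives
\[
  g''(\theta)+g(\theta)=\rho(\theta)-R\geq 0.
\]
Let $\theta_0$ be the parameter of the tangency point $p=\Gamma(\theta_0)$. Internal tangency means $c=p-Ru_{\theta_0}$, so $h_D(\theta_0)=\langle p,u_{\theta_0}\rangle-R+R=h(\theta_0)$, i.e.\ $g(\theta_0)=0$. Moreover, using $h'(\theta)=\langle\Gamma(\theta),u_\theta'\rangle$ (which follows from $\Gamma'(\theta)\perp u_\theta$) together with $u_{\theta_0}\perp u_{\theta_0}'$, one finds $h_D'(\theta_0)=\langle c,u_{\theta_0}'\rangle=\langle p,u_{\theta_0}'\rangle=h'(\theta_0)$, so that $g'(\theta_0)=0$ as well.

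Variation of parameters for the linear ODE $g''+g=\rho-R$ with these initial conditions yields
\[
  g(\theta)=\int_{\theta_0}^{\theta}\sin(\theta-s)\bigl(\rho(s)-R\bigr)\,ds.
\]
For $\theta\in[\theta_0,\theta_0+\pi]$ the integrand is non-negative (both factors), hence $g(\theta)\geq 0$; for $\theta\in[\theta_0-\pi,\theta_0]$ rewrite the integral as $\int_{\theta}^{\theta_0}(-\sin(\theta-s))(\rho(s)-R)\,ds$, where $-\sin(\theta-s)\geq 0$ for $s\in[\theta,\theta_0]$, so $g(\theta)\geq 0$ again. The interval $[\theta_0-\pi,\theta_0+\pi]$ is a full period, therefore $g\geq 0$ on $\mathbb{R}/2\pi\mathbb{Z}$, i.e.\ $h_D\leq h$. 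This gives $\overline D\subset\overline\Omega$; since $D$ is open, any point of $D\cap\partial\Omega$ would have a whole neighbourhood in $\overline\Omega$, contradicting its being a boundary point, so $D\subset\Omega$.

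The only delicate verification is the identity $g'(\theta_0)=0$, which encodes the internal tangency in the support-function language; once this is in place the argument is a standard Sturm-type comparison. A minor caveat is that at points where $\Gamma$ has vanishing curvature one has $\rho=+\infty$, but this only makes $\rho-R\geq 0$ easier and does not affect the integral representation away from such points.
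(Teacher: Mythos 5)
Your route is genuinely different from the paper's: the paper argues by contradiction, producing a circle of radius $R_0<1/\kappa_*$ internally tangent to $\Gamma$ at two distinct points and comparing $\Im(z_2-z_1)$ with the integral $\int\kappa_*^{-1}\beta'(t)\sin\beta(t)\,dt$ of the tangent-angle function along the arc between the tangency points, while you run the classical support-function (Blaschke rolling) comparison. Your argument is complete and correct whenever the curvature of $\Gamma$ is strictly positive: then the Gauss map is a homeomorphism with differentiable inverse, $h\in C^2$, $h+h''=\rho$, and the Duhamel representation together with $g(\theta_0)=g'(\theta_0)=0$ does the job; the computation of the tangency conditions and the sign analysis over a full period are all fine.

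The gap is the regularity you assume for $h$. The lemma only bounds the curvature from above; it may vanish, and a convex $C^2$ Jordan curve may even contain straight line segments. If $\Gamma$ contains a segment, the map $\theta\mapsto\Gamma(\theta)$ is not single-valued and $h$ fails to be differentiable at the corresponding normal angle; at an isolated zero of curvature, $h''$ becomes unbounded, so $h\notin C^2$ and the pointwise identity $h+h''=\rho$ — hence the variation-of-parameters formula, which you apply to a $C^2$ function $g$ — is not available. Dismissing vanishing curvature as a ``minor caveat'' that ``does not affect the integral representation'' begs the question: the representation is precisely what needs justification there. The argument is repairable: in the strictly convex case $\rho$ is still locally integrable in $\theta$ (its integral over an angular window is the length of the corresponding boundary arc), so $h'$ is absolutely continuous and the formula survives with $h''\in L^1$; in general one should use that $h''+h$ is a nonnegative measure which, because $\kappa\le\kappa_*$ makes the Gauss map $\kappa_*$-Lipschitz in arclength, dominates $\kappa_*^{-1}\,d\theta\ge R\,d\theta$, and replace $g'(\theta_0)=0$ by the quadratic lower bound $g(\theta)\ge -R\bigl(1-\cos(\theta-\theta_0)\bigr)$, which follows from $h(\theta)\ge\langle p,u_\theta\rangle$; alternatively, approximate $\Omega$ by smooth strictly convex domains and pass to the limit. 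The paper's proof avoids this issue altogether, since it works with the tangent angle along the curve itself and uses only the inequality $\beta'(t)\le\kappa_*|\varphi'(t)|$, which is exactly what the stated hypotheses provide.
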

\begin{proof}
  Clearly, if we prove the statement of Lemma~\ref{LM_krug} for all $0<R<1/\kappa_*$, then it also holds with $R=1/\kappa_*$. Suppose that it fails for some $R<1/\kappa_*$. Then there exists $R_0\in(0,1/\kappa_*)$ and a circle $C_0$ of radius $R_0$ which is internally tangent to~$\Gamma$ at two distinct points $z_1$ and $z_2$. Using if necessary translations, rotations, and/or reflections, we may suppose that $C_0$ is centered at the origin and that $z_1=R_0e^{-i\theta}$ and $z_2=R_0e^{i\theta}$ for some $\theta\in(0,\pi/2]$.

Let $\Gamma_0$ be the arc of~$\Gamma$ going in the counterclockwise direction from~$z_1$ to~$z_2$. Let $z=\varphi(t)$, $t\in[-1,1]$, be a $C^2$-parametrization of~$\Gamma_0$, with $z_1=\varphi(-1)$ and $z_2=\varphi(1)$. Then the $C^1$-function $\beta(t):=\arg\varphi'(t)$ is non-decreasing, because $\Gamma$ is convex, and $\beta(\pm1)=\pi/2\pm\theta$ because $\Gamma$ is tangent to~$C$ at $z_1$ and~$z_2$. It follows that $0\le\beta(t)\le \pi$ for all $t\in[-1,1]$.

Note also that  $\beta'(t)\le \kappa(t)|\varphi'(t)|\le \kappa_*|\varphi'(t)|$, where $\kappa(t)$ stands for the curvature of~$\Gamma$ at~$z=\varphi(t)$. Therefore,
\begin{multline*}
2R_0\sin\theta=\Im(z_2-z_1)=\int_{-1}^{1}|\varphi'(t)|\sin\beta(t)\,\di t\ge\\\ge \int_{-1}^{1}\kappa_*^{-1}\beta'(t)\sin\beta(t)\,\di t=\kappa_*^{-1}\big(\cos\beta(-1)-\cos\beta(1)\big)> 2R_0\sin\theta.
\end{multline*}
The contradiction we have obtained proves the lemma.
\end{proof}

\begin{lemma}\label{LM_tangent-disk}
 There exists a constant $\varepsilon_0(q)>0$ depending only on~$q$ such that for any $t>t_1$ and any $w_0\in L(\Gamma_t)$ the open disk of radius $\varepsilon_0(q)$ with the boundary tangent to $L(\Gamma_t)$ at~$w_0$ from outside is entirely contained in the unbounded component of $\C\setminus L(\Gamma_t)$.
\end{lemma}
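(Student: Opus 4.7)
The plan is to split the range $t>t_1$ at a threshold $\hat t(q)>t_1$ depending only on $q$, and argue in two regimes.

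For $t\ge \hat t(q)$, I would show that $L(\Gamma_t)$ is convex. Parametrizing $L(\Gamma_t)$ as $w(\theta)=F(z_*(t)+\rho(t)e^{i\theta})$ via the circle $T(A_t)$ of radius $\rho(t)$ (cf.\ the proof of Lemma~\ref{LM_curvature}), the signed curvature equals $\Re\Phi(\theta)/|w'(\theta)|$ with
$$
\Phi(\theta)=\frac{e^{i\theta}\partial F+e^{-i\theta}\bar\partial F}{e^{i\theta}\partial F-e^{-i\theta}\bar\partial F}+\rho(t)\,\frac{e^{2i\theta}\partial^2 F-2\partial\bar\partial F+e^{-2i\theta}\bar\partial^2 F}{e^{i\theta}\partial F-e^{-i\theta}\bar\partial F}.
$$
Writing $a:=\partial F$, $b:=\bar\partial F$ with $|b|/|a|\le k:=3q$, the real part of the leading term works out to $(|a|^2-|b|^2)/|ae^{i\theta}-be^{-i\theta}|^2\ge(1-k)/(1+k)$, while Lemma~\ref{LM_est-second-formal} bounds the remainder in absolute value by $4\rho(t)M(q)/(1-k)$. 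Since the proof of Lemma~\ref{LM_diam} gives $\rho(t)\to 0$ exponentially as $t\to\infty$ at a rate depending only on $q$, one can choose $\hat t(q)$ so that $\Re\Phi>0$ uniformly in $\theta$ whenever $t\ge \hat t(q)$. For a convex Jordan curve the open outward tangent half-plane at any $w_0$ is contained in the unbounded component of $\C\setminus L(\Gamma_t)$, so an open disk of any radius tangent to $L(\Gamma_t)$ at $w_0$ from outside sits inside that half-plane, and hence inside the unbounded component.

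For the bounded range $t_1<t\le \hat t(q)$, Lemma~\ref{LM_curvature} yields a uniform curvature bound $|\kappa|\le\kappa_*:=\kappa_0(q,\hat t(q))$ depending only on $q$. In the arclength parametrization $\gamma(s)$ of $L(\Gamma_t)$ at $w_0$ with unit tangent $T$ and outward normal $n_{w_0}$, the bound $|\gamma''|\le\kappa_*$ gives $|\gamma(s)-w_0-sT|\le\kappa_* s^2/2$, and a short computation shows that for $\varepsilon\le 1/(4\kappa_*)$ the open disk $\{w:|w-(w_0+\varepsilon n_{w_0})|<\varepsilon\}$ does not meet $\gamma(s)$ on $0<|s|\le 1/\kappa_*$. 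To extend this local estimate to a global one I would combine the uniform bounds on $\mathsf D_*F$ and $\mathsf D^*F$ from Lemma~\ref{LM_dF} with the Gehring\,--\,Osgood length estimate for the $k$-quasidisk $D_{t_0}$ (as already used in the proof of Lemma~\ref{LM_diam}), obtaining a constant $c(q)>0$ with $|F^{-1}(w_1)-F^{-1}(w_2)|\le c(q)|w_1-w_2|$ on $\overline{D_{t_0}}$. Since $T(A_t)$ is a circle whose radius is bounded below by some $\rho_*(q)$ in this range, the chord\,--\,arclength inequality on circles together with $\mathsf D^*F\le C(q)$ then forces every point of $L(\Gamma_t)$ at Euclidean distance at most $2\varepsilon$ from $w_0$ to lie within arclength $C'(q)\varepsilon$ of $w_0$ along $L(\Gamma_t)$.

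Taking $\varepsilon_0(q)$ to be the minimum of $1/(4\kappa_*)$, $\rho_*(q)/c(q)$ and $1/(C'(q)\kappa_*)$, a constant depending only on $q$, reduces the global step to the local one; that the resulting disk lies in the unbounded rather than the bounded component follows because its center $w_0+\varepsilon_0(q)n_{w_0}$ lies immediately on the outward side of $L(D_t)$. The same $\varepsilon_0(q)$ works in the large-$t$ regime by convexity. I expect the main obstacle to be this global step in the bounded range: the local curvature-based estimate is elementary, but ruling out uniformly in $f\in\clS_q$ the possibility that the exterior tangent disk wraps around and meets a distant part of $L(\Gamma_t)$ rests on chord\,--\,arclength control, which in turn relies on the Gehring\,--\,Osgood quasidisk estimate together with the bilipschitz-type behaviour of $F^{-1}$ on $\overline{D_{t_0}}$.
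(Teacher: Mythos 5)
Your proposal is correct, but it follows a genuinely different route from the paper. The paper avoids any case split in $t$ and any chord--arc considerations: for every $t>t_1$ and every boundary point it attaches, at the preimage point on the circle $T(A_t)\subset e^{t_0-t_1}\UD$, a disk of a fixed radius $\rho_0(q)\le 1-e^{t_0-t_1}$ tangent from outside and contained in $\UD$; its $F$-image $U_t(z,\rho_0)$ is an auxiliary domain tangent to $L(\Gamma_t)$ at the given point and contained in the unbounded component, which is shown to be convex by exactly the curvature inequality you use (with smallness of $\rho_0(q)$ playing the role that largeness of $t$ plays in your argument), has boundary curvature at most some $\kappa_*(q)$ by the computation of Lemma~\ref{LM_curvature}, and then Lemma~\ref{LM_krug} inscribes the round disk of radius $\varepsilon_0(q)=1/\kappa_*(q)$ tangent at the point. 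Your dichotomy replaces this: for $t\ge\hat t(q)$ you make $L(\Gamma_t)$ itself convex (your estimate $\Re$ of the leading term $\ge(1-k)/(1+k)$ is precisely the paper's, and the needed upper bound $\rho(t)\le C(q)e^{-t}$ does follow, e.g.\ from \eqref{EQ_T-prime-from-above}), while for $t_1<t\le\hat t(q)$ you combine the uniform curvature bound with a Taylor-type local exclusion and a chord--arc estimate with constant depending only on $q$, obtained from the Gehring--Osgood quasiconvexity of $D_{t_0}$, the bounds on $\mathsf D_*F$, $\mathsf D^*F$ from Lemma~\ref{LM_dF}, and the universal chord--arc property of circles (so the lower bound $\rho_*(q)$ on the radius is not actually needed there). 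Each of these steps checks out, including the final connectivity argument placing the disk in the unbounded component, though the phrase about the center lying ``immediately'' outside should be replaced by the observation that points $w_0+\delta n_{w_0}$ with small $\delta>0$ lie both in the open disk and in the exterior component; also note $t_1$ depends on $f$, but since $t_1\le t_0+\tfrac12\log 2$ a threshold $\hat t(q)$ depending only on $q$ can indeed be chosen above it. What the paper's construction buys is uniformity and brevity (no global self-approach issue ever arises, since the disk is inscribed in the auxiliary convex domain); what your route buys is that it dispenses with Lemma~\ref{LM_krug} and the auxiliary domains at the cost of invoking the quasidisk chord--arc machinery already present in the proof of Lemma~\ref{LM_diam}.
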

\begin{proof}
Fix $t>t_1$. Recall that $L\circ F=F\circ T$, with $T$ mapping $e^{t_0-t_1}\UD$ onto itself. Therefore, $L(\Gamma_t)$ is the image of a circle~$C(t)\subset e^{t_0-t_1}\UD$ w.r.t. $F$. It follows that for any $\rho\in(0,1-e^{t_0-t_1}]$ and any point $z\in C(t)$, the unbounded component of $\C\setminus L(\Gamma_t)$ contains a smooth Jordan domain~$U_t(z,\rho)$ with $L(\Gamma_t)$ and $\partial U_t(z,\rho)$ tangent at the point~$F(z)$ and such that $F^{-1}(U_t(z,\rho))$ is a disk of radius~$\rho$ lying together with its boundary in~$\UD$.

Since $F$ is a diffeomorphic $k$-q.c. map,
$$
|\bar\partial F(z)|\le k|\partial F(z)|\quad\text{and}\quad\Re\frac{\,e^{i\theta}\partial F(z)+e^{-i\theta}\bar\partial F(z)\,}{\vphantom{\int_0^1}e^{i\theta}\partial F(z)-e^{-i\theta}\bar\partial F(z)}\ge \frac{1-k}{1+k}
$$
for all $z\in\UD$. Arguing as in the proof of Lemma~\ref{LM_curvature}, we see that the domain~$U_t(z,\rho)$ is convex whenever
$$
\Re\frac{e^{i\theta}\partial F+e^{-i\theta}\bar\partial F}%
{\vphantom{\int_0^1}e^{i\theta}\partial F-e^{-i\theta}\bar\partial F}~\ge~
\left|\frac{\rho\big(e^{2i\theta}\partial^2F -2\partial\bar\partial F +e^{-2i\theta}\bar\partial^2F\big)}%
{\vphantom{\int_0^1}e^{i\theta}\partial F-e^{-i\theta}\bar\partial F}\right|.
$$
Thanks to Lemma~\ref{LM_est-second-formal}, the latter inequality holds provided
$$
\rho\le \rho_1(q):=\frac{(1-k)^2}{4(1+k) M(q)}.
$$

Note that by~\eqref{EQ_t_1} and~\eqref{EQ_z_0-est}, we have $1-e^{t_0-t_1}\ge 1-\sqrt{(1+3q)/2}=:\rho_2(q)>0$.
Now set $\rho=\rho_0(q):=\min\{\rho_1(q),\rho_2(q)\}>0$. Using again the proof of Lemma~\ref{LM_curvature}, we see that the curvature of $\partial U_t(z,\rho_0)$ does not exceed some $\kappa_*(q)$ depending only on~$q$.
Applying Lemma~\ref{LM_krug} for $\Gamma:=\partial U_t(z,\rho_0)$ leads to the desired conclusion with $\varepsilon_0(q):=1/\kappa_*(q)$.
\end{proof}

\subsection{Proof of Theorem~\protect{\ref{TH_MAIN}}}\label{SS_main-proof}\newcommand{\STEP}[1]{\vskip1ex\noindent{\bf Step~#1.}~}
We use the notation introduced in Sect.\,\ref{SS_main-construction}\,--\,\ref{SS_dist-diam}.
In particular, for $t> t_1$ we denote by $\tilde g_t$  the conformal map of~$\Delta$, normalized as usual by $\tilde g_t(\infty)=\infty$, $\tilde g_t'(\infty)>0$, onto the unbounded component of $\ComplexE\setminus L(\Gamma_t)$, where $\Gamma_t:=\partial G(e^{-t}\UD)=\partial g_t(\Delta)$.

\STEP1 Fix some $t>t_1$. Let us obtain a lower estimate of $|\tilde g'_t(z)|$ for $z\in\partial\Delta$. Denote $\rho(t):=\tilde g_t'(\infty)$. Note that $\partial \tilde g_t(\Delta)=L(\Gamma_t)=G\big(e^{-t_0}T(\{z:|z|=e^{t_0-t}\})\big)$. Hence $\partial \tilde g_t(\Delta)$ is a $k$-quasidisk with $k:=3q$. It follows, see e.g. \cite[Lemma~8.1.1 on p.\,100]{KariHag}, that
$$
 g:={\rho(t)}^{-1}\tilde g_t\in\Sigma(k'),\quad\text{where $k':=2k/(1+k^2)$.}
$$
Therefore, we may apply Corollary~\ref{CR_Henkin} with $k'$ substituted for~$k$ and with $u(w):=\varphi\big(\rho(t)w\big)-e^{a(q)(t-t_0)}$, where $\varphi$ and $a(q)$ are defined in Proposition~\ref{PR_subharmonic}. The function $u$ is subharmonic in ${U:={\rho(t)}^{-1}D_{t_0}}$.

Since the curve $L(\Gamma_t)$ lies inside $L(\Gamma_{t_1})=\Gamma_{t_1}$, we have $\rho(t)<\rho(t_1)=e^{-t_1}<1$ and
$$
\dist(\partial U,\partial g(\Delta))={\rho(t)}^{-1}\dist(\Gamma_{t_0},L(\Gamma_t))> \dist(\Gamma_{t_0},\Gamma_{t_1})\ge M_*(q).
$$
The last inequality holds by Lemma~\ref{LM_Loewner-front-distance}. It follows that the hypothesis of Corollary~\ref{CR_Henkin} is satisfied with $\mu:=\min\{M_*(q),8\}$.

Note that $\tfrac14\diam(\tilde g_t(\Delta))\le\rho(t)\le\tfrac12\diam(\tilde g_t(\Delta))$, see e.g. \cite[\S5.2,\,\S5.3]{Ransford}. Hence by Lemma~\ref{LM_diam},
\begin{equation}\label{EQ_est-capacity}
\frac{M_3(q)}{4}e^{-t}\le\rho(t)\le\frac{M_4(q)}{2}e^{-t}.
\end{equation}

Now we can estimate the denominator in~\eqref{EQ_Henkin1}. Namely, we are going to obtain an upper bound for $|\nabla u|$ on $\partial g(\Delta)={\rho(t)}^{-1}L(\Gamma_t)$.

Note that $\varphi(w)=\phi(w)^{-a(q)}-e^{a(q)(t-t_0)}$ for all $w\in D_{t_0}$, where $\phi:=|(F\circ T)^{-1}|$ and $F(z):=G(e^{-t_0}z)$. Hence, for any $w\in \partial g(\Delta)$, we have:
\begin{multline*}
\big|\nabla u(w)\big|=\rho(t) \big|\nabla\varphi\big(\rho(t)w\big)\big|=\\ =a(q)\rho(t)\phi\big(\rho(t)w\big)^{-a(q)-1}\,\big|\nabla\phi\big(\rho(t)w\big)\big|\le a(q)\rho(t)\phi\big(\rho(t)w\big)^{-a(q)-1}\frac{|(T^{-1})'(z)|}{\mathsf D_*F(z)},
\end{multline*}
where $z:=F^{-1}\big(\rho(t)w\big)\in\UD$.

Note that by construction for any $w\in \partial g(\Delta)$, $\phi\big(\rho(t)w\big)=e^{t_0-t}$. Moreover, for all $z\in\UD$,
$$
\big|(T^{-1})'(z)\big|\le(1+|z_0|^2)\frac{1-|z_0|}{(1+|z_0|)^3}\le (1+3q)\frac{1+\sqrt{3q}}{(1-\sqrt{3q})^3}.
$$
Therefore, taking into account~\eqref{EQ_est-capacity} and~\eqref{EQ_DF}, we may conclude that there exists a constant $M_7(q)$ depending only on~$q$ such that
\begin{equation}\label{EQ_grad-u}
\big|\nabla u(w)\big|\le a(q) M_7(q) e^{a(q)(t-t_0)}\quad \text{for all~$w\in\partial g(\Delta)$.}
\end{equation}

Let us now estimate the numerator in~\eqref{EQ_Henkin1}.
According to Lemma~\ref{LM_Loewner-front-distance} and the lower estimate in~\eqref{EQ_est-capacity}, if $s\in[t_2,t)$ and
\begin{equation}\label{EQ_s-t}
e^{t-s}-1\le\frac{M_3(q)\alpha(k')\mu^{K'}}{4M_2(q)},\quad K':=\frac{1+k'}{1-k'}=K^2,
\end{equation}
where $\alpha(\cdot)$ is defined in the statement of Corollary~\ref{CR_Henkin}, then $\mathcal B_g\big(\alpha(k')\mu^{K'}\big)$ lies in the unbounded component of $\Complex\setminus{\rho(t)}^{-1}L(\Gamma_s)$. Since on ${\rho(t)}^{-1}L(\Gamma_s)$ the function~$u$ is equal identically to~$u_s:=e^{a(q)(s-t_0)}-e^{a(q)(t-t_0)}$, we conclude that under condition~\eqref{EQ_s-t} the infimum in~\eqref{EQ_Henkin1} is greater or equal to
$
-u_s=e^{a(q)(t-t_0)}\big(1-e^{-a(q)(t-s)}\big).
$
Recall that
$$
 t-t_2>t_1-t_2=\tfrac12\log\frac{2(1+|z_0|^2)}{(1+|z_0|)^2}\ge\frac12\log\frac{2(1+3q)}{(1+\sqrt{3q})^2}>0,
$$
see~\eqref{EQ_t_1}, \eqref{EQ_t_2}, and~\eqref{EQ_z_0-est}. Choosing the smallest $s\in[t_2,t)$ satisfying~\eqref{EQ_s-t} and taking into account that $a(q)$,  $\mu=\min\{M_*(q),8\}$, as well as $k$ and $k'$, depend only on~$q$, we conclude that the numerator of~\eqref{EQ_Henkin1} is bounded from below by~$M_8(q)e^{a(q)(t-t_0)}$, where $M_8(q)>0$ depends only on~$q$. Thus, taking into account~\eqref{EQ_grad-u}, from~\eqref{EQ_Henkin1} we obtain
\begin{equation}\label{EQ_STEP1}
  |\tilde g_t'(z)|\ge \frac{M_{9}(q)}{a(q)}\tilde g_t'(\infty)\ge \frac{M_{9}(q)M_{3}(q)}{4a(q)}e^{-t}\quad \text{for all $z\in\partial\Delta$ and all $t>t_1$},
\end{equation}
where $M_9(q)>0$ depends only on~$q$.

\STEP2  Now we obtain an upper estimate for $|\tilde g'_t|$ for $t>t_1$. Again consider $$g:={\rho(t)}^{-1}\tilde g_t\in\Sigma(k'),\quad\rho(t):=\tilde g'_t(\infty).$$ By Lemma~\ref{LM_tangent-disk} and the right inequality in~\eqref{EQ_est-capacity},  the function~$g$ satisfied the hypothesis of Proposition~\ref{PR_derivative-from-ABOVE} with any point~$w_0\in\partial g(\Delta)$ and with any $\varepsilon>0$ not exceeding $\rho(t)^{-1}\varepsilon_0(q)$. By~\eqref{EQ_est-capacity}, $\rho(t)^{-1}\ge 2e^t/M_4(q)>2/M_4(q)$. Therefore, we can take $\varepsilon:=2\varepsilon_0(q)/M_4(q)$. Taking into account that when $g'$ exists on the boundary, it clearly coincides with $\angle g'$,  by Proposition~\ref{PR_derivative-from-ABOVE} we have
\begin{equation}\label{EQ_STEP2}
  |\tilde g_t'(z)|\le \mathcal M(\varepsilon,k')\tilde g_t'(\infty)\le M_{10}(q)e^{-t}\quad \text{for all $z\in\partial\Delta$, all $t>t_1$},
\end{equation}
and some $M_{10}(q)$ depending only on~$q$.

\STEP3 Now we will see that $\tilde g_t$ satisfies the Loewner\,--\,Kufarev equation and find bounds for the real part of Herglotz function.

Applying  Proposition~\ref{PR_C2-homotopy} with $\zeta\mapsto1/F(T(e^{t_0}/\zeta))$, $a:=t_1<|\zeta|<b:=+\infty$, substituted for $\Psi$, we see that $\tilde g_t$ is differentiable w.r.t. $t$ for all~$t>t_1$ and that it satisfies the Loewner\,--\,Kufarev PDE in~$\Delta$,
$$
 \partial \tilde g_t(z)/\partial t = z\tilde g_t'(z)p(z,t),\quad t>t_1,~z\in\Delta,
$$
with the Herglotz fuction $p:\Delta\times[0,+\infty)\to\Complex$, $\Im p(\infty,t)=0$, determined for each fixed~$t>t_1$ by the real part its continuous extension to~$\partial\Delta$:
\begin{multline}\label{EQ_Re-Herglotz-in-Delta}
  \Re p(e^{i\theta},t)=\\=\frac{1}{|\tilde g'_t(e^{i\theta})|} \Im\left(\overline{\frac{\partial F\big(T(e^{t_0-t}e^{i\tau})\big)}{\partial \tau}}\,\frac{\partial F\big(T(e^{t_0-t}e^{i\tau})\big)}{\partial t}\right)\left|\frac{\partial F\big(T(e^{t_0-t}e^{i\tau})\big)}{\partial \tau}\right|^{-1}
\end{multline}
for all~$\theta\in[0,2\pi]$, where $\tau=\tau(\theta)$ satisfies $F\big(T(e^{t_0-t}e^{i\tau})\big)=\tilde g_t(e^{i\theta})$. In particular,
$$
  \Re p(e^{i\theta},t)\le\frac{1}{|\tilde g'_t(e^{i\theta})|}\left|\frac{\partial F\big(T(e^{t_0-t}e^{i\tau})\big)}{\partial t}\right|.
$$
It follows easily with the help of~\eqref{EQ_STEP1}, \eqref{EQ_T-prime-from-above}, and Lemma~\ref{LM_dF} that
\begin{equation}\label{EQ_STEP3}
  \Re p(e^{i\theta},t)\le M_{11}(q)\quad \text{for all~$t>t_1$ and all~$\theta\in[0,2\pi]$},
\end{equation}
where $M_{11}(q)$ is a constant depending only on~$q$.

A lower estimate for $\Re p$ is a bit more tricky. We have to use the fact that $F$ is a smooth $k$-q.c. map. In particular, if $\eta_1$ and $\eta_2$ are images of some $\xi\in\Complex$ and $i\xi$ w.r.t. the differential of~$F$, respectively, then ${\Im(\overline{\eta_1}\,\eta_2)\ge (1-k^2)(1+k^2)^{-1}|\eta_1|\,|\eta_2|}$. Taking into account that
\begin{equation}\label{EQ_Tprime-from-below}
|T'(z)|\ge(1+|z_0|^2)\frac{1-|z_0|}{(1+|z_0|)^3}\ge(1+3q)\frac{(1-\sqrt{3q})}{(1+\sqrt{3q})^3} \quad\text{for all~$z\in\UD$},
\end{equation}
with the help of~\eqref{EQ_STEP2} and Lemma~\ref{LM_dF}, we obtain:
\begin{equation}\label{EQ_STEP3a}
  \Re p(e^{i\theta})\ge \frac{1-k^2}{1+k^2}\,\frac{\,\mathsf D_* F\big(T(e^{t_0-t}e^{i\tau})\big)\,\big|e^{t_0-t}T'(e^{t_0-t}e^{i\tau})\big|\,}{|\tilde g'_t(e^{i\theta})|}\ge M_{12}(q)>0
\end{equation}
for all~$t>t_1$ and all~$\theta\in[0,2\pi]$, where $M_{12}(q)$ depends only on~$q$.

\STEP4 Now we have to find an estimate for the modulus of continuity of $\Re p$ on $\partial\Delta$. Fix $t>t_1$. Denote $z(\tau):=e^{t_0-t}e^{i\tau}$.  A simple calculation shows that
$$
\Phi(\tau):=-ie^{t-t_0}\frac{\partial F\big(T(z(\tau))\big)}{\partial \tau}=e^{i\tau}T'(z(\tau))\,\partial F\big(T(z(\tau))\big)-\overline{e^{i\tau}T'(z(\tau))}\,\bar\partial F\big(T(z(\tau))\big).
$$
From~\eqref{EQ_Re-Herglotz-in-Delta} we obtain
\begin{multline}\label{EQ_step4a}
  \log|\tilde g'_t(e^{i\theta})|+\log\Re p(e^{i\theta},t)= \\
  =\log J_F\big(T(z(\tau))\big)+t_0-t+2\Re\log T'(z(\tau))-\Re\log \Phi(\tau)=:V(\tau),
\end{multline}
where $\tau$ and $\theta$ are related, as  above, by the equality   $F\big(T(z(\tau))\big)=\tilde g_t(e^{i\theta})$. With $\tau$  regarded as a function of~$\theta$, we have $\di \tau/\di\theta=-e^{t-t_0}|\tilde g'_t(e^{i\theta})|/|\Phi(\tau)|$. Hence the l.h.s. of~\eqref{EQ_step4a} is differentiable function of~$\theta$ and
$$
\left|\frac{\partial}{\partial\theta}\Big(\log|\tilde g'_t(e^{i\theta})|+\log\Re p(e^{i\theta},t)\Big)\right|=
e^{t-t_0}\frac{|\tilde g'_t(e^{i\theta})|}{|\Phi(\tau)|}|V'(\tau)|\le\frac{e^{t-t_0}|\tilde g'_t(e^{i\theta})|}{\mathsf D_* F\big(T(z(\tau))\big)}\,\frac{|V'(\tau)|}{|T'(z(\tau))|}
$$
With the help of~\eqref{EQ_STEP2}  and Lemma~\ref{LM_dF}, it follows that
\begin{equation}\label{EQ_est1}
\left|\frac{\partial}{\partial\theta}\Big(\log|\tilde g'_t(e^{i\theta})|+\log\Re p(e^{i\theta},t)\Big)\right| \le M_{13}(q)\frac{V'(\tau)}{\big|T'(z(\tau))\big|}
\end{equation}
for some constant~$M_{13}(q)$ depending only on~$q$.

Now let us estimate $|V'(\tau)|/\big|T'(z(\tau))\big|$.
Using inequality~\eqref{EQ_est-derivative-of-Jacobian} in Lemma~\ref{LM_est-second-formal} and taking into account that $$J_F=|\partial F|^2-|\bar\partial F|^2\ge(1-k^2)|\partial F|^2,$$ we obtain
\begin{equation}\label{EQ_est2}
  \frac{1}{\big|T'(z(\tau))\big|}\,\left|\frac{\di}{\di\tau}\log J_F\big(T(z(\tau))\big)\right|\le \frac{8M(q)}{1-k^2}e^{t_0-t}.
\end{equation}
Moreover, it is straightforward to check that
\begin{equation}\label{EQ_est3}
  \frac{1}{\big|T'(z(\tau))\big|}\,\left|\frac{\di}{\di\tau}\log T'(z(\tau))\right|=e^{t_0-t}\frac{\,|T''(z(\tau))|^{\hphantom{1}}}{|T'(z(\tau))|^2}
\end{equation}
and that
\begin{equation}\label{EQ_est3a}
\frac{\,|T''(z(\tau))|^{\hphantom{1}}}{|T'(z(\tau))|^2}= 4|z_0|\,\frac{\big|1+|z_0|^2+2\bar z_0z\big|}{1-|z_0|^4}\le 4\,\frac{\sqrt{3q}(1+\sqrt{3q})}{(1-\sqrt{3q})^3}.
\end{equation}
Again by a straightforward computation,
\begin{multline*}
  \Phi'(\tau)=ie^{i\tau}T'\partial F+i\overline{e^{i\tau}T'}\,\bar\partial F~+\\
  +~e^{t-t_0}\Big(e^{2i\tau}T''\partial F-\overline{e^{2i\tau}T''}\,\bar\partial F~+~ \big(e^{i\tau}T'\big)^2\partial^2 F-\overline{\big(e^{i\tau}T'\big)^2}\,\bar\partial^2 F\Big),
\end{multline*}
where $T$ and its derivatives are calculated at the point~$z(\tau)$, while the derivatives of~$F$ are calculated at~$T(z(\tau))$.
Using Lemma~\ref{LM_est-second-formal} and bearing in mind that $|\bar\partial F|\le|\partial F|$, form the above formula we get:
\begin{equation}\label{EQ_est4pre}
 \frac{\Phi'(\tau)}{|T'|}~\le~2\,\big|\partial F\big| \Big(1+e^{t_0-t}\Big(M(q)\big|T'\big|+\big|T''\big|/\big|T'\big|\Big)\Big).
\end{equation}
Since $|\Phi(\tau)|\ge |T'|\big(|\partial F|-|\bar\partial F|\big)\ge(1-k)\,\big|T'\big|\,\big|\partial F\big|,
$
from~\eqref{EQ_est4pre} it follows that
\begin{equation}\label{EQ_est4}
 \frac{1}{|T'(z(\tau))|}\,\left|\frac{\di}{\di\tau}\log\Phi(\tau)\right|~\le~\frac{2}{1-k}\, \left(\frac{1}{|T'(z(\tau))|}+e^{t_0-t}\Big(M(q)+\frac{\,|T''(z(\tau))|^{\hphantom{1}}}{|T'(z(\tau))|^2}\Big)\right)\!.
\end{equation}

Combining \eqref{EQ_est1}\,--\,\eqref{EQ_est3a}, \eqref{EQ_est4}, and~\eqref{EQ_Tprime-from-below} and bearing in mind that~$t>t_1>t_0$, we see that there exists a constant $M_{14}(q)>0$ depending only on~$q$ such that
\begin{equation}\label{EQ_step4b}
  \left|\frac{\partial}{\partial\theta}\Big(\log|\tilde g'_t(e^{i\theta})|+\log\Re p(e^{i\theta},t)\Big)\right|\le M_{14}(q)\quad \text{for all~$t>t_1$ and~$\theta\in[0,2\pi]$.}
\end{equation}

To complete Step 4, it remains to estimate the modulus of continuity of $\theta\mapsto \log|\tilde g'_t(e^{i\theta})|$. Denote by $\omega$ the modulus of continuity of the tangent unit vector  $\beta(s)$ to $\partial\tilde g_t(\Delta)$ regarded as a function of the length parameter~$s$. Then $\omega(\delta)\le\kappa_0(q,t)\delta$, where $\kappa_0(q,t)$ is the upper bound for the curvature of $\partial\tilde g_t(\Delta)$ given in Lemma~\ref{LM_curvature}. Following the argument from~\cite{Warschawski1961}, for the modulus of continuity of $\theta\mapsto \arg\tilde g'_t(e^{i\theta})$ denoted by~$\omega_0$, we have
\begin{equation}\label{EQ_modulus-arg}
  \omega_0(\delta)\le \delta+\omega\big(\delta\max|\tilde g'_t(e^{i\theta})|\big)\le \delta\Big(1+ \kappa_0(q,t)M_{10}(q)e^{-t}\Big),
\end{equation}
where the maximum is taken over all~${\theta\in[0,2\pi]}$ and the last inequality holds because of~\eqref{EQ_STEP2}.

Notice that $\log|\tilde g'_t|$ is harmonic conjugate to $\arg\tilde g'_t$. Denote the modulus of continuity of $\theta\mapsto \log|\tilde g'_t(e^{i\theta})|$ by~$\omega_0^*$.  Using the well-known inequality due to Zygmund~\cite{Zygmund}, see e.g. \cite[Theorem~1.3  in Chap.\,III]{Garnett}, and Lemma~\ref{LM_curvature}, we obtain
\begin{eqnarray}
\nonumber
\omega_0^*(\delta)&\le&%
A\Big(\int_0^\delta\!\frac{\omega_0(x)}{x}\,\di x+\delta\int_\delta^\pi\!\frac{\omega_0(x)}{x^2}\,\di x\Big)%
\\&\le&\nonumber%
A\Big(1+ \kappa_0(q,t)M_{10}(q)e^{-t}\Big)\delta\big(1+\log(\pi/\delta)\big)%
\\\label{EQ_Zygmund}
&\le&%
A\Big(1+ \big(M_{5}(q)+M_{6}(q)\big)M_{10}(q)\Big)\delta\big(1+\log(\pi/\delta)\big), ~ 0\le\delta\le\pi,
\end{eqnarray}
where $A$ is an absolute constant.

Denote by $\omega_1$ and $\omega_2$ the continuity moduli of $\theta\mapsto\log\Re p(e^{i\theta},t)$ and  $\theta\mapsto\Re p(e^{i\theta},t)$, respectively. Then for a fixed $t>t_1$, with the help of~\eqref{EQ_STEP3}, \eqref{EQ_step4b}, and~\eqref{EQ_Zygmund},  we have
\begin{multline}\label{EQ_omega2}
\omega_2(\delta)\le\max_{\theta\in\Real}\Re p(e^{i\theta})\,\omega_1(\delta)\le\\\le M_{11}(q)\big(M_{14}(q)\delta+\omega_0^*(\delta)\big)\le M_{15}(q)\delta + M_{16}\delta\log(\pi/\delta).
\end{multline}
for some constants $M_{15}(q)$ and $M_{16}(q)$ depending only on~$q$.

\STEP5 Finally we can estimate $\Im p$. Again we fix an arbitrary $t>t_1$. According to~\eqref{EQ_omega2},  $\Re p(\cdot,t)$ is Dini-continuous on~$\partial\Delta$. Therefore, by Zygmund's result mentioned above, $\Im p(\cdot,t)$ is continuous up to the boundary and on~$\partial\Delta$ it can be written via a suitable version of the Hilbert transform. It follows that for all~$\theta\in[0,2\pi]$,
\begin{eqnarray*}
\big|\Im p(e^{i\theta},t)\big|&=&\frac1{2\pi}\left|\int_{0}^{\pi}\frac{\Re p(e^{i(\theta+x)},t)-\Re p(e^{i(\theta-x)},t)}{x}\frac{x}{\tg(x/2)}\,\di x\right|
\\[1ex]
&\le&\frac{1}{2\pi}\int_{0}^{\pi}\frac{\omega_2(2x)}{x}\frac{x}{\tg(x/2)}\,\di x
~\le~ \frac{2}{\pi}\int_0^{\pi}\frac{\omega_2(x)}{x}\,\di x,
\end{eqnarray*}
where we took into account that by Proposition~\ref{PR_C2-homotopy}, the limit $\lim_{s\to t}\big(\tilde g_s-\tilde g_t\big)/(s-t)$ is attained locally uniformly in~$\Delta\setminus\{\infty\}$ and that $\tilde g_t'(\infty)>0$ for all~$t\ge t_1$, from which it follows that $\Im p(\infty,t)=0$.

Form the above inequality and~\eqref{EQ_omega2}, we finally obtain
\begin{equation}\label{EQ_STEP5}
\big|\Im p(e^{i\theta},t)\big|\le \frac{2}{\pi}M_{15}(q)+2M_{16}(q).
\end{equation}
Recalling~\eqref{EQ_STEP3} and~\eqref{EQ_STEP3a}, we see that there exists $k_1(q)\in(0,1)$ depending only on ${q\in(0,1/3)}$ such that for any~$t>t_1$, $p(\Delta,t)\subset U\big(k_1(q)\big)$, where the disk $U(\cdot)$ is defined in Becker's Theorem~\ref{TH_Becker}.
In particular, it follows that $t\mapsto  -\log \tilde g'_t(\infty)$ is differentiable and its derivative is bounded in~$(t_1,+\infty)$ from above and below by two positive constants (depending only on~$q$). Therefore, taking into account that by construction $\tilde g_{t_1}(\Delta)=g_{t_1}$ with $\tilde g_t(\Delta)\to g_{t_1}(\Delta)$ in Carath\'eodory's sense as $t\to t_1+0$, and setting $\tilde g_t:=g_t$ for all~$t\in[0,t_1)$, we obtain a Loewner chain~$(\tilde g_t)$ in~$\Delta$.

By construction,  $F(T(0))=0\not\in\tilde g_t(\Delta)$ for all~$t\ge0$. Hence, $(f_t)$ defined by $$f_t(\zeta):=1/\tilde g_t(1/\zeta),\quad \zeta\in\UD,~t\ge0,$$ is a Loewner chain in~$\UD$. The Herglotz function of~$(f_t)$ is simply $p_\UD(\zeta,t)=p(1/\zeta,t)$ for all $t\ge0$, $t\neq t_1$, and all~$\zeta\in\UD$. It follows that $(f_t)$ satisfies Becker's condition~\eqref{EQ_Beckers-condition} with $k:=k_1(q)$ for all~$t>t_1$ and with~$k:=3q$ for all $t\in[0,t_1)$.
Finally notice that $f_0(\zeta)=1/\tilde g_0(1/\zeta)=1/g_0(1/\zeta)=f(\zeta)$. Thus, $f$ admits a Becker $k_0(q)$-q.c. extension to~$\Complex$, where $k_0(q):=\max\{k_1(q),\,3q\}$. The proof is complete. \qed


\begin{thebibliography}{99}
\bibitem{Ahlfors74} L.V.\,Ahlfors, Sufficient conditions for quasiconformal extension, in {\it Discontinuous groups and Riemann surfaces (Proc. Conf., Univ. Maryland, College Park, Md., 1973)}, 23--29. Ann. of Math. Studies, 79, Princeton Univ. Press, Princeton, NJ. MR0374415

\bibitem{Ahlfors_lect}L.V.\,Ahlfors, {\it Lectures on quasiconformal mappings}, second edition, University Lecture Series, 38, American Mathematical Society, Providence, RI, 2006. MR2241787

\bibitem{AW62} L. Ahlfors\ and\ G. Weill, \emph{A uniqueness theorem for Beltrami equations}, Proc. Amer. Math. Soc. {\bf 13} (1962), 975--978. MR0148896


\bibitem{Becker72} J. Becker, \emph{L\"{o}wnersche Differentialgleichung und quasikonform fortsetzbare schlichte Funktionen}, J. Reine Angew. Math. {\bf 255} (1972), 23--43. MR0299780

\bibitem{Becker76} J.~Becker, \emph{\"{U}ber die {L}\"osungsstruktur einer {D}ifferentialgleichung
  in der konformen {A}bbildung}, J. Reine Angew. Math. \textbf{285} (1976),
  66--74.

\bibitem{Becker80} J. Becker, Conformal mappings with quasiconformal extensions, in {\it Aspects of contemporary complex analysis (Proc. NATO Adv. Study Inst., Univ. Durham, Durham, 1979)}, 37--77, Academic Press, London. MR0623464








\bibitem{duality} M.\,D. Contreras, S. D\'{\i}az-Madrigal\ and\ P. Gumenyuk, \emph{Local duality in Loewner equations,} J. Nonlinear Convex Anal. {\bf 15} (2014), no.~2, 269--297. MR3184323

\bibitem{Duren} P. L. Duren, {\it Univalent functions}, Grundlehren der Mathematischen Wissenschaften, 259, Springer-Verlag, New York, 1983. MR0708494

\bibitem{KariHag} F.W.\,Gehring\ and\ K. Hag, {\it The ubiquitous quasidisk}, Mathematical Surveys and Monographs, 184, American Mathematical Society, Providence, RI, 2012. MR2933660

\bibitem{Garnett} J.B.\,Garnett, {\it Bounded analytic functions}, revised first edition, Graduate Texts in Mathematics, 236, Springer, New York, 2007. MR2261424

\bibitem{Gokturk} Z. G\"{o}kt\"{u}rk, \emph{Estimates for univalent functions with quasiconformal extensions,} Ann. Acad. Sci. Fenn. Ser. A. I. {\bf 1974}, no.~589, 21 pp. MR0463420

\bibitem{Goluzin_est} G.\,M. Golusin, \emph{Zur Theorie der schlichten Funktionen,} Rec. Math. [Mat. Sbornik] N. S. {\bf 12(54)} (1943), 48--55. MR0009053

\bibitem{Goluzin} G.\,M. Goluzin, {\it Geometric theory of functions of a complex variable}, Amer. Math. Soc., Providence, R.I.,1969. MR0247039 (Translated from G. M. Goluzin, {\it Geometrical theory of functions of a complex variable} (Russian), Second edition, Izdat. ``Nauka'', Moscow, 1966.)

\bibitem{HottaGum::QC-chordal} P. Gumenyuk\ and\ I. Hotta, \emph{Chordal Loewner chains with quasiconformal extensions},  Math. Z. {\bf 285} (2017), no.~3-4, 1063--1089. MR3623740

\bibitem{PAMS2020} P. Gumenyuk\ and\ I. Hotta, \emph{Univalent functions with quasiconformal extensions: Becker's class and estimates of the third coefficient}, Proc. Amer. Math. Soc., to appear. DOI~10.1090/proc/15010

\bibitem{Istvan}P. Gumenyuk\ and\ I. Prause, \emph{Quasiconformal extensions, Loewner chains, and the $\lambda$-lemma,} Anal. Math. Phys. {\bf 8} (2018), no.~4, 621--635. MR3881017

\bibitem{Gut}V. Ja. Gutljanski\u{\i}, \emph{Parametric representation of univalent functions}, Dokl. Akad. Nauk SSSR {\bf 194} (1970), 750--753. MR0271324; English transl. in Soviet Math. Dokl. {\bf 11} (1970), 1273--1276.

\bibitem{Gut1978} V. Ja. Gutljanski\u{\i}, \emph{Distortion theorems for univalent analytic functions with a quasiconformal extension,} Dokl. Akad. Nauk SSSR {\bf 240} (1978), no.~3, 515--517. MR0492239



\bibitem{Ikkei09} I. Hotta, \emph{Explicit quasiconformal extensions and L\"{o}wner chains,} Proc. Japan Acad. Ser. A Math. Sci. {\bf 85} (2009), no.~8, 108--111. MR2561899

\bibitem{Ikkei11} I. Hotta, \emph{L\"{o}wner chains with complex leading coefficient}, Monatsh. Math. {\bf 163} (2011), no.~3, 315--325. MR2805876

\bibitem{Ikkei2} I. Hotta, \emph{Loewner chains with quasiconformal extensions: an approximation approach,} J. Anal. Math., to appear (arXiv:1605.07839).


\bibitem{Krushkal-handbook} S.L.\,Krushkal, Univalent holomorphic functions with quasiconformal extensions (variational approach), in {\it Handbook of complex analysis: geometric function theory. Vol. 2}, 165--241, Elsevier Sci. B. V., Amsterdam. MR2121860





\bibitem{Kuhnau69} R. K\"{u}hnau, \emph{Wertannahmeprobleme bei quasikonformen Abbildungen mit ortsabh\"{a}ngiger Dilatationsbeschr\"{a}nkung}, Math. Nachr. {\bf 40} (1969), 1--11. MR0249610

\bibitem{Kuhnau1975} R. K\"{u}hnau, \emph{Eine Versch\"{a}rfung des Koebeschen Viertelsatzes f\"{u}r quasikonform fortsetzbare Abbildungen,} Ann. Acad. Sci. Fenn. Ser. A I Math. {\bf 1} (1975), no.~1, 77--83. MR0407276

\bibitem{KuhnauNiske} R. K\"{u}hnau\ and\ W. Niske, \emph{Absch\"{a}tzung des dritten Koeffizienten bei den quasikonform fortsetzbaren schlichten Funktionen der Klasse $S$}, Math. Nachr. {\bf 78} (1977), 185--192. MR0470205

\bibitem{Lawrynowicz}J.\,\L. awrynowicz, {\it Quasiconformal mappings in the plane}, Lecture Notes in Mathematics, 978, Springer-Verlag, Berlin, 1983. MR0702025


\bibitem{Lehto_Majorant} O. Lehto, \emph{On univalent functions with quasiconformal extensions over the boundary}, J. Analyse Math. {\bf 30} (1976), 349--354. MR0466544

\bibitem{Lehto_book} O. Lehto, {\it Univalent functions and Teichm\"{u}ller spaces}, Graduate Texts in Mathematics, 109, Springer-Verlag, New York, 1987. MR0867407

\bibitem{Loewner} K. L\"{o}wner, Untersuchungen \"{u}ber schlichte konforme Abbildungen des Einheitskreises. I, Math. Ann. {\bf 89} (1923), no.~1-2, 103--121. MR1512136

\bibitem{Pommerenke65} Ch. Pommerenke, \emph{\"{U}ber die Subordination analytischer Funktionen}, J. Reine Angew. Math. {\bf 218} (1965), 159--173. MR0180669

\bibitem{Pommerenke} Ch. Pommerenke, {\it Univalent functions. With a chapter on quadratic differentials by Gerd Jensen}, Vandenhoeck \& Ruprecht, G\"{o}ttingen, 1975.

\bibitem{Pommerenke86} Ch. Pommerenke, \emph{On the Epstein univalence criterion}, Results Math.  {\bf 10} (1986), no.~1-2, 143--146. MR0869806

\bibitem{Pommerenke:BB} Ch. Pommerenke, {\it Boundary behaviour of conformal maps}, Grundlehren der Mathematischen Wissenschaften, 299, Springer-Verlag, Berlin, 1992. MR1217706



\bibitem{Range1976} R. M. Range, \emph{On a Lipschitz estimate for conformal maps in the plane,} Proc. Amer. Math. Soc. {\bf 58} (1976), 375--376. MR0407254

\bibitem{Ransford} T. Ransford, {\it Potential theory in the complex plane}, London Mathematical Society Student Texts, 28, Cambridge University Press, Cambridge, 1995. MR1334766


\bibitem{OE} O. Roth\ and\ E. Schippers, \emph{The Loewner and Hadamard variations}, Illinois J. Math. {\bf 52} (2008), no.~4, 1399--1415. MR2595774

\bibitem{Schippers2005} E. Schippers, Behaviour of kernel functions under homotopic variations of planar domains, Comput. Methods Funct. Theory {\bf 4} (2004), no.~2, 283--298. MR2147386



\bibitem{Takhtajan} L.A.\,Takhtajan\ and\ L.-P. Teo, \emph{Weil-Petersson metric on the universal Teichm\"{u}ller space}, Mem. Amer. Math. Soc. {\bf 183} (2006), no.~861. MR2251887

\bibitem{Warschawski1961}S.E.\,Warschawski, \emph{On differentiability at the boundary in conformal mapping}, Proc. Amer. Math. Soc. {\bf 12} (1961), 614--620. MR0131524

\bibitem{Zygmund} A. Zygmund, \emph{On the continuity modulus of the sum of the series conjugate to a Fourier series. (O module ciag{\l}o\'{s}ci sumy szeregu sprz\c{e}\.{z}onego z szeregiem Fouriera),}  Prace mat.-fiz. \textbf{33} (1924), 125--132. JFM~52.0274.03


\end{thebibliography}
\end{document}